\documentclass[11pt,a4paper]{article}
\usepackage{caption}
\usepackage{epsf,epsfig,amsfonts,amsgen,amsmath,amstext,amsbsy,amsopn,amsthm}
\usepackage{color}
\usepackage{mathrsfs}
\usepackage{tikz}
\usepackage{enumerate}
\usepackage{url}
\usepackage{enumitem}
\usepackage{lineno}
\usepackage{latexsym,amsmath,amssymb,amsfonts,epsfig,graphicx,cite,psfrag}
\usepackage{eepic,color,colordvi,amscd}

\addtolength{\topmargin}{-.7in} \addtolength{\textheight}{1.5in}
\addtolength{\oddsidemargin}{-.8in}
\addtolength{\evensidemargin}{-.8in} \addtolength{\textwidth}{1.7in}

\newtheorem{theorem}{Theorem}[section]
\newtheorem{corollary}[theorem]{Corollary}

\newtheorem{conjecture}[theorem]{Conjecture}

\newtheorem{lemma}[theorem]{Lemma}
\newtheorem{proposition}[theorem]{Proposition}

\newtheorem{claim}{Claim}

\newcommand{\1}{{\uppercase\expandafter{\romannumeral1}}}
\newcommand{\2}{{\uppercase\expandafter{\romannumeral2}}}
\newcommand{\3}{{\uppercase\expandafter{\romannumeral3}}}
\newcommand{\4}{{\uppercase\expandafter{\romannumeral4}}}

\begin{document}
\title{A clique version of the Erd\H{o}s-Gallai stability theorems}

\author{
Jie Ma\thanks{School of Mathematical Sciences, University of Science and Technology of China, Hefei 230026, China.
Email: jiema@ustc.edu.cn. Supported in part by National Natural Science Foundation of China grant 11622110,
the project ``Analysis and Geometry on Bundles'' of Ministry of Science and Technology of the People's Republic of China,
and Anhui Initiative in Quantum Information Technologies grant AHY150200.}
\and
Long-Tu Yuan\thanks{School of Mathematical Sciences, East China Normal University, 500 Dongchuan Road, Shanghai 200240, China.
Email: ltyuan@math.ecnu.edu.cn. Supported in part by National Natural Science Foundation of China grant 11901554.}
}

\date{}
\maketitle
\begin{abstract}
Combining P\'{o}sa's rotation lemma with a technique of Kopylov in a novel approach,
we prove a generalization of the Erd\H{o}s-Gallai theorems on cycles and paths.
This implies a clique version of the Erd\H{o}s-Gallai stability theorems and also provides alternative proofs for some recent results.
\end{abstract}



\section{Introduction}
\noindent
The {\it circumference} $c(G)$ of a graph $G$ is the length of a longest cycle in $G$.
For $s\geq 2$, let $N_s(G)$ denote the number of unlabeled copies of the clique $K_s$ in $G$.
For integers $n\geq k\geq 2a$, let $H(n,k,a)$ be the $n$-vertex graph whose vertex set is partitioned into three sets $A, B, C $ such that $|A|=a, |B|=n-k+a$ and $|C|=k-2a$
and the edge set consists of all edges between $A$ and $B$ together with all edges in $A\cup C$.
Let $h_s(n,k,a)=N_s(H(n,k,a))$.

The celebrated Erd\H{o}s-Gallai theorem \cite{erdHos1959maximal} states that any $n$-vertex graph $G$ with $c(G)<k$ has at most $\frac{k-1}{2}(n-1)$ edges.
This was improved by Kopylov \cite{Kopylov1977} by showing that any $n$-vertex $2$-connected graph $G$ with $c(G)<k$ has at most $\max\{h_2(n,k,2)), h_2(n,k,\lfloor\frac{k-1}{2}\rfloor)\}$ edges.
Combined with the results in \cite{Furedi2016}, F\"{u}redi, Kostochka, Luo and Verstra\"{e}te \cite{Furedi2018} proved a stability version of Kopylov's theorem,
which says that for any 2-connected graph $G$ with $c(G)<k$,
if $e(G)$ is close to the above maximum number from Kopylov's theorem, then $G$ must be a subgraph of some well-specified graphs.
This was further extended in \cite{MN2019} to stability results of $2$-connected graphs of any given minimum degree.
On the other hand, Luo \cite{Luo2018} generalized Kopylov's theorem by showing that
the number of $s$-cliques in any $n$-vertex 2-connected graph $G$ with $c(G)<k$
is at most $\max\{h_s(n,k,2),h_s(n,k,\lfloor\frac{k-1}{2}\rfloor)\}$.

The aim of this paper is to study a new approach and provide some potential tools in this line of research.
Following this approach, our main result, Theorem~\ref{Theorem 2-connected}, considers a general stability setting.
To get into the statement, it requires an entangled family of some specified graphs which we will define in Section 2.
However, we would like to point out that using Theorem \ref{Theorem 2-connected}, one can not only derive alternate proofs of many recent results in \cite{Furedi2016,Furedi2018,MN2019},
but also infer some new results.
One such new result is the following stability result of Luo's theorem \cite{Luo2018} on the number of $s$-cliques,
which also can be viewed as a clique version of one of the main results in \cite{Furedi2016} (see Theorem 1.4 therein).

\begin{theorem}\label{corollary 2-connected large n}
Let $k\geq 5$, $\ell=\lfloor (k-1)/2\rfloor$, $2\leq s\leq \max\{2, \ell-1\}$ and $n\geq n_0(\ell)$ be integers.\footnote{We remark that we do not pursue a precise expression on the constant $n_0(\ell)$, which needs to be quite large in the proof. Instead, we pose a conjecture in the end of this paper which would help extend Theorem~\ref{corollary 2-connected large n} to all values of $n$.}
Let $G$ be an $n$-vertex $2$-connected graph with $c(G)<k$.
Then $N_s(G)\leq h_s(n,k,\ell-1)$ unless
\begin{itemize}
\item [(a)] $s=3$ and $k\in \{9,10\}$,
\item [(b)] $k=2\ell+1$, $k\neq 7$, and $G\subseteq H(n,k,\ell)$, or
\item [(c)] $k=2\ell+2$ or $k=7$, and $G-A$ is a star forest for some $A\subseteq V(G)$ of size at most $\ell$.\footnote{A {\it star forest} is a graph in which every component is a star.}
\end{itemize}
\end{theorem}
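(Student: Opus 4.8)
The plan is to deduce Theorem~\ref{corollary 2-connected large n} from our general stability result Theorem~\ref{Theorem 2-connected}. Applying the latter to the given $n$-vertex $2$-connected graph $G$ with $c(G)<k$, we obtain that either $N_s(G)\le h_s(n,k,\ell-1)$, in which case there is nothing to prove, or $G$ is a subgraph of one of the members of the entangled family of extremal-type graphs attached to $(k,s)$. So the whole task is to inspect that family and show that, for $n\ge n_0(\ell)$, each of its members either already satisfies the bound $h_s(n,k,\ell-1)$ --- whence so does $G$ --- or places $G$ into one of the exceptions (a)--(c).

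The single computation I rely on throughout is the exact count
\[
  h_s(n,k,a)=\binom{k-a}{s}+(n-k+a)\binom{a}{s-1}\qquad (2\le a\le \lfloor k/2\rfloor),
\]
obtained by splitting a $K_s$ in $H(n,k,a)$ according to whether it lies inside the clique on $A\cup C$ or uses the unique vertex of $B$ it can contain, together with the elementary observation that for fixed $s$ the linear coefficient $\binom{a}{s-1}$ is strictly increasing in $a$ for $a\ge s-1$. Hence, for $n$ large, among $H(n,k,2),\dots,H(n,k,\ell-1)$ the one richest in copies of $K_s$ is $H(n,k,\ell-1)$, while $h_s(n,k,\ell)>h_s(n,k,\ell-1)$. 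Therefore: if $G\subseteq H(n,k,a)$ for some $2\le a\le \ell-1$, then $N_s(G)\le h_s(n,k,\ell-1)$ and we are done; if $G\subseteq H(n,k,\ell)$, then since $k-2\ell\in\{1,2\}$ we land in case (b) when $k$ is odd and $k\ne 7$, and in case (c) when $k$ is even or $k=7$ (take $A$ to be the clique of size $\ell$ inside $H(n,k,\ell)$, so that $G-A$ is a star forest); and if $G$ minus some set of at most $\ell$ vertices is a star forest, we are again in case (c). It remains to show that no other member of the entangled family can survive the comparison with $h_s(n,k,\ell-1)$.

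For this I would split according to the size of $\ell$. When $\ell$ exceeds an absolute constant, every member of the entangled family other than the subgraphs of $H(n,k,\ell)$ and the star-forest-type graphs has at most $h_s(n,k,\ell-1)$ copies of $K_s$ once $n\ge n_0(\ell)$, so the exceptions reduce precisely to (b) (odd $k$) and (c) (even $k$), with (a) vacuous. The genuine work lies in the finite range $k\in\{5,\dots,10\}$ (equivalently $\ell\le 4$): there the slack between $h_s(n,k,\ell-1)$ and the next configurations is only a constant, or the competitors are no longer of the clean form $H(n,k,a)$ but instead look like, say, a clique of size $\ell$ joined to a star forest carrying a genuine star edge --- a shape which, it turns out, the hypothesis $c(G)<k$ permits only for small $k$. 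Here one lists the relevant members by hand, writes their $K_s$-counts as explicit polynomials in $n$, and compares them with $h_s(n,k,\ell-1)$ coefficient by coefficient; this is exactly what makes $k=7$ behave like the even case (so that $H(n,7,\ell)$ and its star-forest relatives belong in (c) rather than (b)) and what produces, for $s=3$ and $k\in\{9,10\}$, a single extra extremal-type configuration, i.e.\ the sporadic exception (a). One then fixes $n_0(\ell)$ large enough to validate every polynomial comparison used above.

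The main obstacle is exactly this small-$k$ bookkeeping. For $k\le 10$ the asymptotics are too coarse to decide matters, so one cannot avoid pinning down every borderline graph in the entangled family and counting its copies of $K_s$ precisely; this is the regime in which a star edge inside $G-A$ becomes cheap enough to push the $K_s$-count past $h_s(n,k,\ell-1)$, and it is where the distinctions between (a), (b) and the anomalous placement of $k=7$ in (c) originate. A secondary and more routine point to verify is that the star-forest-type graphs are the only extremal-type graphs for even $k$ (and for $k=7$) beyond the subgraphs of $H(n,k,\ell)$, so that the list (a)--(c) is genuinely exhaustive.
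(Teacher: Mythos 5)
Your high-level plan---apply Theorem~\ref{Theorem 2-connected} and then analyze cases---matches the paper's intent, but the proposal rests on a misreading of what Theorem~\ref{Theorem 2-connected} actually delivers. It does not produce a classification of $G$ as a subgraph of some extremal graph. Its conclusion is the disjunction ``$\omega(G)>k-\beta$ or $|V(H(G,\ell-1))|<k-\ell+\alpha$,'' under the hypothesis that $G$ is maximal $\mathcal{K}_{k,\alpha}$-free; in the deduction of Theorem~\ref{corollary 2-connected large n} both alternatives are ruled out, so what one really extracts is that $G$ \emph{contains} some $F\in\mathcal{K}_{k,1}$ as a subgraph. Each such $F$ lives on $m$ vertices with $m\geq k$ but $m$ bounded, while $N_s(G)$ can be linear in $n$. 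Consequently, your plan of ``inspecting the family and showing each member either satisfies $h_s(n,k,\ell-1)$ or places $G$ into (a)--(c)'' does not get off the ground: the $K_s$-count of a bounded-size $F$ tells you nothing about $N_s(G)$ without further argument. The paper bridges this via Proposition~\ref{property for H}$(\romannumeral4)$: because $G$ is $2$-connected with $c(G)<k$, the presence of $F$ forces $G-V(F)$ to be a star forest attached only to the set $C$, and from this $G-X$ is a star forest (an independent set if $k$ is odd) for a suitable $X\subseteq V(F)$ of size $\leq\ell$. That is the step that actually produces (b) and (c).

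The second and more substantial omission is the treatment of $F\in\mathcal{F}(m,k,r)$ with $r\in\{\ell-1,\ell\}$. For $\ell\leq 3$ the paper argues structurally; for $\ell=4,\ s=3$ it records the sporadic exception (a); and for $\ell\geq 5$ (or $\ell\geq 4$ and $s\neq 3$) it derives a \emph{contradiction}, showing this case cannot occur. The contradiction comes from a genuinely new ingredient not present in your sketch: $F$ contains an $(\ell-1)$-clique whose vertices are adjacent (in $G$) to only two further vertices, so deleting it removes at most $\binom{\ell+1}{s}-\binom{2}{s}$ copies of $K_s$; combined with the inequality $(\ell-1)\binom{\ell-1}{s-1}\geq\binom{\ell+1}{s}-\binom{2}{s}$ one gets $N_s(G_1)>h_s(n-\ell+1,k,\ell-1)$ for the $2$-connected residual graph $G_1$, and iterating a large but bounded number of times yields $N_s(G_t)>N_s(K_{n_0+\ell-1})$, which is absurd. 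Without this deletion-and-iteration argument, your ``list the members by hand and compare polynomials'' plan has no mechanism for ruling out these configurations, and it is precisely this argument that delineates exception (a) and forces $k=7$ into case (c) rather than (b). A correct proof must supply both the subgraph-to-global-structure bridge and the iteration argument; neither appears in the proposal.
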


We would like to mention that Theorem~\ref{Theorem 2-connected} can in fact imply a refined version of Theorem~\ref{corollary 2-connected large n}.




The organization of this paper is as follows.
In Section~\ref{notation}, we give a formal definition of a family of graphs for the use of our characterization.
In Section~\ref{sec posa lemma}, we prove the key lemma for our proofs which builds on an integration of P\'{o}sa's rotation lemma and Kopylov's proof in \cite{Kopylov1977}.
In Section~\ref{main result}, we prove our main result Theorem~\ref{Theorem 2-connected}.
In Section~\ref{corollary}, we show how to use Theorem~\ref{Theorem 2-connected} to deduce Theorem~\ref{corollary 2-connected large n} as well as some main results in \cite{Furedi2016,Furedi2018,MN2019}.
We also pose a related conjecture for further research to conclude this paper.
For the proof ideas, we would like to suggest readers to first look through the odd $k$ case, in which the family defined in Section~\ref{notation} as well as the proofs will be much more concise.
Throughout the rest of the paper, let $k\geq 5$ be an integer and $\ell=\lfloor(k-1)/2\rfloor$.

\section{Notation and a family of graphs}\label{notation}

\subsection{Notation}
The general notation used in this paper is standard (see, e.g., \cite{bollobas1978}).
For a graph $G$, let $\omega(G)$ be the order of a maximum clique in $G$.
For disjoint subsets $A, B\subseteq V(G)$, we denote $G(A,B)$ to be the induced bipartite subgraph of $G$ with parts $A,B$. 
Let $E(A,B)=E(G(A,B))$ for short.
When defining a graph, we will only specify these adjacent pairs of vertices, that says,
if a pair $\{a,b\}$ is not discussed as a possible edge, then it is assumed to be a non-edge.

Denote by $N_G(x)$ the set of neighbors of $x$ in $G$ and let $d_G(x)$ be the size of $N_G(x)$.
For $U\subseteq V(G)$, let $N_U(x)=N_G(x)\cap U$ and $d_U(x)=|N_U(x)|$.
Let $P=x_1x_2\cdots x_m$ be a path in $G$.
For $x\in V(G)$, let $N_P(x)=N_G(x)\cap V(P)$ and $N_P[x]=N_P(x)\cup \{x\}$, with $d_P(x):=|N_P(x)|$.
For $x_i,x_j\in V(P)$, we use $x_iPx_j$ to denote the subpath of $P$ between $x_i$ and $x_j$.
For $x\in V(P)$, denote $x^-$ and $x^+$ to be the immediate predecessor and immediate successor of $x$ on $P$, respectively.
For $S \subseteq V (P)$, let $S^+=\{x^+:x\in S\}$ and $S^-=\{x^-:x\in S\}$.
We call $(x_i,x_j)_P$ a {\it crossing pair} of $P$ if $i<j$, $x_i\in N_P(x_m)$ and $x_j\in N_P(x_1)$.
If there is no ambiguity, we write this pair as $(i,j)$ for short.
We call a path a {\it crossing path} if it has a crossing pair.
Let $j-i-1$ be the {\it length} of the crossing pair $(i,j)$.
A crossing pair $(i,j)$ is {\it minimal} in $P$ if $x_h\notin N_P(x_1)\cup N_P(x_m)$ for each $i<h<j$.
For $S\subseteq V(G)$, we call $P$ an $S${\it-path} if $x_1,x_m\in S$.

For an integer $\alpha$ and a graph $G$, the {\it $\alpha$-disintegration} of $G$,
denoted by $H(G,\alpha)$,
is the graph obtained from $G$ by recursively deleting vertices of degree at most $\alpha$ until that the resulting graph has no such vertex.\footnote{One can see that $H(G,\alpha)$ is unique in $G$ and has minimum degree at least $\alpha+1$ (if non-empty).}

\subsection{A family of graphs}

Let $m\geq k\geq 5$ and $1\leq r\leq \ell$ be integers.
We now devote the rest of this subsection to the definition of a family $\mathcal{F}(m,k,r)$ of some delicate graphs $F$,
each of which has a Hamilton path
and satisfies $|V(F)|=m$ and $c(F)<k$.\footnote{For the parameter $r$, roughly speaking we may view it as something close to $\omega(F)$, though its own meaning will be clear in the proof of Lemma~\ref{extend posa lemma}. Readers may treat the coming lengthy definition as a handout and skip to next sections.}
We divide $\mathcal{F}(m,k,r)$ into the following four classes, namely Types \1, \2, \3 and \4.
Along the way, we also define some very special graphs (see Figure 1).

\medskip

\noindent{\bf Type \1:} Let $k=2\ell+1$ be odd and $r\leq \ell-1$. Each graph $F\in \mathcal{F}(m,k,r)$ of Type \1 satisfies:

\begin{itemize}
  \item $V(F)=A\cup B\cup C\cup D$, 
  \item $F[A]$ and $F[B]$ are cliques on $r$ vertices,
  \item $F[C]$ is empty with $|C|=\ell-r+1$,
  \item $F[D]$ is empty when $|C|\geq 3$, and $F[D]$ is  a path when $|C|=2$,\footnote{An isolated vertex will also be viewed as a (trivial) path in this paper.}
  \item each vertex in $A\cup B$ is adjacent to each vertex in $C$, and
  \item $F[C\cup D]$ is a $C$-path.
\end{itemize}

\noindent{\bf Type \2:} Let $k=2\ell+2$ be even and $r\leq \ell-1$. Each graph $F\in \mathcal{F}(m,k,r)$ of Type \2 satisfies:
\begin{itemize}
  \item $V(F)=A\cup B\cup C\cup D$, 
  \item $|A|\in\{r,r+1\}$ and $F[B]$ is a clique on $r$ vertices,
  \item $F[C]$ is empty with $|C|=\ell-r+1$,
  \item $F[D]$ is a path when $|C|=2$, and $F[D]$ consists of at most two independent edges and some isolated vertices when $|C|\geq 3$  such that one of the following holds:
  \begin{itemize}
  \item $F[D]$ is empty when $|A|=r+1$,
  \item $F[D]$ contains a unique edge when $|A|=r$, or
  \item $F[D]$ consists of two independent edges when $|A|=r=\ell-2$.
\end{itemize}
 \item each vertex in $A$ has degree exactly $\ell$ in $F[A\cup C]$\footnote{Note that if $r=1$ and $|A|=2$, then $F[A]=K_2$ (by the fact that $F$ contains a Hamilton path).} and each vertex in $B$ has degree exactly $\ell$ in $F[B\cup C]$, and
  \item $F[C\cup D]$ is a $C$-path satisfying that if $|A|=r+1$ then each end-vertex of $F[C\cup D]$ is adjacent to some vertex of $A$.
   In particular, we denote the graph with $|A|=r=\ell-2$ and $|D|=3$ by $F_0(m,k,r)$, the graph with $|A|=r=\ell-2$ and $|D|=4$ by $F_4(m,k,r)$, and the graph with $F[A]$ being a star on three vertices by $F_5(m,k,2)$.
  \end{itemize}

\noindent{\bf Type \3:} Let $k=2\ell+2$ be even and $r\leq \ell-1$. Each graph $F\in \mathcal{F}(m,k,r)$ of Type \3 satisfies:
\begin{itemize}
  \item $V(F)=A\cup B\cup C\cup D$,
  \item $F[A]$ and $F[B]$ are cliques on $r$ vertices,
  \item $F[C]$ is empty with $|C|=\ell-r+1$,
  \item $F[D]$ is empty when $|C|\geq 3$, and $F[D]$ consists of a path and an isolated vertex when $|C|=2$,
  \item each vertex in $A\cup B$ is adjacent to each vertex in $C$, and
  \item $F[C\cup D]$ consists of at most two vertex-disjoint paths such that one of the following holds:
 \begin{itemize}
   \item $F[C\cup D]$ consists of a $C$-path and an isolated vertex $x\in D$ such that
    $x$ is adjacent to exactly two vertices $x_1,x_2$ of $A$ (denote this graph by $F_1(m,k,r)$),
   \item $F[C\cup D]$  is a path with the end-vertex $y\in D$ such that $y$ is an isolated vertex in $F[D]$ and is adjacent to exactly one vertex $y_1$ in $A$ (denote this graph by $F_2(m,k,r)$), or
\item $F[C\cup D]$ consists of a path with distinct end-vertices $z,z^\prime \in D$ and a path with end-vertices in $C$ satisfying that $|D|=\ell-r+1$ and $z, z'$ is adjacent to exactly one vertex $z_1, z_1^\prime$ in $A$, respectively, with $z_1\neq z_1^\prime$ (denote this family of graphs by $\mathcal{F}_3(m,k,r)$). 
  \end{itemize}
\end{itemize}

\noindent{\bf Type \4:} Let $k=2\ell+2$ be even and $r=\ell$. Each graph $F\in \mathcal{F}(m,k,r)$ of Type \4 satisfies:

\begin{itemize}
\item $V(F)=A\cup B\cup C$, 
\item $F[A]$ and $F[B]$ are cliques on $\ell-1$ vertices, and
\item $F[C]$ induces a cycle with three distinct vertices $w_1, w_2, w$ such that $w_1w_2\in E(F[C])$, $ww_i\notin E(F[C])$ for $i\in \{1,2\}$, $w_1$ is adjacent to each vertex of $A$, $w_2$ is adjacent to each vertex of $B$, and $w$ is adjacent to each vertex of $A\cup B$.
\end{itemize}

\begin{center}
\begin{tikzpicture}[scale = 0.5]
\filldraw[fill=black] (0,0) circle(2pt);
\filldraw[fill=black] (0.5,-2) circle(2pt);
\filldraw[fill=black] (-0.5,-2) circle(2pt);
\filldraw[fill=black] (-1,0) circle(2pt);
\filldraw[fill=black] (1.5,-2) circle(2pt);
\filldraw[fill=black] (2,0) circle(2pt);
\filldraw[fill=black] (-2,-2) circle(2pt);
\filldraw[fill=black] (-3,-2) circle(2pt);
\filldraw[fill=black] (-4,-2) circle(2pt);
\filldraw[fill=black] (3,-2) circle(2pt);
\filldraw[fill=black] (4,-2) circle(2pt);
\filldraw[fill=black] (5,-2) circle(2pt);

\draw (0,0) -- (0.5,-2);
\draw (-1,0) -- (-0.5,-2);
\draw  (0,0) -- (-0.5,-2);
\draw  (0,0) -- (0.5,-2);
\draw  (1.5,-2) -- (0.5,-2);
\draw  (1.5,-2) -- (2,0);

\draw  (-2,-2) -- (2,0);
\draw  (-2,-2) -- (-1,0);
\draw  (-2,-2) -- (0,0);
\draw  (-3,-2) -- (2,0);
\draw  (-3,-2) -- (-1,0);
\draw  (-3,-2) -- (0,0);
\draw  (-4,-2) -- (2,0);
\draw   (-4,-2) -- (-1,0);
\draw  (-4,-2) -- (0,0);

\draw  (3,-2) -- (2,0);
\draw  (3,-2) -- (-1,0);
\draw  (3,-2) -- (0,0);
\draw  (4,-2) -- (2,0);
\draw  (4,-2) -- (-1,0);
\draw  (4,-2) -- (0,0);
\draw  (5,-2) -- (2,0);
\draw  (5,-2) -- (-1,0);
\draw  (5,-2) -- (0,0);

\draw (0.5,0) ellipse (2 and 0.5);
\draw (-3,-2) ellipse (1.5 and 0.5);
\draw (4,-2) ellipse (1.5 and 0.5);
\draw (0.5,-2) ellipse (1.5 and 0.5);

\draw  (3,-2) -- (4,-2);
\draw  (4,-2) -- (5,-2);
\draw  (3,-2)..  controls (4,-2.5).. (5,-2);
\draw  (-2,-2) -- (-3,-2);
\draw  (-4,-2) -- (-3,-2);

\draw  (-4,-2)..  controls (-3,-2.5).. (-2,-2);

\draw node at (0.5,-3){$D$};
\draw node at (-0.5,-3){$v$};
\draw node at (-3,-3){$A$};
\draw node at (3.5,-3){$B$};
\draw node at (1,1){$C$};
\draw node at (0,1){$v_2$};
\draw node at (-1,1){$v_1$};

\draw node at (0.5,-4){$F_0(12,12,3)$};

\filldraw[fill=black] (10,0) circle(2pt);
\filldraw[fill=black] (10.5,-2) circle(2pt);
\filldraw[fill=black] (9.5,-2) circle(2pt);
\filldraw[fill=black] (11.5,-2) circle(2pt);
\filldraw[fill=black] (11,0) circle(2pt);
\filldraw[fill=black] (12,0) circle(2pt);
\filldraw[fill=black] (8.5,-2) circle(2pt);
\filldraw[fill=black] (7.5,-2) circle(2pt);
\filldraw[fill=black] (6.5,-2) circle(2pt);
\filldraw[fill=black] (13,-2) circle(2pt);
\filldraw[fill=black] (14,-2) circle(2pt);
\filldraw[fill=black] (15,-2) circle(2pt);

\draw  (10,0) -- (10.5,-2);
\draw  (11.5,-2) -- (11,0);
\draw (11,0) -- (10.5,-2);
\draw  (11.5,-2) -- (12,0);

\draw (13,-2)-- (12,0);
\draw (14,-2)-- (12,0);
\draw (15,-2)-- (12,0);
\draw (13,-2)-- (11,0);
\draw (14,-2)-- (11,0);
\draw (15,-2)-- (11,0);
\draw (13,-2)-- (10,0);
\draw (14,-2)-- (10,0);
\draw (15,-2)-- (10,0);

\draw (6.5,-2)-- (12,0);
\draw (7.5,-2)-- (12,0);
\draw (8.5,-2)-- (12,0);
\draw (6.5,-2)-- (11,0);
\draw (7.5,-2)-- (11,0);
\draw (8.5,-2)-- (11,0);
\draw (6.5,-2)-- (10,0);
\draw (7.5,-2)-- (10,0);
\draw (8.5,-2)-- (10,0);

\draw (9.5,-2)-- (8.5,-2);

\draw (11,0) ellipse (1.5 and 0.5);
\draw (7.5,-2) ellipse (1.5 and 0.5);
\draw (14,-2) ellipse (1.5 and 0.5);
\draw (10.5,-2) ellipse (1.5 and 0.5);

\draw  (13,-2) -- (14,-2);
\draw  (14,-2) -- (15,-2);
\draw  (13,-2)..  controls (14,-2.5).. (15,-2);
\draw  (6.5,-2) -- (7.5,-2);
\draw  (8.5,-2) -- (7.5,-2);
\draw  (8.5,-2)..  controls (7.5,-2.5).. (6.5,-2);

\draw  (9.5,-2)..  controls (8,-3).. (6.5,-2);

\draw node at (10.5,-3){$D$};
\draw node at (9.5,-3){$x$};
\draw node at (8.5,-3){$x_1$};
\draw node at (6.5,-3){$x_2$};
\draw node at (7.5,-3){$A$};
\draw node at (14,-3){$B$};
\draw node at (11,1){$C$};

\draw node at (11,-4){$F_1(12,12,3)$};

\filldraw[fill=black] (20,0) circle(2pt);
\filldraw[fill=black] (20.5,-2) circle(2pt);
\filldraw[fill=black] (19.5,-2) circle(2pt);
\filldraw[fill=black] (21.5,-2) circle(2pt);
\filldraw[fill=black] (21,0) circle(2pt);
\filldraw[fill=black] (22,0) circle(2pt);
\filldraw[fill=black] (18.5,-2) circle(2pt);
\filldraw[fill=black] (17.5,-2) circle(2pt);
\filldraw[fill=black] (16.5,-2) circle(2pt);
\filldraw[fill=black] (23,-2) circle(2pt);
\filldraw[fill=black] (24,-2) circle(2pt);
\filldraw[fill=black] (25,-2) circle(2pt);

\draw (20,0) -- (20.5,-2);

\draw  (21.5,-2) -- (21,0);
\draw (21,0) -- (20.5,-2);
\draw  (21.5,-2) -- (22,0);

\draw (23,-2)-- (22,0);
\draw (24,-2)-- (22,0);
\draw (25,-2)-- (22,0);
\draw (23,-2)-- (21,0);
\draw (24,-2)-- (21,0);
\draw (25,-2)-- (21,0);
\draw (23,-2)-- (20,0);
\draw (24,-2)-- (20,0);
\draw (25,-2)-- (20,0);

\draw (16.5,-2)-- (22,0);
\draw (17.5,-2)-- (22,0);
\draw (18.5,-2)-- (22,0);
\draw (16.5,-2)-- (21,0);
\draw (17.5,-2)-- (21,0);
\draw (18.5,-2)-- (21,0);
\draw (16.5,-2)-- (20,0);
\draw (17.5,-2)-- (20,0);
\draw (18.5,-2)-- (20,0);

\draw (19.5,-2)-- (18.5,-2);

\draw (19.5,-2)-- (20,0);

\draw (21,0) ellipse (1.5 and 0.5);
\draw (17.5,-2) ellipse (1.5 and 0.5);
\draw (24,-2) ellipse (1.5 and 0.5);
\draw (20.5,-2) ellipse (1.5 and 0.5);

\draw  (23,-2) -- (24,-2);
\draw  (24,-2) -- (25,-2);
\draw  (23,-2)..  controls (24,-2.5).. (25,-2);
\draw  (16.5,-2) -- (17.5,-2);
\draw  (18.5,-2) -- (17.5,-2);
\draw  (18.5,-2)..  controls (17.5,-2.5).. (16.5,-2);

\draw node at (20.5,-3){$D$};

\draw node at (19.5,-3){$y$};
\draw node at (18.5,-3){$y_1$};
\draw node at (17.5,-3){$A$};
\draw node at (24,-3){$B$};
\draw node at (21,1){$C$};
\draw node at (20,1){$y_2$};

\draw node at (21,-4){$F_2(12,12,3)$};

\end{tikzpicture}
\begin{tikzpicture}[scale = 0.5]
\filldraw[fill=black] (0,0) circle(2pt);
\filldraw[fill=black] (0.5,-2) circle(2pt);
\filldraw[fill=black] (-0.5,-2) circle(2pt);
\filldraw[fill=black] (1,0) circle(2pt);
\filldraw[fill=black] (1.5,-2) circle(2pt);
\filldraw[fill=black] (2,0) circle(2pt);
\filldraw[fill=black] (-2,-2) circle(2pt);
\filldraw[fill=black] (-3,-2) circle(2pt);
\filldraw[fill=black] (-4,-2) circle(2pt);
\filldraw[fill=black] (3,-2) circle(2pt);
\filldraw[fill=black] (4,-2) circle(2pt);
\filldraw[fill=black] (5,-2) circle(2pt);

\draw (0,0) -- (0.5,-2);
\draw  (0,0) -- (-0.5,-2);
\draw  (0,0) -- (0.5,-2);
\draw  (1.5,-2) -- (1,0);
\draw  (1.5,-2) -- (2,0);

\draw  (-2,-2) -- (2,0);
\draw  (-2,-2) -- (1,0);
\draw  (-2,-2) -- (0,0);
\draw  (-3,-2) -- (2,0);
\draw  (-3,-2) -- (1,0);
\draw  (-3,-2) -- (0,0);
\draw  (-4,-2) -- (2,0);
\draw   (-4,-2) -- (1,0);
\draw  (-4,-2) -- (0,0);

\draw  (3,-2) -- (2,0);
\draw  (3,-2) -- (1,0);
\draw  (3,-2) -- (0,0);
\draw  (4,-2) -- (2,0);
\draw  (4,-2) -- (1,0);
\draw  (4,-2) -- (0,0);
\draw  (5,-2) -- (2,0);
\draw  (5,-2) -- (1,0);
\draw  (5,-2) -- (0,0);

\draw (1,0) ellipse (1.5 and 0.5);
\draw (-3,-2) ellipse (1.5 and 0.5);
\draw (4,-2) ellipse (1.5 and 0.5);
\draw (0.5,-2) ellipse (1.5 and 0.5);

\draw  (3,-2) -- (4,-2);
\draw  (4,-2) -- (5,-2);
\draw  (3,-2)..  controls (4,-2.5).. (5,-2);
\draw  (-2,-2) -- (-3,-2);
\draw  (-4,-2) -- (-3,-2);

\draw  (-4,-2)..  controls (-3,-2.5).. (-2,-2);
\draw  (-2,-2) -- (-0.5,-2);

\draw  (-4,-2)..  controls (-1.75,-3).. (0.5,-2);

\draw node at (0.5,-3){$z$};
\draw node at (1.5,-3){$D$};
\draw node at (-0.5,-3){$z^\prime$};
\draw node at (-2,-3){$z_1^\prime$};
\draw node at (-3,-3){$A$};
\draw node at (-4,-3){$z_1$};
\draw node at (4,-3){$B$};
\draw node at (0,1){$z_2(z_2^\prime)$};
\draw node at (1.5,1){$C$};

\draw node at (0.5,-4){$F\in\mathcal{F}_3(12,12,3)$};

\filldraw[fill=black] (13,0) circle(2pt);
\filldraw[fill=black] (13.5,-2) circle(2pt);
\filldraw[fill=black] (12.5,-2) circle(2pt);
\filldraw[fill=black] (11.5,-2) circle(2pt);
\filldraw[fill=black] (14.5,-2) circle(2pt);
\filldraw[fill=black] (11,0) circle(2pt);
\filldraw[fill=black] (15,0) circle(2pt);
\filldraw[fill=black] (10,-2) circle(2pt);
\filldraw[fill=black] (9,-2) circle(2pt);
\filldraw[fill=black] (8,-2) circle(2pt);
\filldraw[fill=black] (16,-2) circle(2pt);
\filldraw[fill=black] (17,-2) circle(2pt);
\filldraw[fill=black] (18,-2) circle(2pt);

\draw (11.5,-2) -- (12.5,-2);
\draw (11,0) -- (11.5,-2);
\draw  (13,0) -- (12.5,-2);
\draw  (13,0) -- (13.5,-2);
\draw  (13.5,-2) -- (14.5,-2);
\draw  (14.5,-2) -- (15,0);

\draw  (16,-2) -- (11,0);
\draw  (16,-2) -- (13,0);
\draw  (16,-2) -- (15,0);
\draw  (17,-2) -- (11,0);
\draw  (17,-2) -- (13,0);
\draw  (17,-2) -- (15,0);
\draw  (18,-2) -- (11,0);
\draw   (18,-2) -- (13,0);
\draw  (18,-2) -- (15,0);

\draw  (8,-2) -- (11,0);
\draw  (8,-2) -- (13,0);
\draw  (8,-2) -- (15,0);
\draw  (9,-2) -- (11,0);
\draw  (9,-2) -- (13,0);
\draw  (9,-2) -- (15,0);
\draw  (10,-2) -- (11,0);
\draw   (10,-2) -- (13,0);
\draw  (10,-2) -- (15,0);

\draw (13,0) ellipse (2.5 and 0.5);
\draw (9,-2) ellipse (1.5 and 0.5);
\draw (17,-2) ellipse (1.5 and 0.5);
\draw (13,-2) ellipse (2 and 0.5);

\draw  (16,-2) -- (17,-2);
\draw  (17,-2) -- (18,-2);
\draw  (16,-2)..  controls (17,-2.5).. (18,-2);
\draw  (8,-2) -- (9,-2);
\draw  (10,-2) -- (9,-2);

\draw  (10,-2)..  controls (9,-2.5).. (8,-2);

\draw node at (13,-3){$D$};
\draw node at (9,-3){$A$};
\draw node at (17,-3){$B$};
\draw node at (13,1){$C$};
\draw node at (13,-4){$F_4(13,12,3)$};

\draw node at (7.5,-6){Figure 1. Some special graphs in the family $\mathcal{F}(m,k,r)$};
\end{tikzpicture}
\end{center}

\noindent We point out that by definition, there is a Hamilton path in each $F\in \mathcal{F}(m,k,r)$ starting from $A$ and ending at $B$.
Also, if $k$ is odd, then all graphs in $\mathcal{F}(m,k,r)$ have Type \1.
Furthermore, $F_4(m,k,\ell-2)=F_4(k+1,k,\ell-2)$ is the only graph in $\mathcal{F}(m,k,r)$ with $m>k$ and $r\leq \ell-2$.

\section{A generalization of P\'{o}sa's lemma}\label{sec posa lemma}

The following well-known lemma is due to P\'{o}sa \cite{Posa1962} and is extensively used in extremal graph theory.

\begin{lemma}[P\'{o}sa \cite{Posa1962}]\label{posa lemma}
Let $G$ be a $2$-connected graph and $P=x_1x_2\cdots x_m$ be a path in $G$.
Then $G$ contains a cycle of length at least $\min\{m,d_P(x_1)+d_P(x_m)\}$.
Moreover, let $i$ be the minimum integer such that $x_i$ is adjacent to $x_m$ and $j$ be the maximum integer such that $x_j$ is adjacent to $x_1$.
If $j=i$, then $G$ contains a cycle of length at least $\min\{m,d_P(x_1)+d_P(x_m)+1\}$.
In addition, if $j<i$, then $G$ contains a cycle of length at least $\min\{m,d_P(x_1)+d_P(x_m)+2\}$.
\end{lemma}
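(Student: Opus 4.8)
The plan is to run P\'{o}sa's rotation argument, using a minimal crossing pair to produce the cycle and reading off the two refinements from the positions of the extreme neighbours of $x_1$ and $x_m$. First dispose of the case $x_1x_m\in E(G)$: then $x_1x_2\cdots x_mx_1$ is a cycle of length $m$, and since in this case the maximal index $j$ with $x_j\in N_P(x_1)$ equals $m>i$, only the bound $\min\{m,d_P(x_1)+d_P(x_m)\}$ is asked for, and $m$ meets it. So assume $x_1x_m\notin E(G)$ and set $i=\min\{t:x_t\in N_P(x_m)\}$ and $j=\max\{t:x_t\in N_P(x_1)\}$, so that $2\le i$, $j\le m-1$, and $P$ has a crossing pair precisely when $i<j$.

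\emph{The basic bound $(i<j)$.} I would pick a crossing pair $(x_a,x_b)_P$ minimising $b-a$. Whenever some intermediate $x_h$ with $a<h<b$ lies in $N_P(x_m)$ (respectively $N_P(x_1)$), replacing $(x_a,x_b)_P$ by $(x_h,x_b)_P$ (respectively $(x_a,x_h)_P$) shrinks the gap, so the chosen pair is minimal and none of $x_{a+1},\dots,x_{b-1}$ is a neighbour of $x_1$ or $x_m$. Then $C=x_1x_2\cdots x_ax_mx_{m-1}\cdots x_bx_1$ is a cycle of length $m-(b-a-1)$ whose complement in $V(P)$ is exactly the interval $\{x_{a+1},\dots,x_{b-1}\}$, so, using also $x_1x_m\notin E(G)$, we get $N_P(x_1)\cup N_P(x_m)\subseteq V(C)\setminus\{x_1,x_m\}$. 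If these neighbourhoods are disjoint we are done, since then $d_P(x_1)+d_P(x_m)\le|C|-2$, whence $c(G)\ge|C|\ge\min\{m,d_P(x_1)+d_P(x_m)\}$. If they meet, one extracts the required cycle from a rotation through a common neighbour (using the closest common neighbours $x_s,x_t$ with $s<t$ one obtains $x_1\cdots x_sx_mx_{m-1}\cdots x_tx_1$ of length $m-(t-s-1)$); verifying that this cycle is long enough is one of the technical points flagged below.

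\emph{The refinements $(i\ge j)$.} Here $P$ has no crossing pair; then $N_P(x_1)\subseteq\{x_2,\dots,x_j\}$ and $N_P(x_m)\subseteq\{x_i,\dots,x_{m-1}\}$ are nested and meet in at most the single vertex $x_i=x_j$, which happens exactly when $j=i$. Running the count in this configuration, a similar estimate yields $d_P(x_1)+d_P(x_m)\le m-1$ when $j=i$ and $d_P(x_1)+d_P(x_m)\le m-2$ when $j<i$, so the targets $\min\{m,d_P(x_1)+d_P(x_m)+1\}$ and $\min\{m,d_P(x_1)+d_P(x_m)+2\}$ are both at most $m$; the issue is to actually realise a cycle of that length. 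For this I would push the rotation one step further, rotating through the hinge $x_i=x_j$ when $j=i$, or exploiting the stretch $x_j,x_{j+1},\dots,x_i$ that separates the two neighbourhoods when $j<i$, and invoke $2$-connectivity to reconnect: since $G$ has no cut vertex, the sub-path strictly between the two neighbourhoods cannot be cut off from the two ``ends'' by a single vertex, which supplies the extra chord(s) needed to pick up one more vertex (if $j=i$) or two more (if $j<i$) in the cycle. The few degenerate configurations ($m$ very small, or $d_P(x_1)=1$ or $d_P(x_m)=1$) I would check directly, using that every $2$-connected graph on at least three vertices contains a cycle.

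\emph{Main obstacle.} I expect the real difficulty to lie in the $i\ge j$ step: converting the one- or two-vertex surplus left by the count into an honestly longer cycle, applying $2$-connectivity at exactly the right place, and keeping the argument uniform rather than letting it fragment into many subcases. Handling common neighbours of $x_1$ and $x_m$ in the $i<j$ step --- i.e.\ showing that the rotated cycle through the closest common pair really has length at least $\min\{m,d_P(x_1)+d_P(x_m)\}$ --- is a secondary, milder technical point.
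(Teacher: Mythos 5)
The paper does not prove Lemma~\ref{posa lemma}; it is stated as a cited classical result of P\'osa~\cite{Posa1962}, so there is no in-paper argument to compare against, and I assess your sketch on its own merits. Your overall architecture---rotation through a minimal crossing pair when $i<j$, and a $2$-connectivity argument otherwise---is the correct one, but there are two genuine gaps.

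In the crossing case $i<j$, your estimate $d_P(x_1)+d_P(x_m)\le |C|-2$ only works when $N_P(x_1)\cap N_P(x_m)=\emptyset$, and your fallback for the intersecting case is not a repair: with $N_P(x_1)=\{x_2,x_3,x_{19}\}$ and $N_P(x_{20})=\{x_3,x_{18},x_{19}\}$ on a path of length $20$, the ``closest'' common-neighbour pair is $(x_3,x_{19})$ and your proposed cycle $x_1x_2x_3x_{20}x_{19}x_1$ has length $5<6=d_P(x_1)+d_P(x_{20})$, so the construction can genuinely undershoot (here the minimal crossing pair is $(18,19)$ and already gives a Hamilton cycle). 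The split into disjoint/intersecting is unnecessary. The clean argument is to count $N_P(x_1)\cup S$ with $S=\{x_{t+1}:x_t\in N_P(x_m)\}$: if $N_P(x_1)\cap S\ne\emptyset$ one has a crossing pair of gap zero and hence a cycle of length $m$, and otherwise both sets avoid $\{x_{a+2},\dots,x_{b-1}\}$ by minimality of $(x_a,x_b)$, both lie in $\{x_2,\dots,x_m\}$, and they are disjoint, so $d_P(x_1)+d_P(x_m)=|N_P(x_1)\cup S|\le (m-1)-(b-a-2)=|C|$ directly, with no case split.

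In the non-crossing cases $i\ge j$, the counts $d_P(x_1)+d_P(x_m)\le m-1$ (if $i=j$) and $\le m-2$ (if $j<i$) are fine, but ``invoke $2$-connectivity to reconnect'' is where all of the work lives and your proposal leaves it unspecified. What is required is an explicit $P$-ear: a path $Q$ internally disjoint from $P$ whose ends $x_u,x_v$ satisfy $u\le j$ and $v\ge i$ (its existence when $i=j$ follows because $x_i$ is not a cut vertex; when $j<i$ one has to argue a little more, e.g.\ by taking ears straddling $x_j$ and $x_i$ and splicing). With $p=\min\{h>u:x_h\in N_P(x_1)\}$ and $q=\max\{h<v:x_h\in N_P(x_m)\}$ (interpreted vacuously if $u=j$ or $v=i$), the cycle $x_1Px_uQx_vPx_mx_qPx_px_1$ contains all of $N_P[x_1]\cup N_P[x_m]$; since these closed neighbourhoods share at most $\{x_i\}$ when $i=j$ and are disjoint when $j<i$, this yields the $+1$ and $+2$ refinements respectively. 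Without this explicit ear-and-cycle construction, and without the fix in the crossing case, the sketch is not a proof.
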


The following lemma, which combines the ideas of P\'{o}sa's lemma \cite{Posa1962} and Kopylov's work \cite{Kopylov1977}, is the key technical part in the proof of our main theorem.
As we shall see in later sections, under contain circumstances, to find some particular useful subgraphs will be enough to determine the whole structure of graphs.
Recall that $k\geq5$ and $\ell=\lfloor(k-1)/2\rfloor$.

\begin{lemma}\label{extend posa lemma}
Let $G$ be a $2$-connected graph with $c(G)<k$.
Suppose that the $(\ell-1)$-disintegration $H$ of $G$ is non-empty and the longest $H$-path in $G$ has $m\geq k$ vertices.
Then $G$ contains a subgraph $F\in \mathcal{F}(m,k,r)$ for some $r\leq \ell$.
\end{lemma}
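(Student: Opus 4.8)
The plan is to take a longest $H$-path $P=x_1x_2\cdots x_m$ and to analyse it by P\'osa-style rotations, using $c(G)<k$ together with $m\ge k$ to squeeze $P$ and its chords into one of the four Types of $\mathcal{F}(m,k,r)$.

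\textbf{Step 1 (high-degree endpoints).} Since $x_1,x_m\in V(H)$, the graph $H$ has minimum degree at least $\ell$, and any neighbour of $x_1$ or $x_m$ lying in $V(H)\setminus V(P)$ would prolong $P$ into a longer $H$-path, one gets $d_P(x_1),d_P(x_m)\ge\ell$. Applying Lemma~\ref{posa lemma} to $P$ and using $m\ge k>c(G)$ forces $d_P(x_1)+d_P(x_m)\le k-1$, improved to $\le k-2$ if $j=i$ and to $\le k-3$ if $j<i$, in the notation of Lemma~\ref{posa lemma}. Therefore, when $k=2\ell+1$ we must have $d_P(x_1)=d_P(x_m)=\ell$ and $j>i$, so $P$ is a crossing path; when $k=2\ell+2$ we have $\{d_P(x_1),d_P(x_m)\}\subseteq\{\ell,\ell+1\}$ with at most one value equal to $\ell+1$, and either $j>i$ or the borderline case $j=i$ occurs. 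These already match the degree patterns prescribed for the Types.

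\textbf{Step 2 (rotation reservoirs and the partition).} Fix $x_m$ and perform all rotations of $P$ along edges at the moving left end; this yields a family of paths, all with vertex set $V(P)$. Two facts drive the argument: (i) in no such path are the two endpoints adjacent, since that would close a cycle on $V(P)$ of length $m\ge k$; and (ii) a crossing pair of length $L$ in any such path produces a cycle of length $m-L$, so every crossing pair ever arising has length $\ge m-k+1$. Let $A_0$ be the set of left endpoints reachable this way and $B_0$ its mirror image obtained by fixing $x_1$. Feeding (i), (ii) and the exact degrees from Step~1 into the standard interval analysis of rotations (after first replacing $P$, if needed, by an extremally chosen longest $H$-path), I expect to extract a partition $V(P)=A\cup B\cup C\cup D$ with $x_1\in A$, $x_m\in B$, with $A,B$ the reservoirs $A_0,B_0$ up to bounded corrections, with $C$ a set of common connectors adjacent in $G$ to every vertex of $A$ and of $B$, and with $D$ the remaining vertices of $P$. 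Independence of $C$ follows from (ii): a chord inside $C$ inserted between the two reservoirs creates a crossing pair of length below $m-k+1$. A count of $N_P(x_1)$ (respectively $N_P(x_m)$) then gives $|C|=\ell-r+1$, where $r$ is essentially the common reservoir size, whence $r\le\ell$; and the way $P$ threads between the reservoirs pins down $F[C\cup D]$ as the prescribed (near-)path. Since no vertex of $P$ escapes this partition, the spanning subgraph $F$ of $G[V(P)]$ carrying exactly the required edges has $|V(F)|=m$ and a Hamilton path from $A$ to $B$.

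\textbf{Step 3 (identifying the Type, and the main obstacle).} When $k=2\ell+1$ the reservoirs are genuine cliques, $|A|=|B|=r$, both are completely joined to $C$, and one lands in Type~\1; this is why the odd case is clean. When $k=2\ell+2$ one branches according to which endpoint (if any) has degree $\ell+1$, according to whether $r=\ell$ (which leads to Type~\4, where $C$ instead supports a short cycle) or $r\le\ell-1$, and according to the precise adjacencies between the reservoirs and the path $F[C\cup D]$; this produces Types~\2 and~\3, and in a handful of extremal sub-configurations the named graphs $F_0,F_1,F_2,F_4,F_5$ and the family $\mathcal{F}_3$. I expect this even-$k$ case analysis to be the main difficulty: one must check that \emph{every} chord of $P$ is among the adjacencies permitted by the relevant Type (so that $F$ is genuinely a member of $\mathcal{F}(m,k,r)$, not merely a subgraph of some larger graph), that no vertex of $P$ is left unclassified, and that precisely the listed exceptional graphs arise. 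By contrast, Steps~1 and~2 and the odd case of Step~3 should be routine given Lemma~\ref{posa lemma}.
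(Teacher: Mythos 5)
Your high-level framework -- take a longest $H$-path, note both endpoints have $P$-degree at least $\ell$ (since they lie in the $(\ell-1)$-disintegration), apply P\'osa's lemma to force a crossing configuration, and then read off a partition $V(P)=A\cup B\cup C\cup D$ from the chord structure -- does match the paper's overall approach. But the proposal is not a proof; the load-bearing steps are deferred rather than carried out, and the deferred steps are precisely where the real work lies.

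Concretely, three gaps. First, Step~1's treatment of the borderline case $j=i$ for even $k$ is not resolved: having both endpoint degrees equal to $\ell$ does not, by rotations alone, produce a crossing path. The paper's Claim~\ref{3evencrossing} resolves this using $2$-connectivity to find a connecting path $Q$, deduces that the resulting cycle $C_0$ has length exactly $k-1$, and then either produces a crossing path after all or exhibits a copy of the Type~\4 graph. Your proposal only says ``either $j>i$ or the borderline case $j=i$ occurs'' without handling that borderline case, which is exactly where Type~\4 is forced to appear. Second, Step~2's phrase ``I expect to extract a partition'' conceals the central technical tool of the whole lemma. The paper does not use a generic ``rotation reservoir'' argument; instead it introduces \emph{minimal crossing pairs}, bounds their number and length (Claims~\ref{3lengthofcp}, \ref{3oddunique}, \ref{3evenunique}), and proves the key Claim~\ref{3key}: if the crossing-pair endpoint indices $s,t$ satisfy $x_s\in V(H)$ and $x_{s+1}\in N_P(x_1)$, then $x_1$ cannot be adjacent to two consecutive vertices of $x_jPx_{t-1}$. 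This claim is what makes $C$ alternate and hence be independent, and what pins down the endpoints of the crossing window as $i_1=s+1$, $j_2=t-1$. Your justification for the independence of $C$ -- that a chord inside $C$ would create a crossing pair of length below $m-k+1$ -- does not hold in general: if two vertices of $C$ at distance two on $P$ are joined by a chord, the natural cycle through it has length $m-1$, not $<k$. Without Claim~\ref{3key} or a substitute, the partition and the Type identification cannot be nailed down. Third, Step~3 acknowledges that the even-$k$ case analysis (producing Types~\2, \3 and the named graphs $F_0,F_1,F_2,F_4,F_5,\mathcal{F}_3$) is the main difficulty and then does not do it; that case analysis, split by $m\ge k+1$ versus $m=k$ and driven by the minimal-crossing-pair structure, occupies the majority of the paper's proof and cannot be waved away.

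In short, the outline has the right shape but the genuinely novel ingredient of the paper -- the interaction between P\'osa rotation and Kopylov's disintegration condition encoded in Claim~\ref{3key} and the minimal-crossing-pair analysis -- is missing, and without it Steps~2 and~3 do not go through.
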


\noindent{\it Proof.}
We devote the rest of this section to the proof of this lemma.
Suppose to the contrary that $G$ dose not contain any subgraph in $\mathcal{F}(m,k,r)$ with $r\leq \ell$.
Let $\mathcal{P}$ be the family of all longest $H$-paths in $G$.
We process by showing a sequence of claims in what follows.

\begin{claim}\label{3basiclemma}
Every $P=x_1x_2\cdots x_m\in\mathcal{P}$ satisfies the following properties.
\begin{itemize}
    \item [$(\romannumeral1)$] $N_H(x_1)\subseteq N_P(x_1)$ and $N_H(x_m)\subseteq N_P(x_m)$,
    \item [$(\romannumeral2)$] $d_P(x_1)\geq d_H(x_1)\geq \ell$ and $d_P(x_m)\geq d_H(x_m)\geq\ell$, and
    \item [$(\romannumeral3)$] $N_P^-(x_1)\cap N_P[x_m]=\emptyset$ and $N_P^+(x_{m})\cap N_P[x_1]=\emptyset$.
\end{itemize}
\end{claim}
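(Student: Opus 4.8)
The plan is to verify the three items in order; each follows from a short extremality argument, so the real work is only to set them up correctly for $H$-paths. Throughout, note that $x_1,x_m\in V(H)$ since $P$ is an $H$-path, and that $H$, being the $(\ell-1)$-disintegration of $G$, has minimum degree at least $\ell$. For $(\romannumeral1)$ I would argue by contradiction: if some $z\in N_H(x_1)$ did not lie on $P$, then $z\in V(H)$ and $zx_1x_2\cdots x_m$ would be a path on $m+1$ vertices with both endpoints, $z$ and $x_m$, in $V(H)$ --- that is, an $H$-path strictly longer than $P$, contradicting $P\in\mathcal{P}$. Hence $N_H(x_1)\subseteq V(P)$, and since $N_H(x_1)\subseteq N_G(x_1)$ this yields $N_H(x_1)\subseteq N_P(x_1)$; the statement for $x_m$ is symmetric. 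Item $(\romannumeral2)$ is then immediate: $(\romannumeral1)$ gives $d_P(x_1)\ge d_H(x_1)$ and $d_P(x_m)\ge d_H(x_m)$, while $d_H(x_1),d_H(x_m)\ge\ell$ by the minimum-degree property of $H$.

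For $(\romannumeral3)$ I would run the standard P\'{o}sa rotation. Index reasons already give $N_P^-(x_1)\subseteq\{x_1,\dots,x_{m-1}\}$ and $N_P^+(x_m)\subseteq\{x_2,\dots,x_m\}$, so $x_m\notin N_P^-(x_1)$ and $x_1\notin N_P^+(x_m)$; hence both assertions in $(\romannumeral3)$ reduce to showing that no index $j$ (necessarily with $2\le j\le m$) satisfies $x_j\in N_P(x_1)$ and $x_{j-1}\in N_P(x_m)$ simultaneously. Suppose such a $j$ existed. Then
\[
x_1x_2\cdots x_{j-1}\,x_m x_{m-1}\cdots x_{j}\,x_1
\]
is a cycle of $G$: it follows $P$ from $x_1$ to $x_{j-1}$, uses the edge $x_{j-1}x_m$, follows $P$ backwards from $x_m$ to $x_j$, and closes with the edge $x_jx_1$. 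It has $(j-1)+(m-j+1)=m$ vertices, so $c(G)\ge m\ge k$, contradicting $c(G)<k$. Hence no such $j$ exists, which is exactly $(\romannumeral3)$.

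I do not expect a genuine obstacle here: the claim is essentially a repackaging, in the language of $H$-paths, of the path-extension and rotation ideas underlying Lemma~\ref{posa lemma}. The only points that need a moment's care are checking that the extended path in $(\romannumeral1)$ has its new endpoint $z$ inside $V(H)$, so that it really is an $H$-path, and checking that the rotated cycle in $(\romannumeral3)$ spans exactly the $m$ (hence at least $k$) vertices of $P$ for every admissible $j$.
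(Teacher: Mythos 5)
Your proof is correct and follows essentially the same approach as the paper: part~$(\romannumeral1)$ by extending $P$ to the longer $H$-path $zx_1\cdots x_m$, part~$(\romannumeral2)$ as an immediate consequence of $(\romannumeral1)$ and the minimum-degree property of the $(\ell-1)$-disintegration, and part~$(\romannumeral3)$ by the standard rotation producing a cycle through all $m\ge k$ vertices of $P$. The paper's version of $(\romannumeral3)$ phrases the contradiction via an element $x_i\in N_P^-(x_1)\cap N_P[x_m]$ rather than your index $j$, but the cycle $x_1Px_ix_mPx_{i+1}x_1$ is exactly your $x_1\cdots x_{j-1}x_mx_{m-1}\cdots x_jx_1$ with $i=j-1$; your preliminary remark that $x_m\notin N_P^-(x_1)$ and $x_1\notin N_P^+(x_m)$ correctly disposes of the closed-neighborhood case.
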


\begin{proof}
Suppose to the contrary that there exists $y\in N_H(x_1)\setminus N_P(x_1)$.
Clearly, $yx_1Px_m$ is an $H$-path longer than $P$, a contradiction.
Therefore, we have $N_H(x_1)\subseteq N_P(x_1)$.
Similarly, we have $N_H(x_m)\subseteq N_P(x_m)$.
Note that $H$ is the $(\ell-1)$-disintegration of $G$ and $H$ is non-empty.
It follows that $d_P(x_1)\geq d_H(x_1)\geq \ell$ and $d_P(x_m)\geq d_H(x_m)\geq \ell$.

Suppose to the contrary that $N_P^-(x_1)\cap N_P[x_{m}]\neq\emptyset$.
Let $x_i$ be a vertex in $N_P^-(x_1)\cap N_P[x_{m}]$.
Then $x_1Px_ix_mPx_{i+1}x_1$ is a cycle of length $m$ in $G$, a contradiction.
Therefore, we have $N_P^-(x_1)\cap N_P[x_m]=\emptyset$.
Similarly, we have $N_P^+(x_m)\cap N_P[x_1]=\emptyset$.
\end{proof}

Let $N_H^-(x_{1})=N_P^-(x_{1})\cap V(H)$ and   $N_H^+(x_{m})=N_P^+(x_{m})\cap V(H)$.

\begin{claim}\label{3lengthofcp}
Let $P=x_1x_2\cdots x_m$ be a crossing path in $\mathcal{P}$ and $(i,j)$ be a minimal crossing pair of $P$. Let $$U_i= N_H[x_1]\cup( N_H^+(x_{m})\setminus \{x_{i+1}\} )\mbox{ and }V_j= N_H[x_m]\cup( N_H^-(x_{1})\setminus \{x_{j-1}\}).$$
Then the following properties hold.
\begin{itemize}
    \item [$(\romannumeral1)$] $U_i\subseteq V(x_1Px_i)\cup V(x_jPx_m)$ and $V_j\subseteq V(x_1Px_i)\cup V(x_jPx_m)$,
\item [$(\romannumeral2)$] $m-k< j-i-1\leq m-2\ell$, i.e., $2\ell\leq |V(x_1Px_i)\cup V(x_jPx_m)|\leq 2\ell+1$, and
 \item [$(\romannumeral3)$] $|V(x_1Px_i)\cup V(x_jPx_{m})\setminus U_i|=|V(x_1Px_i)\cup V(x_jPx_{m})\setminus V_j|\leq 1$. In particular, if $k$ is odd or $d_H(x_1)+d_H(x_m)=2\ell+1$ or $j-i-1=m-2\ell$, then $V(x_1Px_i)\cup V(x_jPx_{m})=U_i=V_j$.
\end{itemize}
\end{claim}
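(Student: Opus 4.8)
Write $Q=x_1Px_i$, $R=x_jPx_m$ and $W=V(Q)\cup V(R)$; as $i<j$ these subpaths are disjoint, so $W=\{x_1,\dots,x_i\}\cup\{x_j,\dots,x_m\}$ and $|W|=m-(j-i-1)$. The plan is to get $(\romannumeral1)$ by inclusion-chasing, to read both inequalities of $(\romannumeral2)$ off two cardinality estimates for $W$, and to deduce $(\romannumeral3)$ by comparing $|U_i|$, $|V_j|$ with $|W|$. I would first record that $x_1x_m\notin E(G)$, since otherwise $P$ plus the edge $x_1x_m$ is a cycle on $m\geq k$ vertices, contradicting $c(G)<k$; hence $2\leq i<j\leq m-1$.

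For $(\romannumeral1)$: by the minimality of $(i,j)$, no vertex $x_h$ with $i<h<j$ lies in $N_P(x_1)\cup N_P(x_m)$; combining this with $N_H(x_1)\subseteq N_P(x_1)$, $N_H(x_m)\subseteq N_P(x_m)$ from Claim~\ref{3basiclemma}$(\romannumeral1)$, and with $x_1\in V(Q)$, $x_m\in V(R)$, gives $N_H[x_1],N_H[x_m]\subseteq W$. Moreover every vertex of $N_P^+(x_m)$ is the $P$-successor of a $P$-neighbour of $x_m$, hence lies in $\{x_2,\dots,x_{i+1}\}\cup\{x_{j+1},\dots,x_m\}$, whose only element outside $W$ is $x_{i+1}$; so $N_H^+(x_m)\setminus\{x_{i+1}\}\subseteq W$, and symmetrically $N_H^-(x_1)\setminus\{x_{j-1}\}\subseteq W$. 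Hence $U_i,V_j\subseteq W$.

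For $(\romannumeral2)$: the edges $x_ix_m$ and $x_jx_1$ close $Q$ and $R$ into the cycle $x_1Px_ix_mPx_jx_1$, whose vertex set is exactly $W$; since $c(G)<k$ this forces $|W|\leq k-1$, i.e.\ $j-i-1=m-|W|>m-k$. For the other inequality, $N_P[x_1]$ and $N_P^+(x_m)$ are disjoint by Claim~\ref{3basiclemma}$(\romannumeral3)$ and contained in $W\cup\{x_{i+1}\}$ with $x_{i+1}\notin W$, so $|W|\geq|N_P[x_1]\cup N_P^+(x_m)|-1=d_P(x_1)+d_P(x_m)\geq 2\ell$ by Claim~\ref{3basiclemma}$(\romannumeral2)$; thus $j-i-1\leq m-2\ell$, and since $k\leq 2\ell+2$ we conclude $|W|\in\{2\ell,2\ell+1\}$. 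For $(\romannumeral3)$: the union defining $U_i$ is disjoint by Claim~\ref{3basiclemma}$(\romannumeral3)$, and counting (using Claim~\ref{3basiclemma}) gives $|U_i|=(d_H(x_1)+1)+(d_H(x_m)-\varepsilon_i)$, where $\varepsilon_i=1$ if $x_{i+1}\in V(H)$ and $\varepsilon_i=0$ otherwise; symmetrically $|V_j|=(d_H(x_m)+1)+(d_H(x_1)-\varepsilon_j)$ with $\varepsilon_j$ read off from $x_{j-1}$. Since $d_H(x_1),d_H(x_m)\geq\ell$ and $|W|\leq 2\ell+1$, we get $|W\setminus U_i|=|W|-|U_i|\leq 1$ and likewise $|W\setminus V_j|\leq 1$. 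In the three distinguished cases one obtains $U_i=V_j=W$ directly: if $k$ is odd, or $j-i-1=m-2\ell$, then $|W|=2\ell$, so $2\ell\leq|U_i|\leq|W|$ forces $U_i=W$; if $d_H(x_1)+d_H(x_m)=2\ell+1$, then $|U_i|\leq|W|\leq 2\ell+1$ forces $\varepsilon_i=1$ and $|U_i|=2\ell+1=|W|$, hence $U_i=W$; and symmetrically for $V_j$ in each case. Outside these cases $k=2\ell+2$, $d_H(x_1)=d_H(x_m)=\ell$ and $|W|=2\ell+1$, so $|W\setminus U_i|=\varepsilon_i$ and $|W\setminus V_j|=\varepsilon_j$, and the claim reduces to showing $\varepsilon_i=\varepsilon_j$, i.e.\ $x_{i+1}\in V(H)\iff x_{j-1}\in V(H)$.

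This last reduction is the crux. When $j=i+2$ it is immediate, since then $x_{i+1}=x_{j-1}$. When $j\geq i+3$ the plan is to rule out the asymmetric possibility, say $x_{i+1}\in V(H)$ while $x_{j-1}\notin V(H)$: here $x_1Px_ix_mPx_{i+1}$ is once more a path on all $m$ vertices with both ends in $V(H)$, hence a longest $H$-path in $\mathcal{P}$, and running Claim~\ref{3basiclemma} on this rotated path together with the standing assumption that $G$ contains no member of $\bigcup_{r\leq\ell}\mathcal{F}(m,k,r)$ should force $G$ to contain a forbidden subgraph of Type~\3, a contradiction. I expect this step — extracting the precise Type~\3 configuration out of the interplay between $P$ and its rotation when the minimal crossing pair behaves differently at its two ends — to be the main obstacle; the rest is just the one Pósa-type cycle through $W$ together with routine counting against Claim~\ref{3basiclemma}.
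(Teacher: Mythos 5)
Parts $(\romannumeral1)$ and $(\romannumeral2)$ of your argument are correct and essentially identical to the paper's (for $(\romannumeral2)$ the paper chases $N_P^-(x_1)\setminus\{x_{j-1}\}$ and $N_P[x_m]$ rather than $N_P[x_1]$ and $N_P^+(x_m)\setminus\{x_{i+1}\}$, but the two are mirror images). The issues are in $(\romannumeral3)$.

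There is first a notational wrinkle that your counting does not survive. The paper sets $N_H^+(x_m)=N_P^+(x_m)\cap V(H)$; under this definition $x_{i+1}\in N_H^+(x_m)$ is indeed equivalent to $x_{i+1}\in V(H)$, as your $\varepsilon_i$ records, but then $|N_H^+(x_m)|$ is \emph{not} in general $d_H(x_m)$, so your identity $|U_i|=(d_H(x_1)+1)+(d_H(x_m)-\varepsilon_i)$ does not follow. If one instead reads $N_H^+(x_m)$ as $(N_H(x_m))^+$ (the reading that makes $|N_H^+(x_m)|=d_H(x_m)$ true, and the one consistent with the lower bound $|U_i|\ge 2\ell$ asserted in the paper's one-line proof), the correction term is controlled by whether $x_i\in V(H)$, not $x_{i+1}$. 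Either way your displayed cardinality formula needs amendment; once it is, the bound $|W\setminus U_i|\le 1$ and the three distinguished ``in particular'' cases go through as you wrote them.

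The genuine gap is the \emph{equality} $|W\setminus U_i|=|W\setminus V_j|$. You correctly reduce this, in the only case not already covered (namely $k$ even, $d_H(x_1)=d_H(x_m)=\ell$, $|W|=2\ell+1$), to showing the two correction indicators agree, settle it trivially when $j=i+2$, and then for $j\ge i+3$ offer only a speculative plan (``rotate $P$ and extract a Type~\3\ configuration'') that you yourself flag as the main obstacle. That step is not carried out, so the proof of $(\romannumeral3)$ is incomplete. The paper's own proof of $(\romannumeral3)$ consists of the single assertion that ``$|U_i|=|V_j|\ge 2\ell$ easily'', which likewise never justifies the equality $|U_i|=|V_j|$, so the gap is one the paper glosses over as well; nevertheless you have identified the right obstruction without resolving it, and until this residual case is actually argued the proposal does not prove the stated claim.
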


\begin{proof}
By definition of a minimal crossing pair, we can easily obtain $(\romannumeral1)$.
Since $c(G)<k$ and $x_1Px_ix_mPx_jx_1$ is a cycle of length $m-(j-i-1)$, we have that $m-k < j-i-1$.
Suppose that $j-i-1 > m-2\ell$, i.e., $|V(x_1Px_i)\cup V(x_jPx_m)|<2\ell$.
Since $|N_P^-(x_1)\setminus\{x_{j-1}\}|\geq \ell-1$ and $| N_P[x_{m}]|\geq \ell+1$,
it follows that $N_P^-(x_1)\cap N_P(x_{m})\neq\emptyset$, a contradiction.
Therefore, we have $j-i-1\leq m-2\ell$, proving $(\romannumeral2)$.
Lastly, $(\romannumeral3)$ should follow from the fact that $|U_i|=|V_j|\geq 2\ell$ easily.
\end{proof}

Next we consider the neighbors of end-vertices of a path with a crossing pair.
The following claim strengthens Claim~\ref{3basiclemma}$(\romannumeral3)$ and Claim~\ref{3lengthofcp} and will be used many times throughout the proof.

Let $N^{+1}_P(x_m)= N^{+}_P(x_m)$ and $N^{+i}_P(x_m)= (N^{+(i-1)}_P(x_m))^+$ for $i\geq 2$.

\begin{claim}\label{3structure}
Let $P=x_1x_2\cdots x_m$ be a crossing path with $d_P(x_1)\geq \ell$ and $d_P(x_m)\geq\ell$. If $k$ is even, then $x_1$ is adjacent to all vertices but at most one in $V(P)\setminus (\bigcup^{m-k+1}_{i=1}N^{+i}_P(x_m)\cup \{x_1\})$. Moreover, if $d_P(x_1)=|V(P)\setminus (\bigcup^{m-k+1}_{i=1}N^{+i}_P(x_m)\cup \{x_1\})|$, then $x_1$ is adjacent to each vertex of $V(P)\setminus (\bigcup^{m-k+1}_{i=1}N^{+i}_P(x_m)\cup \{x_1\})$. In particular, if $k$ is odd, then $x_1$ is adjacent to each vertex of $V(P)\setminus (\bigcup^{m-k+1}_{i=1}N^{+i}_P(x_m)\cup \{x_1\})$.
\end{claim}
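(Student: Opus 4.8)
The plan is to reduce the claim to a single counting inequality about the set $T:=V(P)\setminus(\bigcup_{i=1}^{m-k+1}N^{+i}_P(x_m)\cup\{x_1\})$, after first showing that every neighbour of $x_1$ on $P$ already lies in $T$; throughout I use $m\ge k$, which holds for every path to which this claim is applied. Put $S=\bigcup_{i=1}^{m-k+1}N^{+i}_P(x_m)$. Since $P$ is a path on its $m$ vertices, $x_1\not\sim x_m$ (otherwise $P$ closes into an $m$-cycle, impossible as $c(G)<k\le m$), so $x_1\notin S$ while $x_m\in S$. I would first prove $N_P(x_1)\cap S=\emptyset$: if $x_p\in N_P(x_1)\cap S$, write $x_p=x_{t+i}$ with $x_t\sim x_m$ and $1\le i\le m-k+1$; then $2\le t<p$, and $x_1x_2\cdots x_t\,x_m x_{m-1}\cdots x_p\,x_1$ is a cycle on $t+(m-p+1)=m-i+1\ge k$ vertices, contradicting $c(G)<k$. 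Hence $N_P(x_1)\subseteq T$, so $d_P(x_1)\le|T|$ and it remains to bound $|T|$ from above.

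Next I would describe $T$ through the indices of the neighbours of $x_m$. Let $q_1<\cdots<q_d=m-1$ be these indices, so $d=d_P(x_m)\ge\ell$ and $q_1\ge 2$. A short index computation gives $x_p\notin S$ iff none of $q_1,\dots,q_d$ lies in $\{p-(m-k+1),\dots,p-1\}$; hence $\{x_2,\dots,x_{q_1}\}\subseteq T$, $x_m\in S$, and for each $a$ the number of $T$-indices $p$ with $q_a<p\le q_{a+1}$ is $\max\{0,\,g_a-(m-k+1)\}$ where $g_a:=q_{a+1}-q_a$. Thus $|T|=(q_1-1)+\sum_{a=1}^{d-1}\max\{0,\,g_a-(m-k+1)\}$. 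If $r$ of the gaps satisfy $g_a\ge m-k+2$, then (the other $d-1-r$ gaps each being $\ge 1$) these $r$ gaps sum to at most $m-1-q_1-(d-1-r)$, so the sum above is at most $m-q_1-d-r(m-k)$; since $m\ge k$ this yields $\sum_{a}\max\{0,\,g_a-(m-k+1)\}\le\max\{0,\,k-q_1-d\}$, and therefore $|T|\le(q_1-1)+\max\{0,\,k-q_1-d\}$.

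Now the crossing hypothesis enters, and this is the heart of the proof. A crossing pair $(i,j)$ of $P$ has $x_i\sim x_m$, $x_j\sim x_1$ with $i<j$, so $i\ge q_1$ and hence $j>q_1$; by the first step $x_j\in T$, so $T\supseteq\{x_2,\dots,x_{q_1}\}\cup\{x_j\}$ and $|T|\ge q_1$. Comparing with the displayed bound forces $\max\{0,\,k-q_1-d\}\ge 1$, i.e.\ $k-q_1-d\ge 1$, whence $|T|\le(q_1-1)+(k-q_1-d)=k-1-d\le k-1-\ell$, which is $\ell+1$ when $k=2\ell+2$ and $\ell$ when $k=2\ell+1$. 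Combining $N_P(x_1)\subseteq T$ with $d_P(x_1)\ge\ell$ finishes it: the number of vertices of $T$ not adjacent to $x_1$ equals $|T|-d_P(x_1)$, which is at most $1$ when $k$ is even and is $0$ whenever $d_P(x_1)=|T|$ (for then $N_P(x_1)=T$), giving the ``moreover'' clause; and when $k$ is odd, $\ell\le d_P(x_1)\le|T|\le\ell$ forces $N_P(x_1)=T$, so $x_1$ is adjacent to every vertex of $T$.

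The main obstacle I anticipate is the middle paragraph — getting the precise description of $T$ in terms of the gaps between consecutive neighbours of $x_m$, and the bound $|T|\le(q_1-1)+\max\{0,k-q_1-d\}$ — together with the conceptual observation that the crossing-path hypothesis is exactly what rules out the degenerate case in which every neighbour of $x_1$ lies to the left of $q_1$, the one configuration where the asserted inequality would fail.
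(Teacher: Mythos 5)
Your proof is correct and follows the same strategy as the paper: show that $N_P(x_1)$ avoids the set $S=\bigcup_{i=1}^{m-k+1}N^{+i}_P(x_m)$ (via the no-long-cycle argument), bound $\left|V(P)\setminus S\right|$ using the crossing-pair hypothesis, and then compare with $d_P(x_1)\ge\ell$. The paper compresses the second step into the single asserted inequality $\left|V(P)\setminus S\right|\le m-(d_P(x_m)-1)-(m-k+1)=k-d_P(x_m)$, while your gap-counting computation of $|T|$ in terms of the gaps $g_a=q_{a+1}-q_a$, combined with the observation that the crossing pair forces $|T|\ge q_1$ and hence $k-q_1-d\ge 1$, supplies a rigorous derivation of exactly this bound. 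So the proposal matches the paper's proof in outline and fills in the one step that the paper leaves to the reader.
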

\begin{proof}
Clearly, since $c(G)<k$, we have $N_P[x_1] \subseteq V(P)\setminus \bigcup^{m-k+1}_{i=1}N^{+i}_P(x_m)$.
Since $P$ has a crossing pair, we have $|V(P)\setminus \bigcup^{m-k+1}_{i=1}N^{+i}_P(x_m)|\leq m-(d_P(x_m)-1)-(m-k+1)=k-d_P(x_m)$.
If $d_P(x_1)\geq \ell$, $d_P(x_m)\geq\ell$ and $k$ is even, then $|V(P)\setminus \bigcup^{m-k+1}_{i=1}N^{+i}_P(x_m)|\leq \ell+2$.
Hence, using $d_P(x_1)\geq \ell$, $x_1$ must be adjacent to all vertices but at most one in $V(P)\setminus (\bigcup^{m-k+1}_{i=1}N^{+i}_P(x_m)\cup \{x_1\})$.
The proof for the rest of Claim~\ref{3structure} is similar and omitted.
\end{proof}

For $P=x_1x_2\cdots x_m\in \mathcal{P}$, let $s_P=\min\{h:x_{h+1}\in N_P(x_m)\}$ and $t_P=\max\{h:x_{h-1}\in N_P(x_1)\}$.\footnote{When there is no ambiguity, we often omit the subscript index in $s_P$ and $t_P$ (such as in the coming claim).}

\begin{claim}\label{3key}
Let $P=x_1x_2\cdots x_m$ be a crossing path in $\mathcal{P}$ with a minimal crossing pair $(i,j)$.
If $x_{s}\in V(H)$ and $x_{s+1}\in N_P(x_1)$, then $x_1$ cannot be adjacent to two consecutive vertices of $x_jPx_{t-1}$.
Similarly, if $x_{t}\in V(H)$ and $x_{t-1}\in N_P(x_m)$, then $x_m$ cannot be adjacent to two consecutive vertices of $x_{s+1}Px_i$.
\end{claim}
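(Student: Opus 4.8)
The plan is to prove the first assertion; the second is the first applied to the reversed path $x_mx_{m-1}\cdots x_1$ (under which the crossing pair $(i,j)$ and the indices $s,t$ simply swap roles), so it follows automatically. Suppose, for a contradiction, that $x_1$ is adjacent to two consecutive vertices $x_p,x_{p+1}$ of $x_jPx_{t-1}$, so that $j\le p$, $p+1\le t-1$ and $x_1x_p,x_1x_{p+1}\in E(G)$. Two elementary observations drive the argument. First, since $x_{s+1}\in N_P(x_m)$ by definition of $s$ and $x_{s+1}\in N_P(x_1)$ by hypothesis, the edges $x_mx_{s+1}$ and $x_1x_{s+1}$ are at our disposal: $G-x_1$ contains the cycle $x_{s+1}x_{s+2}\cdots x_mx_{s+1}$, and any cycle of $G$ using the edge $x_px_{p+1}$ but avoiding $x_1$ can be lengthened by one, replacing $x_px_{p+1}$ with the path $x_px_1x_{p+1}$. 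Second, since $x_s\in V(H)$ and $x_1x_{s+1}\in E(G)$, the path $P''=x_sx_{s-1}\cdots x_1x_{s+1}x_{s+2}\cdots x_m$ is again an $H$-path on all of $V(P)$, hence $P''\in\mathcal P$, so Claims~\ref{3basiclemma}, \ref{3lengthofcp} and \ref{3structure} may be invoked for $P''$ as well as for $P$.

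Now $c(G)<k$ applied to the cycle $x_{s+1}x_{s+2}\cdots x_mx_{s+1}$, which has $m-s$ vertices, gives $m-s\le k-1$; inserting $x_1$ into it through the edge $x_px_{p+1}$ (legitimate since $s+1\le i<j\le p$ and $p+1\le t-1\le m-1$) produces a cycle on $m-s+1$ vertices, so $m-s+1\le k-1$, i.e. $s\ge m-k+2$. Since $x_{s+1}$ is the first neighbour of $x_m$ on $P$ we have $s+1\le i$, so Claim~\ref{3lengthofcp}$(\romannumeral2)$ yields
\[
m-j+1=|V(x_jPx_m)|\le 2\ell+1-|V(x_1Px_i)|=2\ell+1-i\le 2\ell-s\le 2\ell-m+k-2,
\]
which, using $2\ell\le k-1$, rearranges to $j\ge 2(m-k)+4$. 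On the other hand $x_1x_{p+1}\in E(G)$ makes $x_1x_2\cdots x_{p+1}x_1$ a cycle on $p+1$ vertices, so $p\le k-2$; together with $j\le p$ this forces $2(m-k)+4\le k-2$, i.e. $m\le(3k-6)/2$. In particular this already gives the desired contradiction whenever $m>(3k-6)/2$ (and this range of $m$ is empty when $k=5$).

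It remains to handle the tight configurations $k\le m\le(3k-6)/2$, and this is the crux of the proof. In this range all the inequalities above are nearly equalities, so Claim~\ref{3lengthofcp}$(\romannumeral3)$ together with Claim~\ref{3structure} pins down which vertices of $x_1Px_i$ and $x_jPx_m$ are neighbours of $x_1$ and of $x_m$, and applying Claim~\ref{3basiclemma} to $P''$ (using $d_H(x_s)\ge\ell$) locates the neighbours of $x_s$ in the same way. The aim is then to exploit this rigidity either to find one further vertex that can be spliced into the cycle $x_{s+1}x_{s+2}\cdots x_mx_{s+1}$ — so that after also inserting $x_1$ its length reaches $k$, contradicting $c(G)<k$ — or, when no such vertex exists, to recognise $G[V(P)]$, with the Hamilton path $P$ (or $P''$) in the role of the $A$-to-$B$ path, as one of the graphs in $\mathcal F(m,k,r)$ for some $r\le\ell$, contradicting the standing hypothesis of the proof of Lemma~\ref{extend posa lemma}. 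The two parities $k=2\ell+1$ and $k=2\ell+2$ enter only at this final stage, through the differing shapes of the members of $\mathcal F(m,k,r)$, the odd case being the cleaner one.
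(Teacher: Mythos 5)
Your argument is incomplete in an essential way, and the part you leave out is, as you yourself write, ``the crux of the proof.''

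The first half of your write-up is fine as far as it goes. Reducing the second assertion to the first by reversing the path is valid. The inequality chain $s\geq m-k+2$, then $m-j+1\leq 2\ell+1-i\leq 2\ell-s\leq 2\ell-m+k-2$, then $j\geq 2(m-k)+4$, then $p\leq k-2$ and hence $m\leq(3k-6)/2$, is correct (modulo trusting the elementary insertion of $x_1$ across $x_px_{p+1}$ into the cycle $x_{s+1}Px_mx_{s+1}$). But this only eliminates $m>(3k-6)/2$; it does not prove the claim, and when $k$ is large the surviving window $k\leq m\leq(3k-6)/2$ is wide. For that window you offer only a plan (``exploit this rigidity either to find one further vertex that can be spliced in \ldots or to recognise $G[V(P)]$ as one of the graphs in $\mathcal F(m,k,r)$''). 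No such vertex is produced, no membership in $\mathcal F(m,k,r)$ is established, and neither route is even shown to be feasible. This is not a proof but a restatement of the difficulty, so the claim remains unproven.

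The actual proof in the paper is shorter and does not split on the size of $m$ at all. Having assumed $x_1$ adjacent to consecutive $x_q,x_{q+1}$ with $j\leq q\leq t-2$, it considers the rotated path $R=x_sPx_1x_{s+1}Px_m\in\mathcal P$ (your $P''$), shows $R$ must be a crossing path (else one of the cycles $x_1Px_ix_mPx_jx_1$ is already too long), and then invokes Claim~\ref{3structure} \emph{for $R$ with $x_s$ in the role of the start-vertex}. Since $x_q,x_{q+1}\in N_P[x_1]$ both lie in the ``allowed'' set $V(P)\setminus\bigl(\bigcup_{i=1}^{m-k+1}N^{+i}_P(x_m)\cup\{x_1\}\bigr)$, Claim~\ref{3structure} forces $x_s$ to be adjacent to at least one of $x_q,x_{q+1}$. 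Either choice yields a Hamilton cycle of $G[V(P)]$, namely $x_sx_qPx_{s+1}x_mPx_{q+1}x_1Px_s$ or $x_sx_{q+1}Px_mx_{s+1}Px_qx_1Px_s$, a cycle of length $m\geq k$, contradicting $c(G)<k$. This single application of Claim~\ref{3structure} to the rotated path is exactly the step your proposal gestures toward (``invoking Claims~\ref{3basiclemma}, \ref{3lengthofcp} and \ref{3structure} for $P''$'') but never actually executes. Without it, your argument does not close.
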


\begin{proof}
By symmetry between $x_1$ and $x_m$, we will prove the first statement.
Suppose to the contrary that $x_1$ is adjacent to $x_q$ and $x_{q+1}$ for some $j\leq q\leq t-2$.
We consider the path $R=x_sPx_1x_{s+1}Px_m$.
It follows from $x_s, x_m\in H$ that $R\in\mathcal{P}$.
By the maximality of $m$, we have $N_{H}[x_s]\subseteq V(R)$ and $N_{H}[x_m]\subseteq V(x_{s+1}Rx_m)$.
$R$ has a crossing pair, as otherwise we have $|V(x_1Px_s)|\geq|N_R[x_s]|\geq \ell+1$ and hence $x_1Px_ix_mPx_jx_1$ is a cycle of length at least $|V(x_1Px_{s+1})|+|N^+_R(x_m)\setminus\{x_{i+1}\}|+|\{x_q,x_{q+1}\}|=\ell+1+\ell-1+2\geq k$, a contradiction.
Note that $x_q,x_{q+1}\in  N_P[x_1] \subseteq V(P)\setminus \bigcup^{\theta}_{i=1}N^{+i}_P(x_m)$, where $ \theta=m-k+1$.
Thus by  Claim \ref{3structure}, $x_s$ must be adjacent to one of $x_q,x_{q+1}$.
Suppose that $x_s$ is adjacent to $x_q$.
Then $x_sx_qPx_{s+1}x_mPx_{q+1}x_1Px_s$ is a cycle of length $m$, a contradiction.
Therefore, $x_s$ is adjacent to $x_{q+1}$.
Then $x_sx_{q+1}Px_mx_{s+1}Px_qx_1Px_s$ is a cycle of length $m$, a contradiction.
\end{proof}

Now according to the parity of $k$, we divide the remaining proof into two subsections.
First, we consider the odd case, whose proof is comparably easier, yet revealing essential ideas of our arguments.

\subsection{$k$ is odd.}

In this subsection, we have $k=2\ell+1$.
By Claims \ref{3basiclemma} and \ref{3lengthofcp}, $N_H(x_1)=N_P(x_1)$ and $N_H(x_m)=N_P(x_m)$.

\begin{claim}\label{3oddcrossing}
There exists a crossing path in $\mathcal{P}$.
\end{claim}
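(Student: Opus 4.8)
The plan is to prove the stronger statement that \emph{every} $P\in\mathcal{P}$ is a crossing path; since $\mathcal{P}\neq\emptyset$ (the longest $H$-path has $m\ge k$ vertices by hypothesis), this immediately yields the claim. I would argue by contradiction: assuming some $P\in\mathcal{P}$ has no crossing pair, I translate this into a comparison of the extreme $P$-neighbours of the two end-vertices of $P$, and then apply P\'{o}sa's lemma to produce a cycle of length at least $k$.

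Concretely, fix $P=x_1x_2\cdots x_m\in\mathcal{P}$ and suppose $P$ has no crossing pair. By Claim~\ref{3basiclemma}$(\romannumeral2)$ we have $d_P(x_1)\ge\ell$ and $d_P(x_m)\ge\ell$. Let $i$ be the least index with $x_i\in N_P(x_m)$ and let $j$ be the greatest index with $x_j\in N_P(x_1)$. If $i<j$, then $(i,j)$ is by definition a crossing pair of $P$, contrary to assumption, so $i\ge j$. Now apply Lemma~\ref{posa lemma} to the path $P$ in the $2$-connected graph $G$: if $i=j$ we obtain a cycle of length at least $\min\{m,\,d_P(x_1)+d_P(x_m)+1\}\ge\min\{m,\,2\ell+1\}=k$, and if $j<i$ we obtain a cycle of length at least $\min\{m,\,d_P(x_1)+d_P(x_m)+2\}\ge\min\{m,\,2\ell+2\}\ge k$, both using $m\ge k=2\ell+1$. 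Either way this contradicts $c(G)<k$, so $P$ must contain a crossing pair.

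I do not expect a genuine obstacle here: the argument is essentially a one-line consequence of P\'{o}sa's lemma once the end-degree bound from Claim~\ref{3basiclemma}$(\romannumeral2)$ is available, and it uses neither the sharper equalities $N_H(x_1)=N_P(x_1)$, $N_H(x_m)=N_P(x_m)$ recorded above nor Claim~\ref{3lengthofcp}. The only points requiring (minimal) care are bookkeeping ones: choosing $i$ as the \emph{smallest}-index neighbour of $x_m$ and $j$ as the \emph{largest}-index neighbour of $x_1$, so that ``$i<j$'' is exactly the assertion that a crossing pair exists, and observing that the hypothesis $m\ge k$ forces both $\min\{m,2\ell+1\}$ and $\min\{m,2\ell+2\}$ to be at least $k$.
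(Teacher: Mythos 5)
Your argument is correct and is essentially the paper's own proof: the paper likewise assumes a non-crossing $P\in\mathcal{P}$, observes that the extremal $P$-neighbours of $x_1$ and $x_m$ satisfy the ``$j\le i$'' relation, and invokes P\'{o}sa's Lemma~\ref{posa lemma} together with the degree bound $d_P(x_1),d_P(x_m)\geq\ell$ from Claim~\ref{3basiclemma}$(\romannumeral2)$ to get a cycle of length at least $\min\{m,2\ell+1\}\geq k$. The only cosmetic difference is that you split into the $i=j$ and $j<i$ subcases of Lemma~\ref{posa lemma} explicitly, whereas the paper merges them by noting both give at least the $+1$ improvement.
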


\begin{proof}
Suppose to the contrary that all paths in $\mathcal{P}$ are non-crossing.
Let $P=x_1x_2\cdots x_m\in \mathcal{P}$.
Let $\alpha$ be the maximum integer such that $x_{\alpha}$ is adjacent to $x_1$ and $\beta$ be the minimum integer such that $x_{\beta}$ is adjacent to $x_m$.
Note that $\alpha\leq \beta$.
By Lemma \ref{posa lemma}, $G$ contains a cycle of length at least $\min\{m,2\ell+1\}\geq k$, a contradiction.
\end{proof}

By Claim \ref{3oddcrossing}, there is a crossing path $P\in \mathcal{P}$.
Let $(i_1,j_1)$ and $(i_2,j_2)$ be two minimal crossing pairs of $P$ such that $i_1$ is as small as possible and $j_2$ is as large as possible.\footnote{Note that it is possible that $(i_1,j_1)=(i_2,j_2)$.}

\begin{claim}\label{3oddunique}
$P$ has a unique minimal crossing pair $(i,j)$ with $j-i-1=m-k+1$ when $m\geq k+1$.
Moreover, if $m=k$, then each minimal crossing pair $(i^\prime,j^\prime)$ in $P$ satisfies that $j^\prime-i^\prime-1=1$.
\end{claim}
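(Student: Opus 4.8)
The plan is to handle the two parts separately; the length assertion essentially falls out of Claim~\ref{3lengthofcp}. Since $k=2\ell+1$ we have $m-2\ell=m-k+1$, so Claim~\ref{3lengthofcp}$(\romannumeral2)$ reads $m-k<j-i-1\le m-k+1$ for every minimal crossing pair $(i,j)$ of $P$, and since $j-i-1$ is an integer this forces $j-i-1=m-k+1$. When $m=k$ this is exactly the ``moreover'' statement $j'-i'-1=1$, and when $m\ge k+1$ it records $j-i-1\ge 2$, which will be the engine of the uniqueness argument.

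For uniqueness, assume $m\ge k+1$ and suppose for contradiction that the two extremal minimal crossing pairs $(i_1,j_1)$ and $(i_2,j_2)$ are distinct. Since the length $j-i-1=m-k+1$ is now constant, each minimal crossing pair is determined by its first coordinate (via $j=i+m-k+2$), so $i_1\ne i_2$, and the extremal choices force $i_1<i_2$ and $j_1<j_2$. The crucial input is the odd-case conclusion of Claim~\ref{3lengthofcp}$(\romannumeral3)$:
$$V(x_1Px_{i_t})\cup V(x_{j_t}Px_m)=U_{i_t}=V_{j_t}\qquad(t=1,2),$$
where $U_i=N_H[x_1]\cup(N_H^+(x_m)\setminus\{x_{i+1}\})$ and $V_j=N_H[x_m]\cup(N_H^-(x_1)\setminus\{x_{j-1}\})$. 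Each $U_{i_t}$ is obtained from the fixed set $N_H[x_1]\cup N_H^+(x_m)$ by removing at most the one vertex $x_{i_t+1}$, and likewise each $V_{j_t}$ from $N_H[x_m]\cup N_H^-(x_1)$ by removing at most $x_{j_t-1}$; hence $U_{i_1}\setminus U_{i_2}\subseteq\{x_{i_2+1}\}$, $U_{i_2}\setminus U_{i_1}\subseteq\{x_{i_1+1}\}$, $V_{j_1}\setminus V_{j_2}\subseteq\{x_{j_2-1}\}$, $V_{j_2}\setminus V_{j_1}\subseteq\{x_{j_1-1}\}$, and also $|U_{i_1}|=|U_{i_2}|=k-1$.

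The argument then splits in two. If $U_{i_1}=U_{i_2}$, then $x_{i_1+1}\in V(x_1Px_{i_2})\subseteq U_{i_2}=U_{i_1}=V(x_1Px_{i_1})\cup V(x_{j_1}Px_m)$, yet $i_1<i_1+1<j_1$ (using $j_1-i_1-1\ge 2$) puts $x_{i_1+1}$ in neither piece, a contradiction. If $U_{i_1}\ne U_{i_2}$, then since these sets have equal cardinality while each of $U_{i_1}\setminus U_{i_2}$ and $U_{i_2}\setminus U_{i_1}$ has at most one element, both differences have exactly one element, so $U_{i_1}\setminus U_{i_2}=\{x_{i_2+1}\}$; but $U_{i_1}\setminus U_{i_2}=V_{j_1}\setminus V_{j_2}\subseteq\{x_{j_2-1}\}$, forcing $x_{i_2+1}=x_{j_2-1}$, i.e.\ $j_2-i_2-1=1$, contradicting $j_2-i_2-1=m-k+1\ge 2$. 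Hence $(i_1,j_1)=(i_2,j_2)$; then any minimal crossing pair $(i,j)$ satisfies $i_1\le i$ and $j\le j_2=i_1+m-k+2$, so $i\le i_1$ and thus $i=i_1$, giving uniqueness.

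I do not foresee a conceptual obstacle. The substance is that Claim~\ref{3lengthofcp}$(\romannumeral3)$ pins the set $V(x_1Px_i)\cup V(x_jPx_m)$ down to within one vertex from two independent descriptions, which is rigid enough to forbid two distinct minimal crossing pairs once $m>k$. The only thing requiring care is keeping the directions straight in the set computations and making sure the strict inequality $j-i-1\ge 2$, available precisely because $m\ge k+1$, is what closes out both branches of the case split.
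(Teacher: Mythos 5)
Your proof is correct, and it takes a genuinely different route from the paper's. Both arguments start by extracting $j-i-1=m-k+1$ from Claim~\ref{3lengthofcp}$(\romannumeral2)$ exactly as you do. For uniqueness, however, the paper's proof stays in the ``exhibit a long cycle'' style of the whole lemma: assuming two distinct minimal crossing pairs $i_1<j_1\le i_2<j_2$, it observes that $V(x_{i_2+1}Px_{j_2-2})$ is nonempty, contained in the cycle $x_1Px_{i_1}x_mPx_{j_1}x_1$, and disjoint from $(N_P^-(x_1)\setminus\{x_{j_1-1}\})\cup N_P[x_m]$; since the latter has size at least $2\ell$ and also lies on that cycle, the cycle has at least $2\ell+1=k$ vertices, contradicting $c(G)<k$. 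You instead work purely set-theoretically with the rigidity in Claim~\ref{3lengthofcp}$(\romannumeral3)$: the equalities $U_{i_t}=V_{j_t}$ together with the facts that all four sets are one-vertex deletions of two fixed ambient sets and all have size $k-1$ pin things down so tightly that a short cardinality/case analysis already yields a contradiction. Both approaches are valid and hinge on Claim~\ref{3lengthofcp}; the paper's is shorter and mirrors the rest of the proof, whereas yours avoids producing an explicit long cycle and does not even need to record that distinct minimal crossing pairs satisfy $j_1\le i_2$. One small stylistic point: the paper states the claim for two arbitrary minimal crossing pairs, while you reduce to the two extremal pairs $(i_1,j_1)$ and $(i_2,j_2)$; your final sentence showing every minimal crossing pair then coincides with $(i_1,j_1)$ is the right patch and is correct.
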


\begin{proof}
Assume that $m\geq k+1$.
Suppose to the contrary that there exist two minimal crossing pairs in $P$, say $i_1<j_1 \leq i_2<j_2$.
By Claim \ref{3lengthofcp}$(\romannumeral2)$, we have that $j_1-i_1-1\geq 2$ and $j_2-i_2-1\geq 2$.
Since $V(x_{i_2+1}Px_{j_2-2})\cap ((N_P^-(x_1)\setminus\{x_{j_1-1}\})\cup N_P[x_{m}])=\emptyset$,
it follows that $x_1Px_{i_1}x_mPx_{j_1}x_1$ is a cycle of length at least $k$, a contradiction.
Let $m=k$.
Then by Claim \ref{3lengthofcp}$(\romannumeral2)$ again, each minimal crossing pair $(i^\prime,j^\prime)$ satisfies that $j^\prime-i^\prime-1=1$.
\end{proof}

\begin{claim}\label{3oddst}
$i_1=s+1$ and $j_2=t-1$.
\end{claim}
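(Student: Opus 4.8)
The plan is to show $i_1 = s+1$ first; the identity $j_2 = t-1$ will then follow by the symmetry between $x_1$ and $x_m$ (i.e.\ by reversing $P$). Recall that $s = s_P$ is the smallest index with $x_{s+1} \in N_P(x_m)$ and $(i_1,j_1)$ is the minimal crossing pair of $P$ with $i_1$ as small as possible. Since $(i_1,j_1)$ is a crossing pair, $x_{i_1} \in N_P(x_m)$, so by minimality of $s$ we get $i_1 \ge s+1$. Hence it suffices to rule out $i_1 > s+1$, i.e.\ to rule out $i_1 \ge s+2$.

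So suppose for contradiction that $i_1 \ge s+2$. The idea is to reroute: consider the path $R = x_s P x_1 x_{s+1} P x_m$, which is an $H$-path (both endpoints $x_s$ and $x_m$ lie in $V(H)$ by Claim~\ref{3basiclemma}$(\romannumeral1)$, or rather by the definition of $s$ together with $x_m \in V(H)$), of the same length $m$, hence $R \in \mathcal{P}$. On $R$ the vertex $x_{s+1}$ has become an endpoint's neighbour in a new way, and the old crossing structure of $P$ is largely preserved on the segment $x_{s+1} P x_m$. First I would argue that $R$ must itself be a crossing path: if not, then by Claim~\ref{3basiclemma}$(\romannumeral2)$ applied to $R$ we would have $|V(x_1 P x_s)| \ge |N_R[x_s]| \ge \ell+1$, and combining this with the crossing pair of $P$ on $x_{s+1} P x_m$ we could splice together a cycle of length at least $(\ell+1) + (\ell-1) + (\text{slack}) \ge k$, contradicting $c(G) < k$ — this is exactly the counting move used in the proof of Claim~\ref{3key}. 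Once $R$ is known to be a crossing path, I would locate a minimal crossing pair of $R$. Because $i_1 \ge s+2$, the vertex $x_{i_1-1} \in N_P^-(x_{i_1})$ sits strictly between $x_1$ (now an internal vertex of $R$) and the endpoint segment, and one checks that $R$ acquires a minimal crossing pair whose left index corresponds to a position in $x_{s+1}Px_{i_1}$ strictly smaller, after relabelling, than the one $(i_1,j_1)$ forced on $P$; this contradicts the choice of $P$ as having $i_1$ as small as possible over all crossing paths in $\mathcal{P}$. Alternatively, and perhaps more cleanly, one derives a direct contradiction with $c(G)<k$ by exhibiting a long cycle in $G$ using the edge $x_1 x_{s+1}$, the segment $x_1 P x_s$ of length $\ge \ell$ (since $x_s \in V(H)$ contributes $d_P(x_1) \ge \ell$ neighbours among $x_2,\dots,x_s$ once $i_1 \ge s+2$ squeezes them there), and the crossing edges at the $x_m$ end.

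The main obstacle will be the bookkeeping in the second half: correctly tracking how the minimal crossing pair of $P$ transforms under the rotation $P \mapsto R$, and making sure the cycle one splices together genuinely has length $\ge k$ rather than merely $\ge k-1$. The parity $k = 2\ell+1$ is what makes this work — it gives $d_P(x_1) + d_P(x_m) \ge 2\ell = k-1$, and the extra $+1$ or $+2$ from Lemma~\ref{posa lemma} (or from a well-chosen crossing edge) pushes past $k-1$. I would handle the two sub-cases $m = k$ and $m \ge k+1$ using Claim~\ref{3oddunique}: when $m \ge k+1$ the unique minimal crossing pair has the fixed length $m-k+1$, which rigidifies the picture and makes the contradiction transparent; when $m = k$ every minimal crossing pair has length $1$, so $j_1 = i_1 + 2$, and the rerouting argument becomes a short direct computation. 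Having established $i_1 = s+1$, applying the same argument to the reverse of $P$ — under which $s_P$ becomes $m - t_P$ and the role of $(i_1,j_1)$ is played by $(m+1-j_2,\, m+1-i_2)$ — yields $j_2 = t-1$.
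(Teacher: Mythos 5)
The paper's own proof of this claim is a short, local argument: it does not perform any rotation $P\mapsto R$ at all. Starting from a minimal crossing pair $(i,j)$ with $j<t-1$, it uses Claim~\ref{3lengthofcp}$(\romannumeral3)$ (for odd $k$, $V(x_1Px_i)\cup V(x_jPx_m)=U_i=V_j$) to place $x_s,x_{s+1}\in N_H(x_1)$, then invokes Claim~\ref{3key} to conclude $x_1\not\sim x_{t-2}$, then Claim~\ref{3lengthofcp}$(\romannumeral3)$ again to force $x_{t-3}\in N_P(x_m)$; this exhibits $(t-3,t-1)$ as a minimal crossing pair and gives $j_2=t-1$, with $i_1=s+1$ by symmetry. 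Your proposal is a genuinely different route (a P\'osa rotation $R=x_sPx_1x_{s+1}Px_m$ followed by an extremality argument), but it has a real gap.

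The gap is in the contradiction you aim for. You write that a crossing pair of $R$ with a smaller left index would ``contradict the choice of $P$ as having $i_1$ as small as possible over all crossing paths in $\mathcal{P}$.'' No such extremal choice is made. In the odd-$k$ subsection the paper picks $P$ to be \emph{some} crossing path in $\mathcal{P}$, and then $i_1$ is minimized only over the minimal crossing pairs \emph{of that fixed $P$}. Finding a crossing pair in a different path $R\in\mathcal{P}$ whose left index is smaller does not contradict anything about $P$. To make your plan rigorous you would either have to install this extra extremal hypothesis on $P$ from the start (and check it is available after Claim~\ref{3oddcrossing}), or you would have to close the argument with a concrete long cycle — which is precisely your ``alternative'' sentence, but that route is left entirely as a gesture, with no construction of a cycle of length $\geq k$ and no tracking of how the crossing structure survives the rotation. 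In particular, you never actually use Claim~\ref{3key}, which is the mechanism the paper relies on to rule out the troublesome adjacency $x_1\sim x_{t-2}$; without it (or an explicit cycle) the contradiction is not established. Your opening observation that $i_1\geq s+1$ is correct, and the symmetry reduction to a single endpoint is fine, but the core of the argument as written does not go through.
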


\begin{proof}
Let $(i,j)$ be a minimal crossing pair of $P$.
We may assume that $j<t-1$, since otherwise $j_2=t-1$.
By the choices of $s,t$, we have $x_{s+1}\in N_P(x_m)$ and $x_{t-1}\in N_P(x_1)$.
Since $k$ is odd, Claim \ref{3lengthofcp}$(\romannumeral3)$ gives us $x_s,x_{s+1}\in N_H(x_1)$.
Thus it follows from  Claim \ref{3key} that $x_1$ is not adjacent to $x_{t-2}$.
By Claim \ref{3lengthofcp}$(\romannumeral3)$ again, $x_m$ is adjacent to $x_{t-3}$.
Therefore, $(t-3,t-1)$ is a minimal crossing pair in $P$.
So $j_2=t-1$.
Similarly, we have $i_1=s+1$.
\end{proof}

Now we are ready to finish the proof of Lemma \ref{extend posa lemma} when $k$ is odd.
By Claim \ref{3lengthofcp}$(\romannumeral3)$, we have $V(x_1Px_s)\subseteq N_H[x_1]$ and $V(x_tPx_m)\subseteq  N_H[x_m]$.
In particular, this shows $x_s,x_t \in V(H)$.
Note that $x_{s+1}\in N_H(x_m)$ and $x_{t-1}\in N_H(x_1)$.
By Claim \ref{3key}, $x_1$ is not adjacent to consecutive vertices of $V(x_{j_1}Px_{t-1})$ and $ x_m$ is not adjacent to consecutive vertices of $V(x_{s+1}Px_{i_2})$.
By Claim~\ref{3oddst}, we derive that $i_1=s+1$ and $j_2=t-1$.
Let $A=V(x_1Px_s),\ B=V(x_tPx_m),\ C=\{x_{s+1},x_{s+3},\cdots,x_{t-3},x_{t-1}\}$ (if $(i_1,j_1)=(i_2,j_2)$, then $C=\{x_{s+1},x_{t-1}\}$) and $D=V(P)\setminus(A\cup B\cup C)$.
Combining the above arguments with Claims~\ref{3lengthofcp}$(\romannumeral3)$ and~\ref{3oddunique}, we have $N_H[x_1]=A\cup C$ and $N_H[x_m]= B\cup C$.
Consider the crossing paths $R_{\gamma}=x_{\gamma}Px_1x_{\gamma+1}Px_m$ for $2\leq \gamma\leq s$, it is clear that $R_{\gamma}\in \mathcal{P}$ is a crossing path.
By Claim~\ref{3structure}, the neighbors of $x_{\gamma}$ in $H$ are determined by the neighbors of $x_{m}$ in $R$, that is $N_H[x_\gamma]=N_H[x_1]$.
Similarly, $N_H[x_\lambda] =N_H[x_m]$ for $t\leq \lambda\leq m$.
Now it is straightforward to check that $G[V(P)]$ gives a copy in $\mathcal{F}(m,k,s)$ of Type \1, a contradiction.
This completes the proof of Lemma~\ref{extend posa lemma} for odd $k$.

\subsection{$k$ is even.}

In this subsection, we have $k=2\ell+2$.

\begin{claim}\label{3evencrossing}
There exists a crossing path in $\mathcal{P}$.
\end{claim}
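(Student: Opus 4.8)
The claim to prove is Claim~\ref{3evencrossing}: if $k=2\ell+2$ is even, then there exists a crossing path in $\mathcal{P}$.

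Let me think about how I would prove this.We argue by contradiction: assume that no path in $\mathcal P$ is crossing. The plan is to first squeeze the argument of Claim~\ref{3oddcrossing} for all it is worth in the even setting. Fix $P=x_1\cdots x_m\in\mathcal P$; let $\alpha$ be the largest index with $x_\alpha\in N_P(x_1)$ and $\beta$ the smallest with $x_\beta\in N_P(x_m)$, so that $\alpha\le\beta$ because $P$ is non-crossing, and $d_P(x_1),d_P(x_m)\ge\ell$ by Claim~\ref{3basiclemma}$(\romannumeral2)$. By Lemma~\ref{posa lemma}: if $\alpha<\beta$ then $G$ has a cycle of length $\ge\min\{m,d_P(x_1)+d_P(x_m)+2\}\ge\min\{m,2\ell+2\}=k$; and if $d_P(x_1)+d_P(x_m)\ge 2\ell+1$ then, using the ``$j=i$'' clause when $\alpha=\beta$, $G$ has a cycle of length $\ge\min\{m,2\ell+2\}=k$. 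Both contradict $c(G)<k$. Hence every $P\in\mathcal P$ satisfies $\alpha=\beta$ and $d_P(x_1)=d_P(x_m)=\ell$. Writing $x_p:=x_\alpha=x_\beta$, this vertex is the unique common neighbour of $x_1$ and $x_m$ and is simultaneously the last neighbour of $x_1$ and the first neighbour of $x_m$ along $P$; moreover $N_H(x_1)\subseteq N_P(x_1)$ together with $|N_H(x_1)|\ge\ell=|N_P(x_1)|$ forces $N_P(x_1)=N_H(x_1)\subseteq V(H)$, and likewise $N_P(x_m)=N_H(x_m)\subseteq V(H)$; in particular $x_2,x_p,x_{m-1}\in V(H)$, $p\ge\ell+1$ and $m-p\ge\ell$.

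Next I would run a rotation argument that stays inside $V(H)$. The basic move is: if $x_a,x_{a+1},\dots,x_b$ is a maximal block of consecutive neighbours of $x_1$ on $P$ with $b>a$, then the rotated path $R:=x_{b-1}x_{b-2}\cdots x_1x_bx_{b+1}\cdots x_m$ again lies in $\mathcal P$, since its new end-vertex $x_{b-1}$ lies in that block, hence in $N_P(x_1)=N_H(x_1)\subseteq V(H)$; the symmetric move applies to blocks of $N_P(x_m)$. Feeding each such $R$ back into the first paragraph pins down the neighbourhoods of the interior vertices (each reachable end-vertex again has degree exactly $\ell$ on $P$, lies in $V(H)$, and has a pivot at the same position $p$), and iterating the move along blocks one obtains a clique-like core on $\{x_1,\dots,x_p\}$ with the property that every vertex of $\{x_1,\dots,x_p\}\setminus\{x_p\}$ has all of its $V(H)$-neighbours inside $\{x_1,\dots,x_p\}$, and symmetrically on the $x_m$ side. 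Since $m\ge k=2\ell+2$, the two sides cannot both be of minimum size, i.e. $p\ge\ell+2$ or $m-p\ge\ell+1$; this supplies the ``slack'' needed in the last step.

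Finally, I would derive a contradiction from this rigid configuration. If $V(G)=V(P)$, then $x_p$ is a cut-vertex of $G$ separating $\{x_1,\dots,x_p\}\setminus\{x_p\}$ from $\{x_{p+1},\dots,x_m\}$, contradicting $2$-connectivity. Otherwise $G$ has a vertex outside $V(P)$, and $2$-connectivity produces two internally disjoint paths joining the right part $x_px_{p+1}\cdots x_m$ of $P$ to the clique core on the left while avoiding $x_p$; splicing these with a Hamilton path of the clique core and with the relevant segment of $P$ on the longer (``slack'') side yields a cycle of length at least $(\ell+1)+(\ell+1)=2\ell+2=k$, again contradicting $c(G)<k$. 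Alternatively, tracing the chain of rotations, one of the paths produced along the way already turns out to be crossing, which contradicts the standing assumption directly. Either way the assumption is untenable and the claim follows.

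I expect the main obstacle to be the middle step: propagating the rigid ``$\alpha=\beta$, degree-$\ell$'' structure through a sequence of rotations while certifying at each step that the new end-vertex stays in $V(H)$, and extracting from it a description of $G[V(P)]$ clean enough for the concluding $2$-connectivity argument to bite. The first and last steps, by contrast, are short.
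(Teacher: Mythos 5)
Your first paragraph is sound and parallels the paper: from non-crossing and the three cases of Lemma~\ref{posa lemma} you correctly force $\alpha=\beta$ and $d_P(x_1)=d_P(x_m)=\ell$, and then $N_P(x_1)=N_H(x_1)$, $N_P(x_m)=N_H(x_m)$. After that, however, your plan diverges from the paper in a way that cannot be repaired, because it aims at the wrong contradiction.

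The decisive issue is the endgame. You try to contradict either 2-connectivity (the cut-vertex option) or $c(G)<k$ (the splicing option). Neither is reachable, because the non-crossing configuration is genuinely realizable by a 2-connected graph of circumference less than $k$: it is exactly the Type~\4 graph $F\in\mathcal{F}(m,k,\ell)$ (with $w=x_\alpha$, $w_1w_2$ the edge playing the role of the connector $Q$). In particular, your cut-vertex step fails for a concrete reason, not just lack of detail: 2-connectivity \emph{guarantees} a path $Q$ between the two sides of $P$ that avoids $x_\alpha$, and the paper's proof explicitly constructs and exploits this $Q$. Your ``clique-like core whose $V(H)$-neighbours stay on the left'' does not preclude such a $Q$ (it only concerns $V(H)$-neighbours of rotated end-vertices), so $x_\alpha$ need not be a cut vertex even when $V(G)=V(P)$. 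Likewise, your splicing estimate ``$(\ell+1)+(\ell+1)=2\ell+2=k$'' overshoots: the cycle you can actually build by routing through $Q$ and picking up all of $N_P[x_1]\cup N_P[x_m]$ has length exactly $k-1$ (this is how the paper pins down $Q=x_ux_v$, $d_H(x_1)=d_H(x_m)=\ell$, and the rest of the structure), so no contradiction to $c(G)<k$ arises this way. Your third fallback (``one of the rotated paths is already crossing'') is precisely what the paper's auxiliary paths $L$ and $M$ accomplish, but only after the $C_0$-analysis has nailed down $p=q=\alpha$ and the possible positions of $u,v$; without that structural information the fallback is not a proof.

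What is actually needed, and what the paper does, is to conclude that $G[V(P)]$ contains a copy of a Type~\4 graph in $\mathcal{F}(m,k,\ell)$ and then invoke the \emph{standing hypothesis} of the proof of Lemma~\ref{extend posa lemma} (that $G$ contains no subgraph in $\mathcal{F}(m,k,r)$ with $r\le\ell$) to get the contradiction. Your proposal never appeals to that hypothesis; since the claim is false without it, the gap is essential rather than cosmetic. To fix the argument, replace the cut-vertex/splicing endgame with: use 2-connectivity to produce $Q$ with $V(Q)\cap V(P)=\{x_u,x_v\}$, $u<\alpha<v$; show the cycle $C_0$ through $Q$ and $N_P[x_1]\cup N_P[x_m]$ has length exactly $k-1$, forcing $Q=x_ux_v$ and the stated neighbourhood structure; use the rotated paths $R_\gamma$, $L$, $M$ to show $p=q=\alpha$, $u<\alpha-1$, $v>\alpha+1$; and finally read off the Type~\4 subgraph, contradicting the hypothesis that $G$ avoids $\mathcal{F}(m,k,\ell)$.
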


\begin{proof}
Suppose to the contrary that all paths in $\mathcal{P}$ are non-crossing.
Let $P=x_1x_2\cdots x_m\in \mathcal{P}$.
Let $\alpha$ be the maximum integer such that $x_{\alpha}$ is adjacent to $x_1$ and $\beta$ be the minimum integer such that $x_{\beta}$ is adjacent to $x_m$.
Note that $\alpha\leq \beta$.

If $\alpha<\beta$, then by Lemma \ref{posa lemma}, $G$ contains a cycle of length at least $\min\{m,2\ell+2\}\geq k$, a contradiction.
Therefore, $\alpha=\beta$.
Since $G$ is 2-connected, there exists a path $Q$ in $G$ with $V(Q)\cap V(P)=\{x_u,x_v\}$ for $1\leq u<\alpha<v\leq m$.
Let $p=\min\{h:h>u,x_h\in N_P(x_1)\}$ and $q=\max\{h:h<v,x_h\in N_P(x_{m})\}.$
Then $C_0=x_1Px_uQx_vPx_{m}x_qPx_px_1$ is a cycle containing $N_P[x_1]\cup N_P[x_{m}]$.
By Claim \ref{3basiclemma}, $C_0$ has length at least $k-1$.
Note that $c(G)<k$.
This forces that $C_0$ has length $k-1$.
It follows that $d_H(x_1)=d_H(x_m)=\ell,\ N_H(x_1)=V(x_2Px_u)\cup V(x_pPx_{\alpha}),\ N_H(x_m)=V(x_{\alpha}Px_q)\cup V(x_vPx_{m-1}),\ V(C)=N_H[x_1]\cup N_H[x_{m}],$ and $Q=x_ux_v$.

For any $2\leq \gamma\leq u-1$, we consider the path $R_{\gamma}=x_{\gamma}Px_1x_{\gamma+1}Px_m$.
Clearly, $R_{\gamma}\in \mathcal{P}$.
Also, by our assumption, $R_{\gamma}$ is non-crossing.
It follows that $N_H[x_{\gamma}]\subseteq V(x_1Px_{\alpha})$.
Suppose that $x_{\gamma}$ has a neighbor $y$ in $V(x_{u+1}Px_{p-1})$.
Then $x_{\gamma}Px_1x_{\gamma+1}Px_uQx_vPx_mx_qPyx_{\gamma}$ is a cycle of length at least $k+1$, a contradiction.

Therefore, we have that $N_H[x_{\gamma}]= N_H[x_1]$ for any $2\leq \gamma\leq u-1$.
Suppose that $p<\alpha$ or $q>\alpha$.
By symmetry, we may assume that $p<\alpha$.
Then we have that $x_{\alpha-1}\in N_P(x_1)$.
Now, we consider the path $L=x_uPx_{\alpha-1}x_1Px_{u-1}x_{\alpha}Px_m$.
Clearly, $L\in\mathcal{P}$.
Note that $x_v\in N_{L}(x_u),\ x_{\alpha}\in N_{L}(x_m)$ and $x_{\alpha}$ precedes $x_v$ in $L$.
It follows that $L$ is a crossing path in $\mathcal{P}$, a contradiction.

The last paragraph implies that $p=\alpha$ and $q=\alpha$.
Suppose that $u=\alpha-1$ or $v=\alpha+1$.
By symmetry, we may assume that $x_{\alpha}x_u\in E(P)$.
Now we consider the path $M=x_uPx_1x_{\alpha}Px_m$.
Clearly, $M\in\mathcal{P}$.
Note that $x_v\in N_{M}(x_u),\ x_{\alpha}\in N_{M}(x_m)$ and $x_{\alpha}$ precedes $x_v$ in $M$.
It follows that $M$ is a crossing path in $\mathcal{P}$, a contradiction.

Thus, we may suppose that $u<\alpha-1$ and $v>\alpha+1$.
Let $A=V(x_1Px_u),\ B=V(x_vPx_m)$ and $C=V(P)\setminus(A\cup B)$.
It is easy to check that $G[V(P)]$ gives a copy in $\mathcal{F}(m,k,\ell)$ of Type \4 (with $k=2\ell+2$, $w=x_\alpha$ and $\{w_1,w_2\}=\{x_u,x_v\}$), a contradiction.
\end{proof}

Let $P\in \mathcal{P}$ be a longest $H$-path with as many minimal crossing pairs as possible and subject to this, let $(i,j)$ be a minimal crossing pair of $P$ with largest length.
Let $(i_1,j_1)$ and $(i_2,j_2)$ be two minimal crossing pairs of $P$ such that $i_1$ is as small as possible and $j_2$ is as large as possible.

\begin{claim}\label{3evenunique}
There is a unique minimal crossing pair in $P$ when $m\geq k+2$. In particular, there are at most two minimal crossing pairs in $P$ when $m=k+1$. Moreover, for $m=k$, each minimal crossing pair $(i^\prime,j^\prime)\neq (i,j)$ in $P$ satisfies $j^\prime-i^\prime=2$.
\end{claim}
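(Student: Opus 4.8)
The plan is to mimic the argument in Claim~\ref{3oddunique}, but now I must account for the parity: when $k$ is even, Claim~\ref{3lengthofcp}$(\romannumeral2)$ only gives $m-k<j-i-1\leq m-2\ell=m-k+2$, so a minimal crossing pair has length in $\{m-k+1,\ m-k+2\}$ (two possibilities rather than one), and this is what forces the weaker conclusions in the three regimes. First I would suppose, for contradiction, that $P$ has two distinct minimal crossing pairs, say $(i_1,j_1)$ and $(i_2,j_2)$ with $i_1<j_1\leq i_2<j_2$ (they are vertex-disjoint on their interiors by minimality, so either they are nested — impossible since each interior avoids $N_P(x_1)\cup N_P(x_m)$ — or they are ``side by side'' in this order). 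Then I would exhibit a long cycle: the cycle $x_1Px_{i_1}x_mPx_{j_1}x_1$ avoids precisely the interior of the crossing pair $(i_1,j_1)$, i.e.\ has length $m-(j_1-i_1-1)$. Since $V(x_{i_2+1}Px_{j_2-1})$ lies outside $N_P^-(x_1)\cup N_P^+(x_m)\cup N_P[x_1]\cup N_P[x_m]$ except possibly for the endpoints forced by the second crossing pair, I can argue both crossing pairs cannot simultaneously have the maximal length $m-k+2$ unless $m$ is small; more precisely, $j_1-i_1-1\geq m-k+1$ combined with the existence of a second disjoint minimal crossing pair of length $\geq m-k+1$ would push $|V(x_1Px_{i_1})\cup V(x_{j_1}Px_m)|$ below the threshold $2\ell$ guaranteed by Claim~\ref{3lengthofcp}$(\romannumeral2)$ once $m\geq k+2$, giving the contradiction. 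This yields uniqueness for $m\geq k+2$.

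For $m=k+1$: here a minimal crossing pair has length $2$ or $3$ (i.e.\ $j-i-1\in\{2,3\}$). Two disjoint minimal crossing pairs would together remove at least $4$ interior vertices; combined with the $\geq 2\ell=k-2=m-3$ vertices that $U_{i_1}\cup\{x_{i_1+1}\}$ etc.\ must occupy outside, a short counting argument (as in the $m\geq k+2$ case, but now the inequality is tight rather than violated) shows there is no room for a third minimal crossing pair, so there are at most two. For $m=k$: a minimal crossing pair has length $j-i-1\in\{1,2\}$ by Claim~\ref{3lengthofcp}$(\romannumeral2)$. The pair $(i,j)$ was chosen to have largest length among minimal crossing pairs of $P$. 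If some minimal crossing pair $(i',j')\neq(i,j)$ had $j'-i'=2$, i.e.\ length $1$, that is exactly the claimed conclusion; so I must rule out $j'-i'-1=2$ for $(i',j')\neq(i,j)$, which again follows because the chosen $(i,j)$ has maximal length, so any length-$2$ minimal crossing pair would have to be $(i,j)$ itself — hence every \emph{other} minimal crossing pair has $j'-i'=2$ (length $1$). Wait: this needs care; the statement says each $(i',j')\neq(i,j)$ satisfies $j'-i'=2$, i.e.\ has the \emph{smaller} length — which is consistent with $(i,j)$ being of largest length. So the argument is: for $m=k$, lengths are in $\{1,2\}$; $(i,j)$ has the largest one, and I must show no \emph{other} pair attains the large length $2$ (length $j'-i'-1=2$ would mean $j'-i'=3$; re-examine against the claim's ``$j'-i'=2$''). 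I should recheck the indexing convention here, since ``length'' is $j-i-1$ and the claim writes $j'-i'=2$, meaning length $1$.

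The main obstacle I anticipate is the $m=k$ and $m=k+1$ boundary cases, where the counting inequalities from Claim~\ref{3lengthofcp} become equalities and no longer directly yield contradictions; there I will need to invoke the \emph{choice} of $P$ (maximizing the number of minimal crossing pairs) and of $(i,j)$ (maximal length), together with Claim~\ref{3key} and Claim~\ref{3structure}, to rule out the extra configurations by rerouting along a crossing path $R_\gamma$ or $R=x_sPx_1x_{s+1}Px_m$ and producing either a cycle of length $\geq k$ or a longer $H$-path. Getting the bookkeeping of which vertices are forced into $A\cup B$ versus the interior of a crossing pair exactly right, in all the sub-cases dictated by whether $d_H(x_1)+d_H(x_m)$ equals $2\ell$ or $2\ell+1$, will be the delicate part.
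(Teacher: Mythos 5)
You are on the right track globally: the cycle to exploit is indeed $x_1Px_{i_1}x_mPx_{j_1}x_1$, and the whole claim is proved in the paper by a pure counting argument on top of Claim~\ref{3lengthofcp}, exactly as in Claim~\ref{3oddunique}. But two things in your write-up are off.

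First, the direction of the inequality in the $m\geq k+2$ case is inverted. The quantity $|V(x_1Px_{i_1})\cup V(x_{j_1}Px_m)|=m-(j_1-i_1-1)$ is \emph{never} pushed below $2\ell$: Claim~\ref{3lengthofcp}$(\romannumeral2)$ guarantees it is $\geq 2\ell$, and the presence of a second minimal crossing pair does not change this count at all (the interior of $(i_2,j_2)$ already sits inside that union). The actual contradiction runs the other way: the vertices of $V(x_{i_2+1}Px_{j_2-2})$ are contained in $V(x_{j_1}Px_m)$, are disjoint from $V_{j_1}$ (minimality of $(i_2,j_2)$ keeps them out of $N_P^-(x_1)\cup N_P[x_m]$), and number $j_2-i_2-2\geq m-k\geq 2$. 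So either you conclude the cycle $x_1Px_{i_1}x_mPx_{j_1}x_1$ has length $\geq |V_{j_1}|+2\geq 2\ell+2=k$, or equivalently you violate the bound $|V(x_1Px_{i_1})\cup V(x_{j_1}Px_m)\setminus V_{j_1}|\leq 1$ from Claim~\ref{3lengthofcp}$(\romannumeral3)$. Either way the union is \emph{too big}, not too small.

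Second, and more importantly, your $m=k$ case has a genuine gap that you flag but do not close. You argue ``any length-$2$ minimal crossing pair would have to be $(i,j)$ itself'' because $(i,j)$ has maximal length — but maximality of length does not give uniqueness; two distinct minimal crossing pairs could both have length $2$. What actually rules this out is the equality case of Claim~\ref{3lengthofcp}$(\romannumeral3)$: when $j-i-1=2=m-2\ell$, one gets $V(x_1Px_i)\cup V(x_jPx_m)=U_i=V_j$ exactly, so \emph{no} vertex of that union lies outside $V_j$. A second minimal crossing pair $(i',j')$ with $j'-i'-1=2$ would contribute the (single) interior vertex $x_{i'+1}\in V(x_{i'+1}Px_{j'-2})$, which lies in the union but outside $V_j$ — a contradiction. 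That is the missing step. Finally, note that you anticipate invoking Claims~\ref{3structure} and~\ref{3key} and rerouting paths; the paper's proof of this claim does not need any of that — Claim~\ref{3lengthofcp} alone suffices — which is a sign the counting argument has not been fully worked through.
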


\begin{proof}
Assume that $m\geq k+2$.
Suppose to the contrary that there exist two minimal crossing pairs in $P$, that is $i_1<j_1 \leq i_2<j_2$.
By Claim \ref{3lengthofcp}$(\romannumeral2)$, we have that $j_1-i_1-1\geq 3$ and $j_2-i_2-1\geq 3$.
Note that $V(x_{i_2+1}Px_{j_2-2})\cap ((N_P^-(x_1)\setminus\{x_{j_1-1}\})\cup N_P[x_{m}])=\emptyset$.
It follows that $x_1Px_{i_1}x_mPx_{j_1}x_1$ is a cycle of length at least $k$, a contradiction.

Assume that $m=k+1$.
Suppose to the contrary that there exist three minimal crossing pairs $(\alpha_1,\beta_1),(\alpha_2,\beta_2)$ and $(\alpha_3,\beta_3)$ in $P$.
Without loss of generality, we may assume that $\alpha_1<\beta_1 \leq \alpha_2<\beta_2 \leq \alpha_3<\beta_3$.
Note that $(V(x_{\alpha_2+1}Px_{\beta_2-2})\cup V(x_{\alpha_3+1}Px_{\beta_3-2}))\cap ((N_P^-(x_1)\setminus\{x_{\alpha_1-1}\})\cup N_P[x_{m}])=\emptyset$.
Then $x_1Px_{\alpha_1}x_mPx_{\beta_1}x_1$ is a cycle of length at least $k$, a contradiction.

Therefore, $m=k$.
By Claim \ref{3lengthofcp}$(\romannumeral2)$, we have that $j-i-1=1$ or $2$.
We may assume that $j-i-1=2$, since otherwise the result follows by the choice of $(i,j)$.
Hence Claim \ref{3lengthofcp}$(\romannumeral3)$ implies $V(x_1Px_i)\cup V(x_jPx_{m})=U_i=V_j$.
Suppose to the contrary that there exists a minimal crossing pair $(i',j')$ other than $(i,j)$ in $P$ with $j'-i'-1=2$.
It is clearly that $V(x_{i'+1}Px_{j'-2})\cap ((N_P^-(x_1)\setminus\{x_{j-1}\})\cup N_P[x_{m}])=\emptyset$, contradicting $V(x_1Px_i)\cup V(x_jPx_{m})=U_i=V_j$.
\end{proof}

There are two possibilities for the size of $m$: $m\geq k+1$ or $m=k$.
We now split the rest of the proof into two cases based on these two possibilities.

\subsubsection{$m\geq k+1$.}\label{mgeqk+1}

\begin{claim}\label{3evenst}
$i_1=s+1$ and $j_2=t-1$.
\end{claim}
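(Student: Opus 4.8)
The plan is to mirror the structure of Claim~\ref{3oddst}, which handled the odd case, but now being careful about the parity-sensitive statements in Claim~\ref{3lengthofcp}$(\romannumeral3)$ and Claim~\ref{3structure}. I will prove $j_2 = t-1$; the equality $i_1 = s+1$ follows by the symmetry between $x_1$ and $x_m$. First I would recall that by the choices of $s$ and $t$ we have $x_{s+1}\in N_P(x_m)$ and $x_{t-1}\in N_P(x_1)$, so in particular $(t$ plays the role on the $x_1$-side that $s$ plays on the $x_m$-side. Let $(i,j)$ be a minimal crossing pair of $P$; by definition $j\le j_2\le t-1$, so it suffices to show that if $j < t-1$ then there is a minimal crossing pair ending at $x_{t-1}$, which will force $j_2 = t-1$.

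So assume $j \le j_2 < t-1$. Since $m\ge k+1$, by Claim~\ref{3evenunique} there are at most two minimal crossing pairs; I would want to locate the vertices $x_s, x_{s+1}$ and $x_{t-2}, x_{t-1}$ relative to the partition $V(x_1Px_i)\cup V(x_jPx_m) = U_i = V_j$ coming from Claim~\ref{3lengthofcp}$(\romannumeral3)$ (here I should first argue we are in the case $V(x_1Px_i)\cup V(x_jPx_m)=U_i=V_j$, or handle the at-most-one-exception case separately). The key step is to apply Claim~\ref{3key}: I need $x_s\in V(H)$ and $x_{s+1}\in N_P(x_1)$ in order to conclude that $x_1$ is not adjacent to two consecutive vertices of $x_jPx_{t-1}$. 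The membership $x_s\in V(H)$ should come from Claim~\ref{3lengthofcp}$(\romannumeral1)$ together with the fact that the initial segment up to $x_s$ lies inside $N_H[x_1]$ (note $x_{s+1}\in N_H^+(x_m)$ by Claim~\ref{3basiclemma}$(\romannumeral1)$, so $x_s\in N_H^+(x_m)^- $, hence $x_s\in V(x_1Px_i)$, and a short argument gives $x_s\in N_H[x_1]\subseteq V(H)$). Getting $x_{s+1}\in N_P(x_1)$ is the genuinely delicate point: in the odd case this came for free from Claim~\ref{3lengthofcp}$(\romannumeral3)$ giving $x_s,x_{s+1}\in N_H(x_1)$, but when $k$ is even the $(\romannumeral3)$ statement only yields an equality $U_i = V_j$ conditionally, and may allow one exceptional vertex. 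I expect this is where the bulk of the case analysis lives.

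Granting $x_s\in V(H)$ and $x_{s+1}\in N_P(x_1)$, Claim~\ref{3key} tells us $x_1$ misses one of $x_{t-2}, x_{t-1}$ among any two consecutive vertices in $x_jPx_{t-1}$; since $x_{t-1}\in N_P(x_1)$ by the choice of $t$, this forces $x_{t-2}\notin N_P(x_1)$. Then I would invoke Claim~\ref{3structure} (applied to the crossing path $P$ with $x_1$ in the role of the reference end-vertex): since $x_{t-2}\notin N_P[x_1]$ and $x_{t-2}$ is not among the forbidden block $\bigcup_{i=1}^{m-k+1}N_P^{+i}(x_m)$ — which I should verify using that $x_{t-2}$ lies in the ``$U_i$'' side, i.e.\ in $V(x_1Px_i)\cup V(x_jPx_m)$ and specifically on the $x_1$-reachable part — the structure statement says $x_1$ is adjacent to all but at most one vertex there, so $x_{t-2}$ is the unique exceptional non-neighbour of $x_1$. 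Finally, using Claim~\ref{3lengthofcp}$(\romannumeral3)$ (the $U_i = V_j$ identification, in the even case paying attention to whether $d_H(x_1)+d_H(x_m)=2\ell+1$ or the length equals $m-2\ell$) together with $x_{t-2}\notin N_P(x_1)$, I would deduce $x_{t-3}\in N_P(x_m)$, so that $(t-3, t-1)$ is a crossing pair; minimality of it (no vertex of $N_P(x_1)\cup N_P(x_m)$ strictly between $x_{t-3}$ and $x_{t-1}$) follows because $x_{t-2}$ is in neither neighbourhood. Hence $j_2\ge t-1$, and combined with $j_2\le t-1$ we get $j_2 = t-1$.

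The main obstacle I anticipate is the even-parity weakening of Claim~\ref{3lengthofcp}$(\romannumeral3)$: unlike the odd case, one cannot immediately assert $x_s, x_{s+1}\in N_H(x_1)$, so establishing the hypotheses of Claim~\ref{3key} (in particular $x_{s+1}\in N_P(x_1)$ and $x_s\in V(H)$) requires threading through the at-most-one-exceptional-vertex possibility, likely splitting according to whether $j-i-1 = m-2\ell$ or $m-2\ell-1$ and whether $d_H(x_1)+d_H(x_m)$ equals $2\ell$ or $2\ell+1$. I would organize this as a short preliminary observation pinning down exactly where the (at most one) exceptional vertex can sit, and then run the Claim~\ref{3key}–Claim~\ref{3structure} argument uniformly in the remaining cases.
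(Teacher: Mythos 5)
The high-level plan---use Claim~\ref{3key} plus Claim~\ref{3structure} to force $x_{t-2}\notin N_P(x_1)$ and $x_{t-3}\in N_P(x_m)$---does capture the mechanism of the easy cases (the two--minimal-crossing-pair case with $m=k+1$, and the one--minimal-crossing-pair case when $d_H(x_1)+d_H(x_m)\geq 2\ell+1$ or $m-(j-i-1)=2\ell$), which the paper indeed dispatches ``similarly as the case when $k$ is odd.'' But you have correctly located and then not filled the genuine gap, namely the subcase $m-(j-i-1)=2\ell+1$ and $d_H(x_1)=d_H(x_m)=\ell$, and for that subcase your two structural shortcuts both fail.

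First, the symmetry claim ``$i_1=s+1$ follows by symmetry from $j_2=t-1$'' does not survive into the hard subcase. There, an exceptional vertex $x_p\notin N_H(x_1)\cup N_H^+(x_m)$ exists, and one must fix WLOG $1\leq p\leq i$ before anything can be said; once that choice is made the two ends of $P$ are no longer symmetric. The paper accordingly proves $i=s+1$ and $j=t-1$ by two separate arguments (with different auxiliary paths for each), and both are needed.

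Second, ``pin down where the exceptional vertex sits and run the Claim~\ref{3key}--Claim~\ref{3structure} argument uniformly'' is a substantial underestimate of what the hard subcase requires. The paper's proof there introduces a sequence of auxiliary crossing paths $R_1=x_tPx_mx_{t-1}Px_1$, $R_3=x_{i-2}Px_1x_{i-1}Px_m$, $R_4=yx_{i-2}Px_1x_iPx_m$ and, crucially, $R_5=x_{i-2}Px_1x_iPx_m$. The last one has only $m-1$ vertices, so it is \emph{not} a member of $\mathcal{P}$; one cannot appeal to Claim~\ref{3basiclemma} or Claim~\ref{3lengthofcp} for it, and one must instead verify directly that $d_{R_5}(x_{i-2})\geq\ell$ and $d_{R_5}(x_m)\geq\ell$ before Claim~\ref{3structure} applies. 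This trick of applying Claim~\ref{3structure} to a non-maximal path is precisely what makes subcase (b) ($p=i-1$) close, and it is absent from your sketch. Relatedly, your closing move---produce the crossing pair $(t-3,t-1)$ and obtain $j_2\geq t-1$---matches the two--crossing-pair branch, but in the unique-crossing-pair branch the paper's target is different (rule out $x_{j+1}\in N_P(x_1)$ directly, via cases $p=i$ and $p=i-1$, then conclude $j=t-1$ from minimality), so the argument structure diverges rather than runs ``uniformly.'' In short, the easy cases of your outline are correct, but the hard subcase is an acknowledged placeholder, and the ideas needed to fill it (asymmetric treatment of the two ends, and the $R_5$ device on a shorter path) are not in your proposal.
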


\begin{proof}
By Claim \ref{3evenunique}, there are at most two minimal crossing pairs in $P$.
Suppose that there are two minimal crossing pairs $(i_1,j_1),(i_2,j_2)$ in $P$.
By Claim \ref{3evenunique} again, we have that $m=k+1$.
By Claims \ref{3basiclemma}, \ref{3lengthofcp} and the choices of $s,t$, we have that $j_1-i_1-1=j_2-i_2-1=2$, $x_s,x_{s+1}\in N_H(x_1)$ and $x_{t-1},x_t\in N_H(x_m)$.
Assume that $j_2<t-1$.
Note that $x_{s}\in H$.
It follows from Claim \ref{3key} that $x_1$ is not adjacent to $x_{t-2}$.
By Claim \ref{3lengthofcp}, we have that $x_m$ is adjacent to $x_{t-3}$.
Therefore, $(t-3,t-1)$ is a minimal crossing pair in $P$, contradicting that there are two minimal crossing pairs.
Hence, we have that $j_2=t-1$.
Similarly, we have that $i_1=s+1$.

Thus, we may assume that there is a unique minimal crossing pair $(i,j)$ in $P$.
We will show that $i=s+1$ and $j=t-1$.
Suppose that $d_H(x_1)+d_H(x_m)\geq 2\ell+1$ or $m-(j-i-1)=2\ell$.
Then, similarly as the case when $k$ is odd, Claims~\ref{3basiclemma}$(\romannumeral3)$ and \ref{3key} imply that $i=s+1$ and $j=t-1$.

Now, suppose that $m-(j-i-1)=2\ell+1$ and $d_H(x_1)+d_H(x_m)=2\ell$.
Then there exists a vertex $x_p\in V(x_1Px_i)\cup V(x_jPx_m)$ such that $x_p\notin N_H(x_1)\cup N_H^+(x_m)$.
By symmetry between $x_1$ and $x_m$, we may assume that $1\leq p\leq i$.
Suppose to the contrary that $i>s+1$.
Since $x_t,x_{t-1}\in N_H(x_m)$, by Claim \ref{3key}, we have that $x_{i-1}\notin N_P(x_m)$.
Note that there is only one minimal crossing pair in $P$.
It follows that $V(x_{s+2}Px_{i})\cap N_P(x_1)=\emptyset$.
This forces that $x_{i-2}\in N_P(x_m)$, $s=i-3$ and $p=i$.
Hence, $x_s, x_{s+1}\in N_H(x_1)$.
By Claim \ref{3key} and $p= i$, we can easily deduce that $V(x_jPx_{m-1})\subseteq N_H(x_m)$.
We consider the path $R_1=x_tPx_mx_{t-1}Px_1$.
Clearly, $R_1\in \mathcal{P}$ is a crossing path, and hence by Claim \ref{3structure}, $x_t$ must be adjacent to one of $x_{i-2},x_{i-1}$ (as in the proof of Claim~\ref{3key}).
Then $x_\gamma Px_1x_{t-1}Px_ix_mPx_{t}x_\gamma$ is a cycle of length at least $m-1\geq k$, where $\gamma\in\{i-2,i-1\}$, a contradiction.
This contradiction shows that $i=s+1$.

Next, we will show that $j=t-1$.
First, we suppose that $x_{j+1}\in N_P(x_1)$.
By Claim \ref{3key}, we have that $p=i-1$, or $x_1$ is not adjacent to $x_i$, i.e., $p=i$.
$(a)$ $x_1$ is not adjacent to $x_i$.
It follows that $V(x_1Px_{i-1})\subseteq N_H[x_1]$.
If $i\leq 3$, then $x_1x_jPx_ix_mPx_{j+1}x_1$ is a cycle of length $m-1\geq k$ vertices, a contradiction.
Therefore, $i\geq 4$.
Then we consider the path $R_3=x_{i-2}Px_1x_{i-1}Px_m$.
Clearly, $R_3\in \mathcal{P}$ is a crossing path, and hence by Claim  \ref{3structure}, $x_{i-2}$ must be adjacent to one of $\{x_j,x_{j+1}\}$ (as in the proof of Claim~\ref{3key}).
If $x_{i-2}$ is adjacent to $x_j$, then $x_{1}Px_{i-2}x_{j}Px_ix_mPx_{j+1}x_{1}$ is a cycle of length $m-1\geq k$, a contradiction.
Similarly, if $x_{i-2}$ is adjacent to $x_{j+1}$, then $x_{1}Px_{i-2}x_{j+1}Px_mx_iPx_{j}x_{1}$ is a cycle of length $m-1\geq k$, a contradiction.
$(b)$ $p=i-1$.
Clearly, we have $x_{i-2}\in N_H(x_1)$.
Suppose that $x_{i-2}$ has a neighbor $y\in V(H)$ not in $P$.
Then we consider the path $R_4=yx_{i-2}Px_1x_iPx_m$.
Clearly, $R_4\in \mathcal{P}$ is a crossing path.
Therefore, by Claim~\ref{3structure}, $y$ must be adjacent to one of $\{x_j,x_{j+1}\}$ (as in the proof of Claim~\ref{3key}).
If $y$ is adjacent to $x_j$ (or $x_{j+1}$), then $x_1Px_{i-2}yx_jPx_ix_mPx_{j+1}x_1$ (or $x_1Px_{i-2}yx_{j+1}Px_mx_iPx_jx_1$) is a cycle of length at least $k$, a contradiction.
Therefore, we have $N_H(x_{i-2})\subseteq V(P)$.
Then we consider the path $R_5=x_{i-2}Px_1x_iPx_m$.
\footnote{Note that $R_5$ has $m-1$ vertices. Thus $R_5\notin \mathcal{P}$.}
Obviously, $R_5$ has a crossing pair.
Moreover, it is easy to check that $d_{R_5}(x_{i-2})\geq \ell$ and $d_{R_5}(x_{m})\geq \ell$.
Therefore,  by Claim~\ref{3structure}, $x_{i-2}$ must be adjacent to one of $\{x_j,x_{j+1}\}$.
If $x_{i-2}$ is adjacent to $x_j$ (or $x_{j+1}$), then $x_1Px_{i-2}x_jPx_ix_mP_x{j+1}x_1$ (or $x_1Px_{i-2}x_{j+1}Px_mx_iPx_jx_1$) is a cycle of length $m-1\geq k$, a contradiction.
Thus, by $(a)$ and $(b)$, $x_1$ is not adjacent to $x_{j+1}$.
By Claim \ref{3lengthofcp}$(\romannumeral3)$, $x_m$ is adjacent to $x_j$.
Since there is only one minimal crossing pair, $x_1$ is not adjacent to any vertex of $V(x_{j+1}Px_m)$, that is, $j=t-1$.
This completes the proof of the claim.
\end{proof}

By Claim \ref{3evenunique}, there are at most two minimal crossing pairs in $P$.
Suppose that there are two minimal crossing pairs $(i_1,j_1)$ and $(i_2,j_2)$ in $P$ with $i_1<j_1\leq i_2<j_2$.
From Claim \ref{3lengthofcp}, we deduce $j_1-i_1-1=j_2-i_2-1=2$.
Applying Claim \ref{3evenunique} again, we have $m=k+1$.
By the choices of $s$ and $t$, $x_s,x_{s+1}\in N_H(x_1)$ and $x_t,x_{t-1}\in N_H(x_m)$.
Combining Claim~\ref{3lengthofcp}$(\romannumeral3)$ and \ref{3key}, it is not hard to show that  $j_1=i_2$.
Considering the paths $x_{\gamma}Px_1x_{\gamma+1}Px_m$ and  $x_{\lambda}Px_{k+1}x_{\lambda-1}Px_1$, by Claim~\ref{3structure}, we can determine the neighbors of $x_{\gamma}$ and $x_{\lambda}$ in $H$, that is $N_H[x_1]=N_H[x_{\gamma}]$ for $2\leq \gamma\leq \ell-2$ and $N_H[x_{k+1}]=N_H[x_{\lambda}]$ for $\ell+6\leq \lambda\leq k$.
Let $A=V(x_1Px_{\ell-2}),\ B=V(x_{\ell+6}Px_{k+1}),\ C=\{x_{\ell-1},x_{\ell+2},x_{\ell+5}\}$ and $D=V(P)\setminus(A\cup B\cup C)$.
It is easy to check that $G[V(P)]$ gives a copy in $F_4(k+1,k,\ell-2)$, a contradiction.

Therefore, there is only one minimal crossing pair $(i,j)$ in $P$, that is $i=s+1$ and $j=t-1$ by Claim \ref{3evenst}.
It follows from  Claim \ref{3lengthofcp}$(\romannumeral2)$  that $m-(j-i-1)=2\ell$ or $2\ell+1$.
Suppose that $m-(j-i-1)=2\ell$.
Then applying Claim~\ref{3structure}  as the last paragraph, it is not hard to show that $N_H[x_{\gamma}]=N_H[x_1]$ and $N_H[x_m]=N_H[x_{\lambda}]$ for $2\leq \gamma\leq \ell-1,\  m-\ell+2\leq \lambda\leq m-1$.
In fact, consider the crossing path $x_2x_1x_3Px_m\in \mathcal{P}$, by Claim~\ref{3structure}, $x_2$ is not adjacent to at most one of $\{x_{j-1}\}\cup N_H(x_1)$.
If $x_2$ is  adjacent to $x_{j-1}$, then $x_1x_{i-1}Px_2x_{j-1}Px_ix_mPx_jx_1$ is a cycle of length $m\geq k+1$, a contradiction.
Thus, we have $N_H[x_2]=N_H[x_1]$.
Progressively and similarly, we can show that $N_H[x_{\gamma}]=N_H[x_1]$ and $N_H[x_m]=N_H[x_{\lambda}]$.
Hence, it is easy to check that $G[V(P)]$ gives a copy in $ \mathcal{F}(m,k,\ell-1)$ with Type \2, a contradiction .

Therefore, $m-(j-i-1)=2\ell+1$.
Suppose that $d_H(x_1)+d_H(x_m)=2\ell+1$.
Without loss of generality, let $d_H(x_1)=\ell+1$ and $d_H(x_m)=\ell$.
Then applying Claim~\ref{3structure} as the before, it is not hard to show that $N_H[x_\gamma]\subseteq N_H[x_1]$ for $2\leq \gamma\leq \ell$ and $N_H[x_{m}]=N_H[x_{\lambda}]$ for $\ m-\ell+2\leq \lambda\leq m-1$.
$G[V(P)]$ gives a copy in $\mathcal{F}(m,k,\ell-1)$ with Type \2 (note that there is a Hamilton path starting from $x_1$ and ending at $x_m$, also $x_i$ is adjacent to $x_{i-1}$ and $x_1$ is adjacent to $x_j$), a contradiction.

Now we may assume that $d_H(x_1)=d_H(x_m)=\ell$.
Without loss of generality, there exists a vertex $x_p\notin N_H(x_1)\cup N_H^+(x_m)$ with $1\leq p\leq i$.
Claim \ref{3evenst} implies that $i=\ell+1 \mbox{ and } j=m-\ell+1$.
Also, note that $N_H[x_m]=\{x_{\ell+1}\}\cup V(x_{m-\ell+1}Px_{m})$ and $N_H[x_1]=\{x_{m-\ell+1}\}\cup (V(x_1Px_{\ell+1})\setminus \{x_p\})$.
Then we consider the path $Q_{\lambda}=x_1Px_{\lambda-1}x_mPx_{\lambda}$, where $m-\ell+2\leq \lambda\leq m-1$.
Clearly, $Q_{\lambda}\in \mathcal{P}$ is a crossing path.
As the previous proofs, we have $N_P[x_{\lambda}]\subseteq N_H[x_m]\cup \{x_{p-1}\}$ by Claim~\ref{3structure}.

\begin{claim}\label{one-end-part}
For each $m-\ell+2\leq \lambda\leq m-1$, we have $N_H[x_{\lambda}]=N_H[x_m]$.
\end{claim}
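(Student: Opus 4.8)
The plan is to reduce Claim~\ref{one-end-part} to the single statement $x_{p-1}\notin N_H(x_\lambda)$, and then to rule out $x_{p-1}\in N_H(x_\lambda)$ by feeding the path $Q_\lambda$ itself back into the structural claims of this subsection. First I would record the counting step. Since $\lambda\in\{m-\ell+2,\dots,m-1\}$ and $N_H(x_m)=\{x_i\}\cup V(x_jPx_{m-1})$ with $i=\ell+1$ and $j=m-\ell+1$, we have $x_\lambda\in N_H(x_m)$; in particular $x_\lambda\in V(H)$, so $|N_H[x_\lambda]|=d_H(x_\lambda)+1\ge\ell+1=|N_H[x_m]|$. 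Applying Claim~\ref{3basiclemma} to $Q_\lambda$, whose last vertex is $x_\lambda$, gives $N_H(x_\lambda)\subseteq N_{Q_\lambda}(x_\lambda)=N_P(x_\lambda)$, hence $N_H[x_\lambda]\subseteq N_P[x_\lambda]\subseteq N_H[x_m]\cup\{x_{p-1}\}$, the last inclusion being the one established just above the claim. Therefore, if $x_{p-1}\notin N_H(x_\lambda)$ then $N_H[x_\lambda]\subseteq N_H[x_m]$, and equality follows by comparing sizes; so it suffices to prove $x_{p-1}\notin N_H(x_\lambda)$.

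Suppose for contradiction that $x_{p-1}\in N_H(x_\lambda)$. First note $p\ge 3$: if $p=2$ then $x_{p-1}=x_1$, but $x_1\not\sim x_\lambda$, since $x_\lambda\in N_P^+(x_m)\subseteq\bigcup_{a=1}^{m-k+1}N^{+a}_P(x_m)$ and, as $c(G)<k$, this set is disjoint from $N_P[x_1]$. Now consider $Q_\lambda=x_1Px_{\lambda-1}x_mPx_\lambda$, which lies in $\mathcal{P}$ and is a crossing path, so Claims~\ref{3lengthofcp}, \ref{3evenunique} and~\ref{3evenst} all apply to it. Among the vertices of $N_{Q_\lambda}(x_\lambda)=N_P(x_\lambda)\subseteq N_H[x_m]\cup\{x_{p-1}\}$, the one occurring earliest along $Q_\lambda$ is $x_{p-1}$ (the other possible members, $x_{\ell+1}$ and the vertices in $V(x_jPx_m)$, all occur at position at least $\ell+1>p-1$), so $s_{Q_\lambda}=p-2$; and since $x_j\in N_P(x_1)$ with $j\le\lambda-1$, we have $t_{Q_\lambda}\ge m-\ell+2$. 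By Claim~\ref{3evenunique}, $Q_\lambda$ has a unique minimal crossing pair unless $m=k+1$, in which case having two minimal crossing pairs would force, exactly as in the main text, $G[V(Q_\lambda)]=G[V(P)]=F_4(k+1,k,\ell-2)\in\mathcal{F}$, a contradiction; so $Q_\lambda$ has a unique minimal crossing pair $(i'',j'')$, and Claim~\ref{3evenst} gives $i''=s_{Q_\lambda}+1=p-1$ and $j''=t_{Q_\lambda}-1\ge m-\ell+1$.

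Finally I would invoke the length bound. The pair $(i'',j'')$ has length $j''-i''-1=t_{Q_\lambda}-p-1\ge m-\ell-p+1$, while Claim~\ref{3lengthofcp}$(\romannumeral2)$ applied to $Q_\lambda$ gives $j''-i''-1\le m-2\ell$. Hence $m-\ell-p+1\le m-2\ell$, i.e. $p\ge\ell+1$; since also $p\le i=\ell+1$, we get $p=\ell+1$, and then the chain of inequalities collapses: $t_{Q_\lambda}=m-\ell+2$ and the length of $(i'',j'')$ equals $m-2\ell$, i.e. $m-(j''-i''-1)=2\ell$. But this is precisely the situation treated earlier in this subsection (the case $m-(j-i-1)=2\ell$), and that argument, applied to $Q_\lambda$ in place of $P$, forces $G[V(Q_\lambda)]=G[V(P)]$ to be a graph of Type~\2 in $\mathcal{F}(m,k,\ell-1)$ --- contradicting the standing assumption that $G$ contains no member of $\mathcal{F}$. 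Therefore $x_{p-1}\notin N_H(x_\lambda)$ and the claim follows. The only delicate point is to confirm that Claims~\ref{3evenunique} and~\ref{3evenst}, and the ``$m-(j-i-1)=2\ell$'' argument, genuinely apply to the auxiliary path $Q_\lambda$ rather than only to the extremal path $P$ fixed at the start of the subsection; this holds because each of these arguments uses only that the path lies in $\mathcal{P}$ and has a crossing pair.
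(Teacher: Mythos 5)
Your proof is correct, but it takes a genuinely different route from the paper's. The paper argues by direct cycle construction: it splits on whether $p<i$ or $p=i$; when $p<i$ the cycle $x_{p-1}Px_1x_{p+1}Px_{\lambda-1}x_mPx_\lambda x_{p-1}$ of length $m-1\geq k$ arises at once, and when $p=i$ it considers the auxiliary path $L_\lambda=x_1Px_\lambda x_mPx_{\lambda+1}$, forces $x_{\lambda+1}\sim x_i$ via Claim~\ref{3structure}, and closes a Hamiltonian cycle of $P$. You instead re-feed $Q_\lambda$ through the subsection's structural machinery: Claims~\ref{3evenunique} and~\ref{3evenst} applied to $Q_\lambda$ give its unique minimal crossing pair $(i'',j'')$, the computation of $s_{Q_\lambda}=p-2$ and $t_{Q_\lambda}\geq m-\ell+2$, and Claim~\ref{3lengthofcp}(ii) then squeezes $p=\ell+1=i$ and $j''-i''-1=m-2\ell$. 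This is a valid argument provided one accepts (as a close reading of their proofs supports) that Claims~\ref{3evenunique} and~\ref{3evenst} apply to any crossing path in $\mathcal{P}$ once uniqueness of its minimal crossing pair has been established; your handling of the possible two-pair case at $m=k+1$ is more roundabout than necessary, since it is already ruled out by the extremal choice of $P$. One further simplification is available and worth flagging: once you know $j''-i''-1=m-2\ell>m-2\ell-1=j-i-1$, the path $Q_\lambda\in\mathcal{P}$ has the same number of minimal crossing pairs as $P$ but a strictly longer one, which directly contradicts the extremal choice of $P$. Invoking the full ``$m-(j-i-1)=2\ell$'' analysis and re-deriving a Type~\2 subgraph from $Q_\lambda$ is therefore an unnecessary detour, and it is also the shakiest step in the write-up, since that analysis was carried out for $P$ and its auxiliary paths, not for $Q_\lambda$, so its transfer would have to be checked carefully if one actually needed it.
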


\begin{proof}
Suppose to the contrary that $x_{\lambda}$ is adjacent to $x_{p-1}$.
First we assume that $p<i$.
Then $x_{p-1} P x_1 x_{p+1} P x_{\lambda-1} x_m P x_{\lambda}x_{p-1}$ is a cycle of length $m-1\geq k$, a contradiction.
Therefore, we have $p=i$.
Suppose that $x_{\lambda}$ is not adjacent to $x_i$.
By Claim~\ref{3structure}, $x_{\lambda}$ is adjacent to $x_{i-1}$.
Note that $x_1$ is adjacent to $x_j$.
It follows that there is a minimal crossing pair of longer length in $Q_{\lambda}$, a contradiction.
Therefore, $x_{\lambda}$ is adjacent to $x_i$.
Then we consider the path $L_{\lambda}=x_1Px_{\lambda}x_mPx_{\lambda+1}$.
Clearly, $L_{\lambda}\in \mathcal{P}$ is a crossing path.
By Claim \ref{3structure}, $x_{\lambda+1}$ must be adjacent to one of $\{x_{i-1},x_i\}$.
By the maximality of $j-i-1$, $x_{\lambda+1}$ is adjacent to $x_i$.
Then $x_{i-1}Px_1x_jPx_ix_{\lambda+1}Px_mx_{j+1}Px_{\lambda}x_{i-1}$ is a cycle of length $m$, a contradiction.
Therefore, $x_{\lambda}$ is not adjacent to $x_{i-1}=x_{p-1}$.
This completes the proof of the claim.
\end{proof}

Suppose that $x_p\notin H$.
Let $2\leq \gamma\leq \ell$ and $\gamma\neq p-1$.
Then we consider the path $M_{\gamma}=x_{\gamma}Px_1x_{\gamma+1}Px_m$.
Clearly, $M_{\gamma}\in \mathcal{P}$ is a crossing path.
Note that $x_p\notin N_H(x_{\gamma})$.
By Claim \ref{3structure}, we have that $N_H[x_1]=N_H[x_{\gamma}]$.
Let $A=\{x_1,\cdots,x_{\ell}\}\setminus \{x_p\},\ B=\{x_{m-\ell+2},\cdots,x_m\},\ C=\{x_i,x_j\}$ and $D=V(P)\setminus(A\cup B\cup C)$.
It is easy to check that $G[V(P)]$ gives a copy of $F_1(m,k,\ell-1)$ or $F_2(m,k,\ell-1)$ of Type \3, a contradiction.

Therefore, we have $x_p \in H$.
Then $p\geq 3$.
Let $A=\{x_1,\cdots,x_{\ell}\}$ and $C=\{x_i,x_j\}$.
Then we consider the path $T_{\gamma}=x_{\gamma}Px_1x_{\gamma+1}Px_m$ for $2\leq \gamma\leq \ell$ with $\gamma\neq p-1$.
Clearly, $T_{\gamma}\in \mathcal{P}$ is a crossing path.
Firstly, let $p=i$.
By Claim~\ref{one-end-part}, we may consider the path $Q_1=x_{p-1}Px_1x_{m-\ell+1}Px_{i}x_{m-\ell+2}Px_m$.
Clearly, $Q_1\in \mathcal{P}$ is a crossing path.
Let $\mathcal{P}_1=\{Q_1\}\cup \{T_{\gamma}: 2\leq \gamma\leq \ell\}$.
Considering each path in $\mathcal{P}_1$,
by Claim \ref{3structure}, we have that each vertex of $A$ in $G[A\cup C]$ has degree at least $\ell$.
Note that both vertices of $C$ are adjacent to $A$.
It is easy to check that $G[V(P)]$ gives a copy of $F\in \mathcal{F}(m,k,\ell-1)$ of Type \2, a contradiction.
Secondly, let $p\leq \ell=i-1$ and $\ell\geq 4 $.
Then we consider the crossing path $Q_2=x_{p}Px_1x_{p+1}Px_m$.
By the maximality of $Q_2$, $N_H(x_p)\subseteq V(Q_2)=V(P)$.
Since $x_p\notin N_H(x_1)$, by Claim \ref{3structure}, we have that $x_p$ is adjacent to each vertex of $\{x_2,\cdots,x_{\ell+1}\}$.
Considering each path of $T_{\gamma}$ with $2\leq\gamma \leq \ell$ and $\gamma\neq p-1$, and the crossing paths $x_{p-1}x_px_{p-2}Px_1x_{p+1}Px_m$ when $p\geq 4$ and $x_{p-1}x_px_{p+1}x_1x_{p+2}Px_m$ when $p=3$, by Claim \ref{3structure}, each vertex of $A$ in $G[A\cup C]$ has degree at least $\ell$.
It is easy to check that $G[V(P)]$ gives a copy of $F\in \mathcal{F}(m,k,\ell-1)$ of Type \2, a contradiction.
Finally, $p\leq \ell$ and $\ell\leq 3$.
Suppose that $m\geq k+2$.
Then $j= m-\ell+1\geq \ell+5\geq 2\ell+2$.
Thus $x_1Px_jx_1$ is a cycle of length at least $2\ell+2=k$, a contradiction.
Now let $m=k+1$, that is $j=m-\ell+1$.
If $\ell\leq 2$, then $x_1Px_jx_1$ is a cycle of length at least $k$, a contradiction.
Let $\ell=3$.
This forces that $k=8$ and $p=3$.
Suppose that $N_P(x_2)\geq 3$.
Then $G$ contains a copy of $F\in \mathcal{F}(9,8,2)$ ($A=\{x_1,x_2,x_3\}$) with Type \2, a contradiction.
Therefore, $N_P(x_2)= 2$.
It follows that there is a vertex $z\in N_H(x_2)\setminus N_P(x_2)$.
Thus $N_H(z)= \{x_2,x_4,x_7\}$.
Let $A=\{x_1\}$, $B=\{z\}$ and $C=\{x_2,x_4,x_7\}$.
Then we consider the path $zx_2Px_7x_1$.
It is easy to check that $G[V(P)]$ gives a copy in $F\in \mathcal{F}(8,8,1)$ with Type \2, a contradiction.
This completes the proof when $k$ is even and $m\geq k+1$.

\subsubsection{$m=k$.}\label{3m=k}

By Claim \ref{3lengthofcp}$(\romannumeral2)$, we have that $j-i-1=1$ or $2$.
Suppose that either $j-i-1=2$ or $j-i-1=1$ and $d_H(x_1)+d_H(x_k)=2\ell+1$.
Then the same proof as in the Subsection~\ref{mgeqk+1} shows that
$G[V(P)]$ gives a copy of $ \mathcal{F}(k,k,s)$ with Type \2, a contradiction.

Therefore, $j-i-1=1$ and $d_H(x_1)=d_H(x_k)=\ell$.
Without loss of generality, there exists a vertex $x_p\notin N_H(x_1)\cup N_H^+(x_k)$ with $1\leq p\leq i$.
By the definition of $t$, we have that $i\leq t-2$.
Now, subject to previous choices, we choose $P\in \mathcal{P}$ such that $|V(x_{s+2}Px_{t-2})\cap \{x_p\}|$ is as large as possible.

\begin{claim}\label{3even3C1}
$p\leq s+1$.
\end{claim}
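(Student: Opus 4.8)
We are in the case $m = k = 2\ell+2$, $j - i - 1 = 1$ (so $j = i+2$), and $d_H(x_1) = d_H(x_k) = \ell$. Since $j-i-1 = m - 2\ell - 1$ rather than $m - 2\ell$, Claim~\ref{3lengthofcp}$(\romannumeral3)$ gives us exactly one vertex $x_p$ among $V(x_1Px_i)\cup V(x_jPx_m)$ that lies outside $U_i$ (equivalently outside $V_j$); we have fixed notation so that $1 \le p \le i$, i.e.\ $x_p \notin N_H(x_1)\cup N_H^+(x_m)$. We have also, subject to all earlier choices, selected $P$ to maximise $|V(x_{s+2}Px_{t-2})\cap\{x_p\}|$, i.e.\ we prefer $x_p$ to sit strictly between $x_{s+1}$ and $x_{t-1}$. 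The claim asserts $p \le s+1$, which should be read as: either $x_p$ is one of the first $s+1$ vertices, or (via the maximality of our choice of $P$) no rotation can push $x_p$ out of the ``inner'' segment and we derive a contradiction.

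**The approach.** The plan is a proof by contradiction combined with the standard rotation trick. Suppose $p \ge s+2$, so $x_p$ lies in $V(x_{s+1}Px_i)\setminus\{x_{s+1}\}$ (recall $i = s+1$ is a priori \emph{not} yet established here — but by the choices of $s$ and the minimal crossing pair we do know $s+1 \le i$, and $x_{s+1}\in N_P(x_m)$). First I would record the local structure near $x_p$: since $x_p \notin N_H(x_1)$, Claim~\ref{3lengthofcp}$(\romannumeral3)$ forces every \emph{other} vertex of $V(x_1Px_i)$ to lie in $N_H(x_1)$, so in particular $x_{p-1}, x_{p+1} \in N_H(x_1)$ whenever these indices lie in $[1,i]$; likewise every vertex of $V(x_jPx_m)$ lies in $N_H(x_m)$. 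Next I would perform the rotation $R = x_{p-1}Px_1x_pPx_m$ (moving the endpoint from $x_1$ to $x_{p-1}$), or the symmetric one if that is more convenient; since $x_{p-1}\in N_H(x_1)\subseteq V(H)$ and $x_m\in V(H)$, we have $R \in \mathcal{P}$. The key point is to run $R$ through Claims~\ref{3structure} and~\ref{3key}: because $R$ is a crossing path (its crossing pair survives the rotation, as $x_p \ne x_{s+1}$ means the ``hinge'' is inside the high-degree block), $x_{p-1}$ must be adjacent to all but at most one vertex of the complement of the successor-block of $x_m$, and in particular to one of $x_i, x_{i-1}$, or to two consecutive vertices somewhere in the $x_jPx_m$ range — any of which splices into a cycle of length $\ge m = k$, contradicting $c(G) < k$. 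Alternatively, if all those adjacencies are forbidden, then in the path $R$ the ``bad vertex'' has been relocated, so $R$ (or a further rotation of it) has its unique non-neighbour strictly inside $V(x_{s+2}^R P x_{t-2}^R)$, contradicting the \emph{maximality} in our choice of $P$.

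**Main obstacle.** The delicate part will be bookkeeping the indices after rotation: one must verify that $R$ still has a minimal crossing pair of the same length (so Claim~\ref{3evenunique}'s uniqueness still applies), that $s_R$ and $t_R$ are controlled, and — crucially — that the relocated exceptional vertex $x_p$ genuinely moves from position $\ge s+2$ to a strictly ``more interior'' position in $R$, so that the extremal choice of $P$ is actually contradicted rather than merely re-established. The cleanest route is probably to avoid the extremality argument entirely and instead always manufacture a direct cycle of length $\ge k$: for each possible location of the neighbours of $x_{p-1}$ forced by Claim~\ref{3structure}, exhibit the explicit Hamiltonian-type cycle on $V(P)$ (as was done repeatedly in Subsection~\ref{mgeqk+1}, e.g.\ $x_{p-1}Px_1x_{p+1}Px_{\lambda-1}x_mPx_{\lambda}x_{p-1}$) and count its length. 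If every case yields such a cycle, $p \ge s+2$ is impossible and the claim follows. I expect the write-up to consist of two or three such explicit cycle constructions, mirroring the style of Claim~\ref{3key}'s proof.
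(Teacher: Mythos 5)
Your proposal misses two essential features of the paper's argument, and one of your explicit claims is wrong.

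First, the claim that ``since $x_p\notin N_H(x_1)$, Claim~\ref{3lengthofcp}$(\romannumeral3)$ forces every other vertex of $V(x_1Px_i)$ to lie in $N_H(x_1)$'' is incorrect. What Claim~\ref{3lengthofcp}$(\romannumeral3)$ actually gives is that every vertex of $V(x_1Px_i)\cup V(x_jPx_m)$ other than $x_p$ lies in $U_i = N_H[x_1]\cup\left(N_H^+(x_m)\setminus\{x_{i+1}\}\right)$; a vertex such as $x_{p-1}$ may very well lie in $N_H^+(x_m)$ and \emph{not} in $N_H(x_1)$. This is precisely why the paper's proof of this claim splits into subcases according to whether $x_{p-1}\in N_H(x_1)$ and whether $x_p\in N_H(x_k)$. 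Your rotation $R=x_{p-1}Px_1x_pPx_m$ only exists as a path in $G$ if $x_{p-1}x_1$ is an edge, so you cannot get started without first handling the alternative ($x_{p-1}\notin N_H(x_1)$, which then forces $x_p\in N_H(x_k)$ and $x_{p-1}\in N_H^+(x_k)$).

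Second, the plan to ``avoid the extremality argument entirely and instead always manufacture a direct cycle of length $\ge k$'' does not go through. Of the three subcases the paper analyzes, only one (both $x_{p-1}\in N_H(x_1)$ and $x_p\in N_H(x_k)$) culminates in an explicit cycle of length $k$. In the other two subcases ($x_{p-1}\in N_H(x_1)$, $x_p\notin N_H(x_k)$; and $x_{p-1}\notin N_H(x_1)$, $x_p\in N_H(x_k)$), the structure that emerges is \emph{not} a long cycle at all --- instead the paper shows that $G[V(P)]$ contains a copy of a graph in $\mathcal{F}(k,k,s)$ or $\mathcal{F}(k,k,s-1)$ of Type~\2, contradicting the standing hypothesis (in force throughout Lemma~\ref{extend posa lemma}) that $G$ contains no member of $\mathcal{F}(m,k,r)$. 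This forbidden-subgraph contradiction is the whole point of the lemma, and a pure cycle-length argument will not close these two subcases. Your fallback extremality idea also does not match what the paper does: the maximality of $|V(x_{s+2}Px_{t-2})\cap\{x_p\}|$ is invoked in the \emph{subsequent} unnamed claim about $N_H[x_\lambda]=N_H[x_k]$, not in the proof of $p\le s+1$ itself, and merely producing another path achieving the same maximum would not be a contradiction.
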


\begin{proof}
Suppose to the contrary that $p>s+1$, that is $s+2\leq p\leq t-2$.
Then $x_s$ and  $x_{s+1}$ belong to $N_H(x_1)$, and $x_{t-1}$ and $x_t$ belong to $N_H(x_k)$.
Clearly, we have that $x_p\notin N_H(x_1)$ and $x_{p-1}\notin N_H(x_k)$.

Suppose that $x_{p-1}\in N_H(x_1)$ and $x_p\in N_H(x_k)$.
Let
\begin{equation}
C=\{x_{s+1}, x_{s+3}, \cdots, x_{p-5}, x_{p-3}, x_{p+2}, x_{p+4}, \cdots, x_{t-3}, x_{t-1}\}.\nonumber
\end{equation}
We shall show that $x_1$ and $x_k$ are adjacent to each vertex of $C$.
By Claim \ref{3lengthofcp}$(\romannumeral3)$, $p> s+1$ and definition of $s$, $x_1$ and $x_k$ are adjacent to $x_{s+1}$.
By Claim \ref{3basiclemma}$(\romannumeral3)$ and Claim \ref{3key}, $x_1$ and $x_k$ are not adjacent to $x_{s+2}$.
Next, by Claim \ref{3lengthofcp}$(\romannumeral3)$ and  $x_1$ is adjacent to $x_{s+3}$ with $p>s+3$ and hence by Claim \ref{3key}, $x_1$ is not adjacent to $x_{s+4}$.
Then it follows from Claim \ref{3lengthofcp}$(\romannumeral3)$ and \ref{3key} that $x_k$ is adjacent to $x_{s+3}$ and not adjacent to $x_{s+4}$ with $p> s+4$.
Progressively, we can show that $x_1$ and $x_k$ are adjacent to each vertex of $\{x_{s+1},x_{s+3},\cdots,x_{p-5},x_{p-3}\}$.
Moreover, we also show that $i_1=s+1$.
Similarly, we have that $x_1$ and $x_k$ are adjacent to each vertex of $\{x_{p+2},x_{p+4},\cdots,x_{t-3},x_{t-1}\}$ and $j_2=t-1$.
Thus, $x_1$ and $x_k$ are adjacent to each vertex of $C$.
Moreover, we have $s\equiv p$ modulo $2$ and $t-1\equiv p$ modulo $2$.
Then we consider the paths $T_{\gamma}=x_{\gamma}Px_1x_{\gamma+1}Px_k$ and $S_{\lambda}=x_{\lambda}Px_kx_{\lambda-1}Px_1$ for $2\leq \gamma\leq s$ and $ t\leq \lambda\leq k-1$.
Clearly, $T_{\gamma},S_{\lambda}\in \mathcal{P}$.
By Claims \ref{3basiclemma}$(\romannumeral3)$, \ref{3lengthofcp}$(\romannumeral3)$, \ref{3key} and maximality of $j-i-1$, we have that $N_H[x_{\gamma}]=N_H[x_1]$ and $N_H[x_{\lambda}]=N_H[x_k]$.
Hence $x_{p-1}x_1Px_sx_{t-1}Px_px_kPx_{t}x_{s+1}Px_{p-1}$ is a cycle of length $k$, a contradiction.

Suppose that $x_{p-1}\in N_H(x_1)$ and $x_p\notin N_H(x_k)$.
Then $p< i$ and $x_{p+1}\in N_H(x_1)$.
Let
\[C=\{x_{s+1},x_{s+3},\cdots,x_{p-1},x_{p+1},\cdots,x_{t-3},x_{t-1}\}.\]
Then the same proof as in the last paragraph shows that $x_1$ is adjacent to each vertex of $C$ and $x_k$ is adjacent to each vertex of $C\setminus\{x_{p-1}\}$.
Let $A=V(x_tPx_k)$ and $B=V(x_1Px_s)$.
Note that $d_H(x_1)=d_H(x_k)=\ell$.
Then we have $|A|=s+1$ and $|B|=s$.
Considering the paths $x_{\lambda}Px_kx_{\lambda-1}Px_1\in \mathcal{P}$ for $t\leq \lambda\leq k-1$,  by Claim  \ref{3basiclemma}$(\romannumeral3)$, we have that $N_H(x_{\lambda})\subseteq A\cup C$.
Moreover, considering the paths $x_{\gamma}Px_1x_{\gamma+1}Px_k\in \mathcal{P}$ for $ 2\leq \gamma\leq s$, similarly as the proof as in the last paragraph, we have that $N_H(x_{\gamma})= B\cup C$.
Note that $x_k$ is adjacent to both end-vertices of $x_{s+1}Px_{t-1}$.
It is easy to check that $G[V(P)]$ gives a copy in $\mathcal{F}(k,k,s)$ with Type \2.

Therefore $x_{p-1}\notin N_H(x_1)$.
If $x_p\notin N_P(x_k)$, then there is a minimal crossing pair $(i^\prime,j^\prime)$ of length at least two ($x_{p-1},x_p\notin N_P(x_k)$ and $x_{p-1},x_p\notin N_P(x_k)$), a contradiction.
Therefore we have that $x_p\in N_H(x_k)$.
Let
\[C=\{x_{s+1},x_{s+3},\cdots,x_{p-2},x_p,x_{p+2},\cdots,x_{t-3},x_{t-1}\}.\]
As the proofs before, $x_k$ is adjacent to each vertex of $C$ and $x_1$ is adjacent to each vertex of $C\setminus\{x_p\}$.
Let $A=V(x_1Px_s)$ and $B=V(x_tPx_k)$.
From $d_H(x_1)=d_H(x_k)=\ell$, we have $|A|=s$ and $|B|=s-1$.
Considering the paths $x_{\gamma}Px_1x_{\gamma+1}Px_k\in \mathcal{P}$ and $x_{\lambda}Px_kx_{\lambda-1}Px_1\in \mathcal{P}$ for $2\leq \gamma\leq s$ and $t\leq \lambda\leq k-1$, as the previous proofs, we have $N_H[x_{\gamma}]\subseteq A\cup C$ and $N_H[x_{\lambda}]= B\cup C$.
Note that $x_1$ is adjacent to both end-vertices of $x_{s+1}Px_{t-1}$.
It is easy to check that $G[V(P)]$ gives a copy in $\mathcal{F}(k,k,s-1)$ with Type \2, a contradiction.
Thus we finish the proof of Claim \ref{3even3C1}.
\end{proof}

The rest proof is similar as the case $m=k+1$.
Let $2\leq p\leq s+1$, that is, $|V(x_{s+2}Px_{t-2})\cap \{x_p\}|=0$.
Then $x_{t}$ and $x_{t-1}$ belong to $N_H(x_k)$ and $p\leq i_1$.
By Claim \ref{3key}, $x_k$ is not adjacent consecutive vertices of $V(x_{s+1}Px_{i_2})$.
First, we will show that $i_1=s+1$.
Suppose that $i_1>s+1$.
Then by Claim \ref{3key}, $x_k$ is not adjacent to $x_{s+2}$, and hence $i_1\geq s+3$.
Also, since $2\leq p\leq s+1$, by Claim \ref{3lengthofcp}$(\romannumeral3)$, $x_1$ is adjacent to $x_{s+3}$, that is $(s+1,s+3)$ is a crossing pair, contradicting $i_1>s+1$.

\begin{claim}
$N_H[x_{\lambda}]=N_H[x_k]$ for $t\leq \lambda\leq k-1$.
\end{claim}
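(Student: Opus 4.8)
\emph{Proof idea.} Fix $\lambda$ with $t\le\lambda\le k-1$. Since, as already observed, $V(x_tPx_k)\subseteq N_H[x_k]$, all of $x_{t-1},\dots,x_{k-1}$ lie in $N_H(x_k)\subseteq V(H)$; in particular $x_{\lambda-1}\in N_H(x_k)$, so $Q_\lambda:=x_1Px_{\lambda-1}x_kPx_\lambda$ is a well-defined $H$-path on $m=k$ vertices, whence $Q_\lambda\in\mathcal P$, and one checks as in Subsection~\ref{mgeqk+1} that $Q_\lambda$ is a crossing path (it has $d_{Q_\lambda}(x_1)=d_P(x_1)\ge\ell$ and $d_{Q_\lambda}(x_\lambda)=d_P(x_\lambda)\ge\ell$). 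By the maximality of $m$ we get $N_H[x_\lambda]\subseteq V(Q_\lambda)=V(P)$, so only the question of \emph{which} vertices of $P$ are neighbours of $x_\lambda$ remains.

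The plan is to follow the proof of the corresponding claim in Subsection~\ref{mgeqk+1} (the statement $N_H[x_\lambda]=N_H[x_m]$ when $m\ge k+1$) line by line. Applying Claim~\ref{3structure} to the crossing path $Q_\lambda$ (here $m-k+1=1$, $k$ is even, and the two end-degrees are at least $\ell$) confines $N_H[x_\lambda]$ to within a single vertex; combining this with $d_H(x_1)=\ell$ and the explicit description of $N_H[x_k]$ (equivalently of $N_H[x_1]$) available at this point from Claim~\ref{3lengthofcp}$(\romannumeral3)$, the same bookkeeping as in the $m\ge k+1$ case gives $N_H[x_\lambda]\subseteq N_H[x_k]\cup\{x_{p-1}\}$, with $x_{p-1}$ the only admissible ``extra'' neighbour. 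Thus the claim reduces to showing $x_\lambda\not\sim x_{p-1}$.

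To rule out $x_\lambda\sim x_{p-1}$ I would split into the cases $p<i$ and $p=i$, exactly as in the $m\ge k+1$ proof. When $p=i$ one first forces $x_\lambda\sim x_i$ (otherwise Claim~\ref{3structure} gives $x_\lambda\sim x_{i-1}$, which with $x_1\sim x_j$ produces a crossing pair of length $j-i$ in $Q_\lambda\in\mathcal P$, contradicting that $(i,j)$ was chosen of maximal length); then a further rotation through $x_1Px_\lambda x_kPx_{\lambda+1}$ together with Claim~\ref{3structure} and the maximality of $j-i-1$ yields $x_{\lambda+1}\sim x_i$, after which a rerouting of $P$ produces a cycle through all $m$ vertices of $P$. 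The genuinely new difficulty, compared with Subsection~\ref{mgeqk+1}, is that here $m=k$, so a cycle on $m-1=k-1$ vertices is \emph{not} a contradiction: one must produce a cycle of length exactly $m=k$. I expect to recover the last vertex by re-inserting the skipped vertex $x_p$, using Claim~\ref{3key} (that $x_k$, resp.\ $x_1$, cannot be adjacent to two consecutive vertices of the relevant segment); alternatively, $x_\lambda\sim x_{p-1}$ should permit a rotation to a path in $\mathcal P$ whose own ``bad vertex'' falls inside $V(x_{s+2}Px_{t-2})$, contradicting the maximal choice of $P$ fixed just before this claim. This last step under the equality $m=k$ is where I expect the real work to be; once $x_\lambda\not\sim x_{p-1}$ is established, $N_H[x_\lambda]\subseteq N_H[x_k]$, and then $|N_H[x_\lambda]|=d_H(x_\lambda)+1\ge\ell+1=|N_H[x_k]|$ forces $N_H[x_\lambda]=N_H[x_k]$.
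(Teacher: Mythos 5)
Your overall structure matches the paper's: apply Claim~\ref{3structure} to a rotated path to confine $N_H[x_\lambda]$ to $N_H[x_k]\cup\{x_{p-1}\}$, then rule out $x_\lambda x_{p-1}\in E(G)$ by cases on the position of $p$, with a final cardinality argument. You also correctly dispose of the subcase $p=i$: the mimicry of Claim~\ref{one-end-part} ends with a cycle through all $m=k$ vertices of $P$, which is a genuine contradiction even at $m=k$.

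The gap is exactly where you flag it: the subcase $p<i$ (equivalently, by Claim~\ref{claim:p=s+1}, $p<s+1$). The cycle $x_{p-1}Px_1x_{p+1}Px_{\lambda-1}x_kPx_\lambda x_{p-1}$ from the $m\ge k+1$ proof has only $m-1=k-1$ vertices and is not a contradiction. Your ``first alternative'' --- re-inserting $x_p$ via Claim~\ref{3key} to lengthen the cycle --- is not what the paper does and it is unclear that it can be made to work: $x_p$ was omitted precisely because it may have no useful neighbours along the rerouted segments, and Claim~\ref{3key} gives non-adjacency information, not a way to splice $x_p$ back in. Your ``second alternative'' is the paper's argument, but you have only named the mechanism, not executed it. Concretely, the paper takes $P^\lambda=x_\lambda Px_k x_{\lambda-1}Px_1\in\mathcal{P}$, first uses the maximality of the number of minimal crossing pairs (part of the initial choice of $P$) to force $x_\lambda$ to be adjacent to every vertex of $V(x_{t-1}Px_k)$, then observes that since $d_H(x_\lambda)=\ell$ and $x_\lambda\sim x_{p-1}$, some $x\in N_H[x_k]\setminus V(x_{t-1}Px_k)$ is missed, and finally verifies from $N_{P^\lambda}(x_\lambda)\cup\{x\}=N_P(x_k)\cup\{x_{p-1}\}$ that the ``bad vertex'' of $P^\lambda$ lies strictly inside the analogue of $V(x_{s+2}Px_{t-2})$, contradicting the choice of $P$ maximizing $|V(x_{s+2}Px_{t-2})\cap\{x_p\}|$ (which is $0$ for $P$ after Claim~\ref{claim:p=s+1}). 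That bookkeeping --- showing $P^\lambda$ preserves the number of minimal crossing pairs and the maximum pair length, while strictly increasing the last optimization quantity --- is the real content of this claim and is absent from your proposal. You also omit the paper's opening observation that $d_H(x_\lambda)\ge \ell+1$ can be dispatched immediately by re-running the start of the $m=k$ analysis with $P^\lambda$ in place of $P$ (since then $d_H(x_1)+d_H(x_\lambda)\ge 2\ell+1$), though this is a minor omission compared to the missing $p<s+1$ argument.
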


\begin{proof}
By Claim \ref{3structure}, we have $N_H[x_{\lambda}]\subseteq N_H[x_k]\cup \{x_{p-1}\}$.
If $d_H(x_{\lambda})\geq \ell+1$, the we are done as the beginning of the case $m=k$ is even.
Let $d_H(x_{\lambda})= \ell$.
Suppose that $x_{\lambda}$ is adjacent to $x_{p-1}$.
Let $p<s+1$.
Since $x_p\notin N_H^+(x_m)$, $x_1$ is adjacent to $x_{p+1}$.
Considering the path $P^{\lambda}=x_{\lambda}Px_kx_{\lambda-1}Px_1\in \mathcal{P}$, by the maximality of the number of minimal crossing pairs of $P$, $x_{\lambda}$ is adjacent to each vertex of $V(x_{t-1}Px_k)$.
Hence, $x_{\lambda}$ is not adjacent to a vertex $x\in N_H[x_k]\setminus V(x_{t-1}Px_k)$.
Note that $N_{P^{\lambda}}(x_{\lambda})\cup\{x\}= N_P(x_k)\cup \{x_{p-1}\}$.
Thus the path $P^{\lambda}=x_{\lambda}Px_kx_{\lambda-1}Px_1=y_kPy_1$ ($y_k=x_{\lambda}$ and $y_1=x_1$) with a minimal crossing pair $(i^\prime,j^\prime)$ satisfies $|V(y_{s^\prime+2}Py_{t^\prime-2})\cap \{y_{p^\prime}\}|=1$, where $s^\prime=\min\{h:y_{h+1}\in N_{P^\prime}(y_k)\}$, $t^\prime=\max\{h:y_{h-1}\in N_{P^\prime}(y_1)\}$ and $\{y_{p^\prime}\}=(V(y_1Py_{i^\prime})\cup V(y_{j^\prime}Py_k))\setminus (N_{P^{\lambda}}[y_1]\cup N^+_{P^{\lambda}}(y_k))$, a contradiction to the choice of $P$ .
Now, let $p=s+1$.
Note that $i_1=s+1=p$.
Then the same proof as in Claim~\ref{one-end-part} shows that $N_P[x_k]=N_P[x_{\lambda}]$.
This completes the proof of the claim.
\end{proof}

\begin{claim}\label{claim:p=s+1}
$p=s+1$.
\end{claim}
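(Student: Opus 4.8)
The plan is to derive a contradiction from the assumption $2\le p\le s$; together with Claim~\ref{3even3C1} (which already gives $p\le s+1$) this forces $p=s+1$. The starting observation is that, since $d_H(x_1)=d_H(x_k)=\ell$ and $j-i-1=1$, the set $V(x_1Px_i)\cup V(x_jPx_k)$ has exactly $2\ell+1$ vertices and $x_p$ is its \emph{unique} vertex lying outside $U_i=N_H[x_1]\cup(N_H^+(x_k)\setminus\{x_{i+1}\})$; that is, we are in the borderline case of Claim~\ref{3lengthofcp}$(\romannumeral3)$. Since we have already established $i=i_1=s+1$, this pins down the two end segments: $x_1$ is adjacent to every vertex of $V(x_1Px_{s+1})$ except $x_p$, and $V(x_jPx_k)$ consists exactly of $x_j$ together with $N_H^+(x_k)\setminus\{x_{i+1}\}$.

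Assuming $p\le s$, I would first transfer the $H$-neighbourhood of $x_1$ to the earlier vertices: for each $2\le\gamma\le s$ with $\gamma\neq p-1$, feed the longest $H$-path $x_\gamma Px_1x_{\gamma+1}Px_k\in\mathcal{P}$ into Claim~\ref{3structure} (using Claim~\ref{3key} to forbid $x_k$ from being adjacent to two consecutive vertices on the $x_1$-side) to conclude $N_H[x_\gamma]=N_H[x_1]$; any obstruction to this conclusion yields, via a P\'{o}sa-type concatenation of the two sides through the crossing pair $(i,j)$, a cycle of length at least $k$, contradicting $c(G)<k$. Combined with the already-established $N_H[x_\lambda]=N_H[x_k]$ for $t\le\lambda\le k-1$, this determines $G[V(P)]$ up to the position of $p$ and the single bit ``$x_p\in V(H)$ or not''. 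If $x_p\notin V(H)$, the resulting description of $G[V(P)]$ is exactly that of a Type~\3 graph such as $F_1(k,k,r)$, $F_2(k,k,r)$, or a member of $\mathcal{F}_3(k,k,r)$, with $x_p$ playing the role of the low-degree vertex attached to one or two vertices of the clique side $A$ --- contradicting the standing assumption that $G$ has no subgraph in $\mathcal{F}$. If instead $x_p\in V(H)$, I would perform one further rotation involving $x_p$ (using $x_p\notin N_H(x_1)$ and the clique structure already known on the $x_k$-side) to produce another longest $H$-path $P'\in\mathcal{P}$ whose unique defect vertex now lies in the interior segment $V(y_{s'+2}P'y_{t'-2})$; this contradicts the choice of $P$ as maximizing $|V(x_{s+2}Px_{t-2})\cap\{x_p\}|$, which Claim~\ref{3even3C1} showed equals $0$. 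The few degenerate situations --- small $\ell$, or $x_p$ adjacent to $x_k$ --- would be dealt with as in the proof of Claim~\ref{3even3C1}, by exhibiting either a cycle of length $\ge k$ or a small Type~\2 member of $\mathcal{F}(k,k,r)$.

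The main obstacle I expect is the rotation bookkeeping in the case $p=s$: one has to verify that each rotated path is still a \emph{longest} $H$-path (in particular that its new endpoint lies in $V(H)$), track precisely which of $x_{p-1},x_p,x_{p+1}$ are $H$-neighbours of $x_1$ (equivalently, which survived the $(\ell-1)$-disintegration), and confirm that in the rotated path $P'$ the defect vertex genuinely lands strictly between positions $s'+2$ and $t'-2$ --- all while systematically excluding the short cycles of length $\ge k$ that would otherwise finish the argument at once. As in Claim~\ref{3even3C1}, this should split according to whether $x_{p-1}\in N_H(x_1)$, with the subcase $x_{p-1}\in N_H(x_1)$, $x_p\notin N_H(x_k)$ carrying the bulk of the genuinely new work.
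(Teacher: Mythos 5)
Your high-level frame is right — assume $2\le p\le s$, transfer $N_H[x_1]$ to the earlier vertices via rotated paths and Claims~\ref{3structure}/\ref{3key}, combine with $N_H[x_\lambda]=N_H[x_k]$ for $\lambda\ge t$, and derive a forbidden member of $\mathcal{F}(k,k,\cdot)$. For the subcase $x_p\notin V(H)$ with $2\le p\le s-1$ your plan matches the paper, which indeed reads off $F_1(k,k,s)$ or $F_2(k,k,s)$. But there are two genuine gaps.

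First, for $x_p\in V(H)$ (so $p\ge 3$, $s\ge 4$) you propose to rotate so that the defect vertex lands in the interior segment $V(y_{s'+2}Py_{t'-2})$ and invoke the extremal choice of $P$. That is not what the paper does, and I do not see how to make it work: the natural extra rotation one has available, $x_{p-1}x_p x_{p-2}Px_1 x_{p+1}Px_m$, places $x_p$ at position $2$, right next to the endpoint $x_{p-1}$, not in the interior. The paper instead uses this rotation via Claim~\ref{3structure} only to pin down $N_H(x_{p-1})\subseteq A\cup C$, after which $G[V(P)]$ itself is recognized as a Type \2 member of $\mathcal{F}(k,k,s)$ — a \emph{direct} structural contradiction, with no appeal to the extremal choice of $P$. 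Your maximality step is the weakest link here; as written it does not close the case.

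Second, the $p=s$ case is driven by a dichotomy you do not identify: whether $x_1$ is adjacent to two consecutive vertices of $x_{j_1}Px_{t-1}$. (The split you suggest, ``whether $x_{p-1}\in N_H(x_1)$,'' is vacuous when $p=s$, since $x_{s-1}\in N_H(x_1)$ always holds by the definition of $s$ and of the defect $x_p$.) If $x_1$ is not adjacent to two consecutive such vertices, the earlier argument applies verbatim. If it is, one must first deduce $x_s\notin V(H)$ from Claim~\ref{3key}, introduce $\lambda$ (the first such pair), introduce $r=\min\{h:h\ge\lambda,\ x_h\in N_H(x_k)\}$, show $V(x_\lambda Px_r)\subseteq N_H(x_1)$ and that $x_1$ has no two consecutive neighbours past $x_{r+1}$, and then assemble the split clique $A=V(x_1Px_{s-1})\cup V(x_\lambda Px_{r-1})$ with the stitched alternating set $C=\{x_{s+1},x_{s+3},\dots,x_{\lambda-2},x_{r-1},x_r,x_{r+2},\dots,x_{t-2},x_t\}$ to recognize a member of $\mathcal{F}_3(k,k,k-t)$. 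You do name $\mathcal{F}_3$ as a possible outcome, but none of the machinery ($\lambda$, $r$, the two-segment $A$) that actually produces it is in your plan; this is the bulk of the genuinely new work in this claim, and your sketch does not contain the idea that unlocks it.
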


\begin{proof}
Suppose to the contrary that $p\leq s$.
First we assume that $2\leq p\leq s-1$.
Since $\{x_{s},x_{s+1}\}\subseteq N_H(x_1)$, by Claim \ref{3key}, $x_1$ is not adjacent to any two consecutive vertices of $x_{s+1}Px_{t-1}$.
Suppose that $x_p\notin H$.
By Claims \ref{3structure}, we have $N_H[x_\gamma]=N_H[x_1]$ for $2\leq \gamma\leq s$.
Combining the fact that $N_H[x_{\lambda}]=N_H[x_k]$ for $t\leq \lambda\leq k-1$, it is easy to check that $G$ contains either a copy of $F_1(k,k,s)$ or $F_2(k,k,s)$ with Type \3, a contradiction.
Therefore $x_p\in H$.
Then $p\geq 3$ and $s\geq 4$.
Let $A=\{x_1,\cdots,x_s\}$ and $C=\{x_{s+1},x_{s+3},\cdots,x_{t-3},x_{t-1}\}$.
For $x\in A\setminus \{x_{p-1}\}$, we can deduce $N_H[x]\subseteq A\cup C$ from Claim \ref{3structure} as the previous proofs.
By the maximality of $P$, $N_H(x_p)$ belongs to $V(P)$.
It follows that $x_1$ is adjacent to each vertex of $(A\cup C)\setminus \{x_p\}$ and $x_{p}$ is adjacent to each vertex of $(A\cup C)\setminus \{x_1\}$.
Now, considering the path $x_{p-1}x_{p}x_{p-2}Px_1x_{p+1}Px_m\in \mathcal{P}$, Claim \ref{3structure} implies $N_H(x_{p-1})\subseteq A\cup C$.
Note that $x_1$ is adjacent to each vertex of $C$.
Hence, it is easy to check that $G[V(P)]$ gives a copy in $ \mathcal{F}(k,k,s)$ with Type \2, a contradiction.

Therefore we can assume that $p=s$.
Suppose that $x_1$ is not adjacent to any two consecutive vertices of $x_{j_1}Px_{t-1}$.
Then the same proof as in the last paragraph shows that $G[V(P)]$ gives a copy in $ \mathcal{F}(k,k,s)$ with Type \2, a contradiction.
Therefore $x_1$ is adjacent to two consecutive vertices of $x_{j_1}Px_{t-1}$.
Let $\lambda$ be the minimum integer such that $x_1$ is adjacent to both $x_{\lambda}$ and $x_{\lambda+1}$.
By Claim \ref{3key}, we have that $x_s\notin H$.
Let $r=\min\{h: h\geq \lambda, x_h\in N_H(x_k)\}$.
By Claim \ref{3basiclemma} and $p=s<i_1$, we have that $V(x_{\lambda}Px_{r})\subseteq N_H(x_1)$.
Hence, we have that  $x_{r},x_{r-1}\in N_H(x_1)$.
It follows from Claim \ref{3key} that $x_1$ is not adjacent to any two consecutive vertices of $x_{r+2}Px_{t-1}$.
Considering the path $x_{\gamma}Px_1x_{\gamma+1}Px_k\in \mathcal{P}$ for $\gamma\in [2,s-2]\cup[\lambda,r-1]$, by $x_s\notin H$ and Claim~\ref{3structure}, we have $N_P[x_1]=N_P[x_{\gamma}]$.
Let $A=V(x_1Px_{s-1})\cup V(x_{\lambda}Px_{r-1})$, $B=V(x_{t+1}Px_k)$ and $C=\{x_{s+1},x_{s+3},\cdots,x_{\lambda-2},x_{r-1},x_{r},x_{r+2},\cdots,x_{t-2},x_{t}\}$.
Note that $|A|=|B|=k-t$.
Moreover, $G[A]$ is a complete graph.
It is easy to check that $G[V(P)]$ gives a copy in $ \mathcal{F}_3(k,k,k-t)$ with Type \3, a contradiction.
This proves Claim~\ref{claim:p=s+1}.
\end{proof}

Therefore, we have $p=s+1$.
Note that $x_p\in N_H(x_k)$.
Then $x_p\in H$ and $p\geq 3$.
First, we show that $x_1$ is not adjacent to any two consecutive vertices of $V(x_{j_1}Px_t)$.
Suppose to the contrary that $x_1$ is adjacent to both of $\{x_{\lambda},x_{\lambda+1}\}$ for some $\lambda\geq j_1$.
Considering the path $x_{s-1}Px_1x_sPx_k$, it follows from Claim \ref{3structure} that $x_{s-1}$ is adjacent to at least one of $\{x_{\lambda},x_{\lambda+1}\}$.
Hence, $P^\prime_1=x_sPx_{\lambda}x_{s-1}Px_1x_{\lambda+1}Px_k\in \mathcal{P}$ (or $P^\prime_1=x_sPx_{\lambda}x_{1}Px_{s-1}x_{\lambda+1}Px_k\in \mathcal{P}$, we omit the proof of this case) is a crossing path on $k$ vertices ending at $x_k$.
Note that $x_k$ is not adjacent to both of $\{x_1,x_{\lambda-1}\}$.
By Claim \ref{3structure}, $x_s$ is adjacent at least one of $\{x_{\lambda},x_{\lambda+1}\}$.
Then $x_1Px_sx_{\lambda}Px_{s+1}x_kPx_{\lambda+1}x_1$ (or $x_1x_{\lambda}Px_{s+1}x_kPx_{\lambda+1}x_sPx_1$) is a cycle of length $k$, a contradiction.
Therefore $x_1$ is not adjacent to any two consecutive vertices of $x_{j_1}Px_{t-1}$.
Let $A=\{x_1,x_2,\cdots,x_s\}$, $B=\{x_t,x_{t+1},\cdots,x_k\}$ and $C=\{x_{s+1},x_{s+3},\cdots,x_{t-3},x_{t-1}\}$.
Then we have $N_H[x_1]=(A\cup C)\setminus \{x_{s+1}\}$ and $N_H[x_k]=B\cup C$.
Since $N_H[x_k]=N_H[x_{\lambda}]$ for $t\leq \lambda\leq k-1$, by Claim \ref{3structure}, we have that $N_H[x_{\gamma}]\subseteq A\cup C $ for $2\leq \gamma\leq s$.
Note that $x_s$ is adjacent to $x_{s+1}$.
It is easy to check that $G[V(P)]$ gives a copy in $ \mathcal{F}(k,k,s-1)$ with Type \2, a contradiction.
This completes the proof of Lemma~\ref{extend posa lemma}.
\qed

\section{Proof of the main result}\label{main result}
\noindent For a family of graphs $\mathcal{F}$, we say a graph $G$ is {\it $\mathcal{F}$-free} if it does not contain any $F\in \mathcal{F}$  as a subgraph.
Let $F(\ell)$ be the graph obtained by taking a path $P_{2\ell-1}$ on $2\ell-1$ vertices and a disjoint copy of $\overline{K}_3$,
and joining each vertex of $\overline{K}_3$ to each vertex of the larger partite set in the unique bipartition of $P_{2\ell-1}$.

\medskip

Let $k\geq 5$ and $\mathcal{K}_{k,0}=\emptyset$. For $1\leq \alpha\leq \ell-2$, let $\mathcal{K}_{k,\alpha}$ be the family of the following graphs:\footnote{If $k$ is odd, then $\mathcal{K}_{k,\alpha}$ only contains graphs in $\mathcal{F}(m,k,r)$ with  $r\in\{1,\ldots,\alpha,\ell-1\}$.}\\
\indent $(a)$ $F\in \mathcal{F}(m,k,r)$ with  $r\in\{1,\ldots,\alpha\}\cup\{\ell-1,\ell\}$,\\
\indent $(b)$  $F_0(k,k,\ell-2)$ and $F_4(k+1,k,\ell-2)$  when $k\geq 10$ is even and $\ell -\alpha\leq 3$,\\
\indent $(c)$  $F_2(m,k,\alpha+1)$ with $\alpha+1\leq \ell-2$ when $k$ is even,\\
\indent $(d)$  $F_5(m,k,2)$ when $\alpha=1$ and $k$ is even, and\\
\indent $(e)$  $F(\ell)$ when $k$ is even.

\medskip

For a given family of graphs $\mathcal{F}$, we say a graph $G$ is a {\it maximal} $\mathcal{F}$-free graph with $c(G)<k$ if, for any non-edge $ab$ of $G$, $G+ab$ contains either a copy of $F\in\mathcal{F}$  or a cycle of length at least $k$.

The following theorem is the main result of this paper, from which one can derive Theorem~\ref{corollary 2-connected large n}
and some other results (such as the results of \cite{Furedi2016,Furedi2018,MN2019}), to be discussed in Section \ref{corollary}.
Mainly, it says that by forbidding some family $\mathcal{K}_{k,\alpha}$, one can have a good understanding on structural properties of graphs with given circumference and relatively many $s$-cliques.

\begin{theorem}\label{Theorem 2-connected}
Let $k\geq 5$, $\alpha\geq 0$ and $\beta\geq 2$ be integers. Let $G$ be an $n$-vertex $2$-connected maximal $\mathcal{K}_{k,\alpha}$-free graph with $c(G)<k$. If $\ell-\alpha\geq \beta $ and
\begin{equation}\label{bound for 2-connected}
N_s(G)>\max\{h_s(n,k,\ell-\alpha),h_s(n,k,\beta)\},
\end{equation}
then we have either $\omega(G)> k-\beta $ or $|V(H(G,\ell-1))|< k-\ell+\alpha$.
\end{theorem}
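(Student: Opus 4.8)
The plan is to argue by contradiction: assume $G$ is a $2$-connected maximal $\mathcal{K}_{k,\alpha}$-free graph with $c(G)<k$ satisfying \eqref{bound for 2-connected}, but $\omega(G)\le k-\beta$ and $|V(H(G,\ell-1))|\ge k-\ell+\alpha$. Write $H=H(G,\ell-1)$ for the $(\ell-1)$-disintegration of $G$. The first step is to observe that $H$ must be non-empty, for otherwise $G$ is obtained from the empty graph by successively adding back vertices of degree at most $\ell-1$, which places a clean upper bound on $N_s(G)$ in terms of the $h_s$-functions (each added vertex contributes at most $\binom{\ell-1}{s-1}$ copies of $K_s$, and the count telescopes to at most $h_s(n,k,\ell-\alpha)$ or $h_s(n,k,\beta)$), contradicting \eqref{bound for 2-connected}. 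So $H\neq\emptyset$, and since $H$ has minimum degree $\ge\ell$, it contains a path; let $m$ be the number of vertices on a longest $H$-path in $G$.

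The heart of the argument is a case split on whether $m\ge k$ or $m<k$. If $m\ge k$, then Lemma~\ref{extend posa lemma} applies and gives a subgraph $F\in\mathcal{F}(m,k,r)$ of $G$ for some $r\le\ell$. Now I would run through the possible values of $r$. If $r\le\alpha$ or $r\in\{\ell-1,\ell\}$, then $F\in\mathcal{K}_{k,\alpha}$ by clause $(a)$, contradicting $\mathcal{K}_{k,\alpha}$-freeness. This leaves $\alpha+1\le r\le\ell-2$. In this regime $F[A]$ and $F[B]$ (or the appropriate cliques in the relevant Type) are cliques on $r$ vertices together with the common neighborhood $C$ of size $\ell-r+1$, so $A\cup C$ contains a clique of size at least $r + \lceil\text{(something)}\rceil$; more usefully, the presence of $F$ forces $\omega(G)\ge r+1\ge\alpha+2$, but to beat $k-\beta$ one needs $r$ large, so instead I would use $F$ together with maximality of $G$ to push $r$ down: since $\alpha+1\le r$, the graph $F_2(m,k,r)$ (or $F_2(m,k,\alpha+1)$ after shrinking) lies in $\mathcal{K}_{k,\alpha}$ by clause $(c)$, and the special graphs $F_0,F_4,F_5,F(\ell)$ forbidden by clauses $(b),(d),(e)$ eliminate the remaining borderline configurations. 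The point is that the family $\mathcal{K}_{k,\alpha}$ was engineered precisely so that a maximal $\mathcal{K}_{k,\alpha}$-free $G$ cannot contain any $F\in\mathcal{F}(m,k,r)$ with $r$ outside $\{1,\dots,\alpha\}\cup\{\ell-1,\ell\}$ either directly or after one edge-addition; so $m\ge k$ is impossible unless $G$ already has a clique of the required size.

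If $m<k$, then every $H$-path has fewer than $k$ vertices. Here I would use the $2$-connectivity of $H$-components together with the disintegration structure: $V(H)$ spans a subgraph of minimum degree $\ge\ell$, all of whose paths are short, which by a standard argument (e.g.\ a greedy longest-path/rotation argument, or directly Erd\H{o}s–Gallai on $H$) forces $|V(H)|$ to be small — specifically $|V(H)|\le \ell + (\text{length of longest }H\text{-path}) < \ell + (k-1) - \ell = k-1$ after accounting for the path endpoints having $\ge\ell$ neighbors on the path, and a more careful count pinning $m\le 2\ell$ gives $|V(H)| < k-\ell+\alpha$, contradicting our assumption. The residual possibility is that $m\ge k$ is forced by $|V(H)|\ge k-\ell+\alpha$ being large enough — this is exactly the dichotomy the theorem asserts.

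The main obstacle I anticipate is the bookkeeping in the $\alpha+1\le r\le\ell-2$ range when $m\ge k$: one must verify that for \emph{every} graph $F\in\mathcal{F}(m,k,r)$ in this range, either $F$ itself is in $\mathcal{K}_{k,\alpha}$ or $F$ plus a single non-edge of $G$ (which maximality lets us add without creating a $k$-cycle, since that edge lies inside the clique-like part $A\cup C$ where adding it only shortens nothing) produces a member of $\mathcal{K}_{k,\alpha}$ or a forbidden cycle — and this splits by Type \textrm{I}--\textrm{IV} and by the auxiliary conditions on $|A|$ and $F[D]$. The clean case is $k$ odd, where only Type~\textrm{I} occurs and the analysis is short; the even case requires checking the special graphs $F_0,F_1,F_2,F_4,F_5$ against clauses $(b)$--$(e)$ one by one. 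I would present the odd case in full and then indicate the even-case modifications, leaning on maximality to absorb the low-$r$ configurations into $\mathcal{K}_{k,\alpha}$.
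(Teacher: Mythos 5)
Your proposal misses the central structural claim of the paper's proof: that the $(\ell-1)$-disintegration $H$ must be a \emph{complete graph}. You jump from ``$H$ is non-empty'' to a case split on the length of the longest $H$-path, but this bypasses the step that makes the final counting argument possible. In the paper, the maximality of $G$ is used on every non-edge $ab$ of $H$: either $G+ab$ contains a cycle of length $\geq k$ (hence a long $H$-path), or $G+ab$ contains some $F\in\mathcal{K}_{k,\alpha}$ --- and the bulk of the work (Cases 2--4) is showing that the latter also produces a long $H$-path or $k$-cycle in $G$. Without establishing that $H$ is complete, the concluding step --- letting $m=|V(H)|\in[k-\ell+\alpha,\,k-\beta]$, applying the $(k-m)$-disintegration $H'=H(G,k-m)$, and either counting ($H'=H$ gives $N_s(G)\leq\binom{m}{s}+(n-m)\binom{k-m-1}{s-1}$) or finding a long path via P\'osa ($H'\neq H$) --- simply has no starting point.

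Your handling of the range $\alpha+1\leq r\leq\ell-2$ is also wrong. You assert ``the family $\mathcal{K}_{k,\alpha}$ was engineered precisely so that a maximal $\mathcal{K}_{k,\alpha}$-free $G$ cannot contain any $F\in\mathcal{F}(m,k,r)$ with $r$ outside $\{1,\dots,\alpha\}\cup\{\ell-1,\ell\}$'' --- but this is false: $G$ may well contain such an $F$, and the conclusion is not that this is impossible but that Lemma~\ref{bound the edges of F small s} then bounds $N_s(G)\leq h_s(n,k,\ell-\alpha)$ (after splitting into the subcases $F=F_2$, $F=F_5$, $F\in\{F_0,F_4\}$, etc.), contradicting~\eqref{bound for 2-connected}. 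This is the technical core you labeled ``the main obstacle,'' and it is not a matter of bookkeeping: without Lemma~\ref{bound the edges of F small s} there is no contradiction at all in this range. Finally, your $m<k$ case asserts that short $H$-paths force $|V(H)|<k-\ell+\alpha$; this does not follow (e.g.\ $H$ could be a clique on any $m\leq k-\beta$ vertices, whose longest path has $m<k$ vertices and $m\geq k-\ell+\alpha$), and the inequality you derive, $|V(H)|<k-1$, does not even conflict with the hypothesis $|V(H)|\geq k-\ell+\alpha$ unless $\alpha=\ell-1$.
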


We note that if $\alpha$ or $\beta$ is larger, then $\max\{h_s(n,k,\ell-\alpha),h_s(n,k,\beta)\}$ is smaller and presumably the structure of $G$ becomes more complicated.
Also we have $\omega(G)\leq k-2$, so $(b)$ does not occur when $\beta= 2$.
Equivalently, Theorem~\ref{Theorem 2-connected} states that an $n$-vertex 2-connected graph $G$ satisfying (\ref{bound for 2-connected}) with $\beta=2 $ and $|V(H(G,\ell-1))|\geq k-\ell+\alpha$ contains  either  a copy of $K\in \mathcal{K}_{k,\alpha}$ or a cycle of length at least $k$.

\subsection{Some facts on $\mathcal{F}(m,k,r)$ with $r\leq \ell-2$}
We need the following technical propositions. 

\begin{proposition}\label{basic property for A=r+1}
Let $G$ be an $n$-vertex connected graph with a non-edge $c_1c_2$ and $n\geq 6$. Assume that each vertex except $c_1$ and $c_2$ of $G$ has degree $n-2$. Then the following hold:\\
$(\romannumeral1)$ For each $ab\in E(G)$, there is a Hamilton path starting from $c_1$ through $ab$ and ending at $c_2$.\\
$(\romannumeral2)$ For each $v \in V(G)\setminus\{c_1,c_2\}$, there is a  path on $n-1$ vertices starting from $v$ ending at $\{c_1,c_2\}$.\\
$(\romannumeral3)$ For each non-edge $ab\neq c_1c_2$ of $G$, there is a path starting from $c_1$ through $ab$ and ending at $c_2$ on at least $n$ vertices in $G+ab$ except when $\{d_G(c_1),d_G(c_2)\}=\{1,n-3\}$.
\end{proposition}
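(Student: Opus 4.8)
\noindent\textit{Proof proposal.} The plan is to first pin down the structure of $G$, then reduce each of $(\mathrm{i})$--$(\mathrm{iii})$ to the existence of a Hamilton (or near-Hamilton) path with prescribed end-segments in an almost-complete graph, and finally settle that by an Ore-type degree bound together with a small base-case check.

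First, set $W=V(G)\setminus\{c_1,c_2\}$, so $|W|=n-2\ge 4$. Each $w\in W$ has degree $n-2$, hence a unique non-neighbour; so the non-edges inside $W$ form a matching $M$, and, writing $A_i=\{w\in W:wc_i\notin E(G)\}$, no vertex of $W$ can miss both $c_1$ and $c_2$, giving $A_1\cap A_2=\emptyset$ and $W=V(M)\sqcup A_1\sqcup A_2$. Thus $G[W]=K_{|W|}-M$, $d_G(c_i)=|W|-|A_i|$, and connectedness forces $|A_i|\le n-3$. Using that $|V(M)|$ is even, one sees that $\{d_G(c_1),d_G(c_2)\}=\{1,n-3\}$ holds exactly when $M=\emptyset$ and, up to swapping $c_1,c_2$, $A_1=W\setminus\{b\}$ and $A_2=\{b\}$ for some $b\in W$; that is, $G$ is the clique on $W$ with $c_1$ a pendant at $b$ and $c_2$ joined to $W\setminus\{b\}$. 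In that graph the non-edge $c_2b$ really does violate $(\mathrm{iii})$ --- the pendant $c_1$ forces any $c_1$--$c_2$ path to begin $c_1b\cdots$, while the edge $c_2b$ would force $b$ to recur next to $c_2$ --- which explains why the exception is needed.

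The workhorse is that $G[W]=K_{|W|}-M$ has minimum degree at least $|W|-2$, so for $|W|\ge 5$ it is Hamilton-connected (by Ore's Hamilton-connectedness condition, $2(|W|-2)\ge |W|+1$), and in fact then admits, between any two prescribed distinct vertices, a Hamilton path through any prescribed edge not in $M$ --- barring only the case where that edge joins the two prescribed vertices; the small cases $|W|\le 5$ (where $K_4-M$ can be a $4$-cycle) are checked by hand. With this in hand: for $(\mathrm{i})$ and $(\mathrm{iii})$ a $c_1$--$c_2$ path of $G$ (resp.\ $G+ab$) on all $n$ vertices splits as ``$c_1$, an edge into $W$, a Hamilton path of $G[W]$ (resp.\ $G[W]+ab$) with one end in $W\setminus A_1$, the other in $W\setminus A_2$, passing through $ab$ if $ab\subseteq W$, an edge out to $c_2$''; for $(\mathrm{ii})$ one deletes a suitable vertex and takes a Hamilton path of the remainder from $v$ to some $c_i$. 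Passing ``through $ab$'' thus becomes either passing through an edge of an almost-complete graph, or pinning one of the two ends to a named vertex. Connectedness gives $W\setminus A_i\ne\emptyset$ and $|W|\ge 4$, so the prescribed end-sets are large enough for the workhorse to apply; the only way one of them can shrink to a single, forbidden vertex is the graph identified above, which the stated exception removes; and in $(\mathrm{ii})$ the deleted vertex (the non-neighbour of $v$, or else an endpoint of $M$ or a vertex of $A_1\cup A_2$) is chosen so that the reduced end-sets stay nonempty.

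The step I expect to be the genuine work is the case bookkeeping when $d_G(c_1)$ or $d_G(c_2)$ is tiny (so that $c_1$ or $c_2$ is nearly or exactly pendant): verifying that in $(\mathrm{i})$ and $(\mathrm{iii})$ the pinned end never collides with the forbidden vertex except in the one excepted graph, and dealing with the small cases $n\le 7$ where $G[W]$ need not be Hamilton-connected. Everything else follows routinely once the partition $W=V(M)\sqcup A_1\sqcup A_2$ and the identities $d_G(c_i)=|W|-|A_i|$ are established.
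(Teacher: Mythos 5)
Your proposal is correct and follows essentially the same structural decomposition as the paper's proof: your partition $W = V(M)\sqcup A_1\sqcup A_2$ is exactly the paper's $A = A_0\cup A_1\cup A_2$ (with $A_0$ being your $V(M)$), and both arguments reduce each part to finding a Hamilton path of $G[W]$ with its two endpoints pinned to $W\setminus A_1$ and $W\setminus A_2$, possibly passing through a prescribed edge. The only meaningful divergences are in the supporting lemma and in part $(\romannumeral3)$. You invoke Hamilton-connectedness of $K_m-M$ via Ore's condition as a general workhorse, which is clean, though note two small points: for $|W|=4$, $K_4-M$ can be a $4$-cycle and $K_4-e$ is already not Hamilton-connected, so your hand-check genuinely starts at $n=6$; and the refinement ``Hamilton path through a prescribed edge'' is strictly stronger than Hamilton-connectedness and is not literally a corollary of Ore --- it is true for $K_m-M$, $m\ge 5$, but deserves a sentence (e.g.\ peel off $\{a,b\}$ and stitch two sub-paths through the remaining near-complete graph). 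The paper instead cases out $|A_0|\in\{0,2,\ge4\}$ directly, which is more ad hoc but avoids appealing to the stronger ``through an edge'' refinement. For $(\romannumeral3)$ you construct the path directly; the paper's route is a slicker reduction --- add $ab$, then delete one or two independent edges between $\{a,b\}$ and $\{c_1,c_2\}$ so that every vertex of $W$ again has degree $n-2$, apply $(\romannumeral1)$ to the new graph, and observe this fails exactly when the deletion isolates $c_1$ or $c_2$, which is precisely $\{d_G(c_1),d_G(c_2)\}=\{1,n-3\}$. Both routes succeed; the paper's rebalancing trick is worth noting because it makes the excepted case fall out automatically rather than having to be hunted down as a pinned-endpoint collision.
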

\begin{proof}
Let $A=V(G)\setminus\{c_1,c_2\}$.
We divide the vertex of $A$ into $A_0$, $A_1$ and $A_2$ such that each vertex in $A_0$ is adjacent to both of $\{c_1,c_2\}$ and each vertex of $A_i$ is not adjacent to of $c_i$ for $i=1,2$.
Since each vertex  of $A$ has degree $n-2$, $G[A_1]$ and $G[A_2]$ are complete graphs and $G[A_0]$ is the complement of the graph of $|A_0|/2$ independent edges.
Moreover, since $G$ is connected, if $|A_0|=0$, then $|A_1|\geq 1$ and $|A_2|\geq 1$.
Note that $|A|\geq 4$.
For each $ab\in E(G)$, we can easily find a Hamilton path starting from $c_1$ through $ab$ and ending at $c_2$ (consider $|A_0|=0$, $|A_0|=2$ and $|A_0|\geq 4$ separately).
Thus we finish the proof of $(\romannumeral1)$.
The proof of $(\romannumeral2)$ is similar.
Now let $ab$ be a non-edge.
Adding the edge $ab$ and deleting at most two independent edges between $\{a,b\}$ and $\{c_1,c_2\}$ such that each vertex of $A$ in the obtained graph has degree $n-2$.
If the obtained graph is connected, then one can easily prove $(\romannumeral3)$ by applying $(\romannumeral1)$.
Assume that the obtained graph is not connected. Then $\{a,b\} \cap \{c_1,c_2\}\neq \emptyset$, and hence it is easy to see that $\{d_G(c_1),d_G(c_2)\}=\{1,n-3\}$.
Moreover, we have $G[A]=K_{n-2}$.
The proof is complete.
\end{proof}


Recall vertices $x, x_i, y, y_1, z_i, z_i'$ in those special graphs in $\mathcal{F}(m,k,r)$.
For $F=F_0(m,k,r)$, we denote by $v$ the isolated vertex in $F[D]$ and $v_1, v_2$ the neighbour of $v$ in $F[C\cup D]$, respectively.
For $F=F_2(m,k,r)$, we denote by $y_2$ the neighbour of $y$ in $F[C\cup D]$.
For $F\in \mathcal{F}_3(m,k,r)$, we denote by $z_2, z^\prime_2$ the neighbour of $z_1, z^\prime_1$ in $F[C\cup D]$, respectively.
For $F=F_5(m,k,2)$ with $F[A]=S_3$, where $S_3$ is a star in three vertices, we denote by $u_1$ the center of $S_3$.

\begin{proposition}\label{property for H}
For $1\leq r\leq \ell-2$, each $F\in\mathcal{F}(m,k,r)$ satisfies the following:
\begin{itemize}
\item [$(\romannumeral1)$]  Let $ab\in E(F)$. If $ab\in\{x_1x_2,z_1z_2,z_1^\prime z_2^\prime,vv_1,vv_2,y_1y_2\}$, $ab\in E(\{u_1\}, C)$ or $ab\in E(\{y_1\}, C)$ and $r\geq 2$,
then there is a cycle of length $k-2$ containing $ab$; otherwise, there is a cycle of length $k-1$ containing $ab$.
\item [$(\romannumeral2)$] For each non-edge $ab$ in $A\cup B\cup D$, if $\{a,b\}\subseteq A$, $\{a,b\}\subseteq A\cup\{x\}$, $\{a,b\}\subseteq A\cup\{y\}$, $u_1\in \{a,b\}$ or $y_1\in \{a,b\}$ and $r\geq 2$,
then $F+ab$ contains a cycle of length $k-1$  containing $ab$; otherwise, $F+ab$ contains a cycle of length at least $k$ containing $ab$.
\item [$(\romannumeral3)$] For each non-edge $ab$ between $ A\cup B\cup D$ and $C$, $F+ab$ contains a cycle of length at least $k-2$  containing $ab$. Moreover, if $ab$ is between $A$ and $C$ with $u_1\notin\{ a,b\}$, then $F+ab$ contains a cycle of length at least $k-1$  containing $ab$.
\item [$(\romannumeral4)$] Suppose that $G$ is $2$-connected with $c(G)<k$ and containing a copy of $F$. Then $G-A\cup B\cup C$ is a star forest.
\end{itemize}
\end{proposition}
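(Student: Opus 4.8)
The plan is to establish all four parts by a single case analysis over the Types \1, \2 and \3 in the definition of $\mathcal{F}(m,k,r)$, together with the distinguished graphs $F_0,F_1,F_2,F_4,F_5$ and the family $\mathcal{F}_3$. Observe at the outset that $r\le\ell-2$ excludes Type \4 and forces $|C|=\ell-r+1\ge 3$, so that $F[D]$ is edgeless (Types \1 and \3) or a disjoint union of at most two edges together with isolated vertices (Type \2). For parts $(\romannumeral1)$--$(\romannumeral3)$ the uniform strategy is to exhibit explicit long cycles, exploiting three structural features common to every $F\in\mathcal{F}(m,k,r)$: (a) $F$ has a Hamilton path from $A$ to $B$; (b) $A$ and $B$ are (essentially) cliques and are (essentially) completely joined to $C$; and (c) $F[C\cup D]$ is a union of $C$-paths, so that $C$ and $D$ can be swept together along them.

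Concretely, for a Type \1 graph with $C$-path $c_1d_1c_2\cdots d_tc_{t+1}$ (where $t=\ell-r\ge 2$), one checks that (for $t\ge 3$; the case $t=2$ is similar)
\[
c_t\;a_1a_2\cdots a_r\;c_{t-1}d_{t-2}c_{t-2}d_{t-3}\cdots c_2d_1c_1\;b_1b_2\cdots b_r\;c_{t+1}d_t\;c_t
\]
is a cycle of length $2\ell=k-1$ that omits only $d_{t-1}$, and one records analogous model cycles for Types \2 and \3. Given a target edge $ab$, one now permutes which vertices of the clique $A$, of the clique $B$, and of the independent set $C$ play which role so that $ab$ becomes an edge of the model cycle; this yields a cycle of length $k-1$ through $ab$, except when $ab$ meets a vertex of $D$ of degree two both of whose neighbours are forced onto the cycle — precisely the edges $x_1x_2$, $z_1z_2$, $z_1'z_2'$, $vv_1$, $vv_2$, $y_1y_2$, the edges at $u_1$, and, when $r\ge2$, the edges between $y_1$ and $C$ — in which case the forced detour costs one extra vertex and one reaches only $k-2$; this gives $(\romannumeral1)$. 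For $(\romannumeral2)$ and $(\romannumeral3)$ one runs the same construction in $F+ab$: the new edge removes one bottleneck and so buys one vertex over $(\romannumeral1)$, unless $\{a,b\}$ still lies inside a part that cannot be threaded entirely into the cycle — the exceptional cases $\{a,b\}\subseteq A$, $\{a,b\}\subseteq A\cup\{x\}$, $\{a,b\}\subseteq A\cup\{y\}$, $u_1\in\{a,b\}$ or $y_1\in\{a,b\}$ with $r\ge2$ in $(\romannumeral2)$, and $u_1\in\{a,b\}$ for the refinement in $(\romannumeral3)$ — where the gain only suffices to reach $k-1$, respectively $k-2$ when an endpoint lies in $C$.

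For $(\romannumeral4)$ I would use that a graph is a star forest if and only if it contains neither a triangle nor a path on four vertices, so it suffices to show that $G_0:=G-(A\cup B\cup C)$ is $\{K_3,P_4\}$-free. Suppose not, and let $W$ be the component of $G_0$ carrying a triangle or a $P_4$; then $|V(W)|\ge3$, $V(W)\subseteq D\cup(V(G)\setminus V(F))$, and every edge of $G$ leaving $W$ ends in $A\cup B\cup C$. Since $G$ is $2$-connected and $V(W)\ne V(G)$, there are two vertex-disjoint $W$--$(A\cup B\cup C)$ paths, internally avoiding $A\cup B\cup C$, ending at distinct vertices $p_1,p_2\in A\cup B\cup C$; concatenating them through $W$ — routing around the triangle or the $P_4$ — produces a $p_1$--$p_2$ path $R$ whose interior lies in $G_0$. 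One then closes $R$ through $F$: for a suitable choice of $p_1,p_2$ one builds, using (b)--(c), a $p_1$--$p_2$ path $R'$ in $F$ whose interior runs through all of $A\cup B\cup C$ and every vertex of $D$ not used by $R$, since the cliques $A$ and $B$ absorb the vertices of $C$ between consecutive clique-vertices and the remaining vertices of $D$ thread into the $C$-path. As $R$ meets $V(F)$ only inside $D$ apart from $p_1,p_2$, the cycle $R\cup R'$ visits every vertex of $F$ together with the vertices of $R$ outside $V(F)$, hence has at least $m\ge k$ vertices, contradicting $c(G)<k$.

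Parts $(\romannumeral1)$--$(\romannumeral3)$ are then long but mechanical once the model cycles are available, the real labour being the enumeration of types and special graphs. The genuine obstacle is $(\romannumeral4)$: one must choose the two disjoint connecting paths and the returning path $R'$ so that no vertex of $D$ is counted twice while $R'$ still picks up enough of $A\cup B\cup C\cup D$ — when $R$ swallows many vertices of $D$ (so that few remain for $R'$) one needs $p_1,p_2$ to be ``good'' endpoints, e.g. one in $A$ and one in $B$, or both in $C$, for which $F$ minus a few vertices of $D$ remains traceable between $p_1$ and $p_2$, and it is exactly here that the clique-and-join structure of $\mathcal{F}(m,k,r)$ and the constraint $1\le r\le\ell-2$ are used.
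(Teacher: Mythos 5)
Your treatment of $(\romannumeral1)$--$(\romannumeral3)$ is in the same spirit as the paper's: exhibit explicit long paths/cycles by case analysis over the types and the special graphs. The paper streamlines the $k$ even, $|A|=r+1$ cases by delegating them to an auxiliary Proposition (about graphs in which all but two vertices have degree $n-2$); your ``model cycle and permute'' plan is a reasonable substitute, though it would need to be checked against the subtler graphs $F_0$, $F_4$, $F_5$, $F_1$, $F_2$ and $\mathcal{F}_3$ individually, which is exactly where the paper's case list in the exceptional clauses comes from.

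The real problem is in $(\romannumeral4)$, and it is a genuine gap, not just missing detail. First, $p_1,p_2$ are not yours to choose: they are the attachment vertices of the component $W$, so ``for a suitable choice of $p_1,p_2$'' is not a legitimate move. Second, the claim that there is a $p_1$--$p_2$ path $R'$ in $F$ covering \emph{all} of $A\cup B\cup C$ and all of $D\setminus V(R)$ --- i.e.\ a Hamilton path of $F$ minus a few $D$-vertices with prescribed endpoints --- is simply false in general. Already for Type \1 with, say, $r=2$ and $\ell=5$ (so $|A|=|B|=2$, $|C|=4$), there is no Hamilton path of $F$ from $a_1$ to $a_2$ both in $A$: every exit from $A$ must go to $C$, $C$ is independent, $A$ and $B$ have only $2$ vertices each to separate the $4$ vertices of $C$, and the $D$-vertices have both neighbours in $C$, so the parity/degree constraints block it. And even when $p_1,p_2\in C$, the best you can generically guarantee is a $p_1$--$p_2$ path on $k-3$ vertices in $F$, which together with a $4$-vertex $p_1$--$W$--$p_2$ path gives a cycle of only $k-1$ vertices, not $\ge k$. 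The paper avoids all of this by a two-step reduction you skip: it first uses $(\romannumeral1)$--$(\romannumeral3)$ to show a non-trivial component $X$ of $G-(A\cup B\cup C)$ can attach \emph{only} to $C$ (any attachment to $A\cup B\cup D$ immediately yields a $\ge k$-cycle by those parts), then uses the observation that any two vertices of $C$ are joined by a path on at least $k-3$ vertices in $F$ to bound the $C$--$X$--$C$ path to at most $4$ vertices, and finally shows by direct inspection that such an $X$ with $c(G)<k$ must be a star. Your $\{K_3,P_4\}$-free characterization of star forests is correct but is not the shortcut you hope: after ``routing around the triangle or the $P_4$'' you still do not control the length of the $W$-traversal enough for the counting to close, precisely because you have not first pinned $p_1,p_2$ to $C$.
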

\begin{proof} Note that $|C|=\ell-r+1\geq 3$. We only verify some special cases and leave other cases to readers.

$(\romannumeral1)$. For $ab=x_1x_2$, since the longest path starting from $x_1$ ending at $x_2$ contains at most $|D|-2$ vertices of $D$, the result follows. For $|A|=r+1\geq 4$ and $k$ is even, the result follows from Proposition~\ref{basic property for A=r+1}$(\romannumeral1)$. Be careful! For $|A|=r+1=2$ and even $k$, by the definition of $\mathcal{F}(m,k,r)$, we have $F[A]=K_2$.

$(\romannumeral2)$. Let $\{a,b\}\subseteq A\cup\{y\}$ be a non-adjacent pair. Then we have $y_1\in \{a,b\}$ and $r\geq 2$. Since the longest path starting from $y_1$ in $F$ is on at most $k-1$ vertices when $r\geq 2$, the result follows. For $|A|=r+1\geq 4$ and $k$ is even, the result follows from Proposition~\ref{basic property for A=r+1}$(\romannumeral2)$ easily.

$(\romannumeral3)$. Let $F=F_4(k+1,k,3)$ and $\{a,b\}$ be non-adjacent pair between $A\cup B \cup C$ and $D$.
Then the longest path starting from $a$  ending at $b$ contains all vertices of $A\cup B\cup C$ and at least one vertex of $D$.
Thus, we have $c(F+ab)\geq k+1-3=k-2$.
The result follows.
Let $|A|=r+1\geq 4$, $k$ be even and $c_1,c_2$ be the end-vertices of $F[C\cup D]$. Without loss of generality, let $d_{F[A\cup C]}(c_1)\leq d_{F[A\cup C]}(c_2)$.  If $F[A]=K_{\ell}$ and $d_{F[A\cup C]}(c_1)=1$, then it is easy to see that $(\romannumeral3)$ holds (note that $|C|\geq 3$).
Otherwise, the result follows from Proposition~\ref{basic property for A=r+1}$(\romannumeral3)$.

$(\romannumeral4)$. Let $X$ be a non-trivial component of $G-A\cup B\cup C$.\footnote{We say a component is {\it trivial} if it consists of a unique vertex.}
Since $G$ is 2-connected with $c(G)<k$, by $(\romannumeral1)$, $(\romannumeral2)$, and $(\romannumeral3)$, $X$ is only connected to $C$.
Note that, for any two vertices $s_1,s_2\in C$, there is a path on  at least $k-3$ vertices starting from $s_1$ and ending at $s_2$.
The longest path starting from $C$ through $X$ ending at $C$ is on at most four vertices.
Then there is an edge $uv$ in $X$ which is connected to $C$ by two independent edges.
Moreover, $V(X)-\{u,v\}$ is an independent set of $G[X]$ and each vertex of $X-\{u,v\}$ is adjacent to the same vertex of $\{u,v\}$.
Otherwise, it is not hard to show that $G$ contains a cycle of length at least $k$, a contradiction.
Thus $G[X]$ is a star.
This finishes the proof of the proposition.\end{proof}

Let $E_{n-k+1}$ be the $(n-k+1)$-vertex graph consisting of $\lfloor \frac{n-k+1}{2}\rfloor$ independent edges.
Let $G(n,k,3)$ be the graph obtained from a disjoint union of $F_4(k+1,k,\ell-2)$ and $E_{n-k+1}$ by joining each vertex of the set $C$ in $F_4(k+1,k,\ell-2))$ to each vertex in the set $D$ and $V(E_{n-k+1})$.
Denote by $g_s(n,k,3)$ the number of unlabeled $s$-cliques of $G(n,k,3)$. Recall that $h_s(n,k,r)$ is the number of unlabeled $s$-cliques of $H(n,k,r)$. Also recall that $F_4(m,k,\ell-2)$ is the only graph with $m>k$ and $r\leq \ell-2$. We need the following lemma to prove our main theorem.

\begin{lemma}\label{bound the edges of F small s}
Let $G$ be a $2$-connected graph on $n$ vertices with $c(G)<k$. Let $m\geq k\geq 9$ and $1\leq r\leq \ell-2$. Suppose that $G$ contains a copy of $F\in \mathcal{F}(m,k,r)$. Then\\
$(\romannumeral1)$ Let $ \gamma=\min\{\ell-r+2,\ell\}$. If $F=F_2(k,k,r)$, then
\begin{equation}
N_s(G)\leq \min\{h_s(n,k,\gamma),\ldots,h_s(n,k,\ell)\},\nonumber
\end{equation}
$(\romannumeral2)$ If $F=F_5(k,k,2)$, then
\begin{equation}
N_s(G)\leq  h_s(n,k,\ell).\nonumber
\end{equation}
$(\romannumeral3)$ If $F=F_0(k,k,\ell-2)$ or $F=F_4(k,k,\ell-2)$, then
\begin{equation}
N_s(G)\leq \min\{g_s(n,k,3),h_s(n,k,4),\ldots,h_s(n,k,\ell)\}.\nonumber
\end{equation}
$(\romannumeral4)$ Otherwise,
\begin{equation}
N_s(G)\leq \min\{h_s(n,k,\ell-r+1),\ldots,h_s(n,k,\ell)\}.\nonumber
\end{equation}
\end{lemma}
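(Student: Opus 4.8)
The plan is to count $s$-cliques in $G$ by exploiting the rigid structure of the copy of $F\in\mathcal{F}(m,k,r)$ together with the constraints $c(G)<k$ and $2$-connectivity, and then to compare the count against the extremal configurations $H(n,k,j)$ and $G(n,k,3)$. The guiding principle (as in Luo's and Kopylov's arguments) is that every $s$-clique of $G$ must live ``close to'' the dense part $A\cup B\cup C$ of $F$: by Proposition~\ref{property for H}$(\romannumeral4)$, once $G$ contains a copy of $F$ with $r\le\ell-2$, the graph $G-(A\cup B\cup C)$ is a star forest, so any $K_s$ (for $s\ge 3$; the $s=2$ edge-count is handled separately but analogously) either lies inside a bounded-size ``core'' or uses at least one vertex of $C$ as an apex. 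First I would fix the partition $V(G)=(A\cup B\cup C)\cup R$ where $R=V(G)\setminus(A\cup B\cup C)$, note $|C|=\ell-r+1$ and that each vertex of $R$ has degree at most $\ell-1$ toward $A\cup B$ forced by the disintegration and the cycle-length bound, and then split the $K_s$-count as: cliques entirely inside the core, cliques meeting $R$. For the latter, Proposition~\ref{property for H}$(\romannumeral1)$--$(\romannumeral3)$ pins down exactly which edges between $R$ and the core can exist without creating a $C_k$, and the star-forest structure on $R$ bounds how cliques can spread inside $R$.

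The key steps, in order, would be: (1) Establish that every vertex outside a clique of size $\le k-2$ formed by $A\cup B\cup C$ has bounded ``clique-degree'' into the core — concretely, a vertex $v\in R$ in an $s$-clique must have all of its $s-1$ clique-neighbours among $C$ together with at most the one $R$-vertex allowed by the star-forest, and by $c(G)<k$ the number of its core-neighbours forming cliques is at most $\ell$ (this is where the parameter $\gamma=\min\{\ell-r+2,\ell\}$ enters, coming from the special degree-$\ell$ constraint in $F_2$). (2) Reduce to the statement that $N_s(G)$ is maximized when $R$ together with $C$ forms the ``book-like'' structure $H(n,k,j)$ for the appropriate $j$; this is an optimization over how to distribute the $n-(k$ or $k+1)$ leftover vertices and their at most $\ell$ (or $\ell-1$) edges to the core, and it is essentially a weighted convexity/shifting argument: moving a vertex to be adjacent to a larger clique never decreases the $s$-clique count, hence the optimum is one of the $H(n,k,j)$ or, in case $(\romannumeral3)$, the exceptional $G(n,k,3)$ coming from $F_4$. (3) Handle the finitely many ``small'' special graphs $F_0,F_2,F_4,F_5$ by hand: for $F_4$ the leftover vertices attach to the 3-vertex set $C$ of $F_4(k+1,k,\ell-2)$, yielding exactly $G(n,k,3)$ and hence the bound $g_s(n,k,3)$ in $(\romannumeral3)$; for $F_5$ the star $F[A]=S_3$ costs enough cliques that one lands below $h_s(n,k,\ell)$; for $F_2$ the single pendant edge $y_1y$ is what allows one to push $j$ up from $\ell-r+1$ to $\ell-r+2$. (4) Assemble: in each case the resulting bound is $\min$ over the stated range of $h_s(n,k,j)$, because a cleaner core (larger clique inside $A\cup B$, i.e.\ larger $r$) can only increase the count, so the structural rigidity forces $N_s(G)$ below the value at \emph{every} admissible $j$ simultaneously, not just the extremal one.

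The main obstacle I expect is step (2): controlling the $s$-clique count when the leftover vertices in $R$ are allowed to attach to $C$ (and to each other, in a star forest) in many combinatorially distinct ways, and showing that none of these beats $H(n,k,j)$. Unlike edge-counting (where Kopylov's weight function does the job), counting $K_s$ for $s\ge 3$ means a vertex $v\in R$ with $t$ neighbours in $C$ contributes $\binom{t}{s-1}$ new cliques (plus a bounded correction from its private star-leaf), and one must argue that concentrating all the allowed ``budget'' on a single clique $A$ of size $k-\ell+r-1$ and making every $R$-vertex adjacent to a fixed $C$-set maximizes $\sum_v\binom{t_v}{s-1}$ subject to $t_v\le \ell$ (or $\le\ell-1$) and the $c(G)<k$ obstruction that prevents two $R$-vertices from sharing too large a $C$-neighbourhood. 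This is a convexity argument but the side constraints from $2$-connectivity and $c(G)<k$ make it delicate; I would isolate it as a self-contained counting sublemma. A secondary annoyance is the bookkeeping for $s=2$ versus $s\ge3$ and the low-$\ell$ degenerate cases, where $|C|$ is small and several of the special graphs coincide — these I would dispatch by direct inspection using $k\ge 9$.
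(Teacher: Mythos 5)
Your structural skeleton matches the paper's: invoke Proposition~\ref{property for H}$(\romannumeral4)$ to get the star‑forest structure on $R=V(G)\setminus(A\cup B\cup C)$, use $(\romannumeral1)$--$(\romannumeral3)$ to restrict which edges between $R$ and the core can exist, and handle the special graphs $F_0,F_2,F_4,F_5$ separately, with $G(n,k,3)$ arising from $F_4$. So far so good.

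The gap is in your step (2), and it is a genuine one. You frame the counting step as a ``weighted convexity/shifting argument'' over how $R$‑vertices distribute their edges into the core, to be compared against $H(n,k,j)$, and you flag this as the hard part, correctly noting the side constraint $c(G)<k$ makes shifting delicate. But no optimization or shifting is needed, and in fact a shifting move would generically \emph{violate} $c(G)<k$, so this route is not merely harder, it is on shaky ground. The point you are missing is that Proposition~\ref{property for H}$(\romannumeral1)$--$(\romannumeral3)$ already pins down that (outside the small exceptional cases) every vertex of $R$ is adjacent \emph{only} to $C$, which has size exactly $\ell-r+1$; each such vertex therefore contributes at most $\binom{\ell-r+1}{s-1}$ cliques incident with it, with no optimization left over. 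This yields an explicit closed‑form bound
\[
N_s(G)\leq f_s(n,k,r)={k-\ell \choose s}+{\ell+1 \choose s }-{\ell-r+1 \choose s}+(n-k+\ell-r){\ell-r+1 \choose s-1},
\]
and the entire comparison $N_s(G)\leq h_s(n,k,t)$ for all $t\geq\ell-r+1$ (hence the $\min$ over the stated range) reduces to the one‑line monotonicity inequality $f_s(n,k,r)\leq h_s(n,k,t)$, proved by observing ${k-\ell\choose s}+{\ell+1\choose s}\leq{k-t\choose s}+{t+1\choose s}$ and that $(n-k+t-1){t\choose s-1}-{t\choose s}$ is nondecreasing in $t$ for $s\geq 2$. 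Without isolating this explicit counting claim, you cannot obtain the simultaneous bound over the whole range of $j$ that the lemma asserts. The exceptional cases then just perturb the attachment set (to $C\cup\{y_1\}$ for $F_2$, to $\{u_1\}\cup C$ for $F_5$, to $C$ of size $3$ with a slightly different formula $g_s(n,k,3)$ for $F_0/F_4$), each handled by a variant of the same direct count, not by shifting.

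So: right propositions, right decomposition, but the key quantitative step is mis‑identified as an optimization problem, and the actual one‑line inequality that closes the argument is absent from your plan.
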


\begin{proof} We begin with a claim. Let
\begin{equation}
f_s(n,k,r)={k-\ell \choose s}+{\ell+1 \choose s }-{\ell-r+1 \choose s}+(n-k+\ell-r){\ell-r+1 \choose s-1}.\nonumber
\end{equation}

\medskip

\noindent{\bf Claim.}  $f_s(n,k,r)\leq h_s(n,k,t)$ for any $t\geq \ell-r+1$.

\medskip

\begin{proof}
Let $t\geq \ell-r+1$. We have
\begin{align}
f_s(n,k,r)&={k-\ell \choose s}+{\ell+1 \choose s }-{\ell-r+1 \choose s}+(n-k+\ell-r){\ell-r+1 \choose s-1}\nonumber \\
&\leq {k-t \choose s}+{t+1 \choose s}-{t \choose s}+(n-k+t-1){t \choose s-1}= h_s(n,k,t),\nonumber
\end{align}
where the second inequality follows by ${k-\ell \choose s}+{\ell+1 \choose s } \leq {k-t \choose s}+{t+1 \choose s}$ and the fact that $(n-k+t-1){t \choose s-1}-{t \choose s}$  increases with $t$ when $s\geq2$.  The proof is complete.\end{proof}

Let $F\in \mathcal{F}(m,k,r)\setminus \{F_2(k,k,r)\}$.
Let $G$ be an $n$-vertex 2-connected graph with $c(G)<k$ containing a copy of $F$ and $X=G-F$.
By Proposition~\ref{property for H}$(\romannumeral4)$, $X$ is a star forest.
First, we consider the case: $|A|=r+1$ or $k$ is odd, i.e., $F[C\cup D]$ is a $C$-path and $C,D$ are empty sets.
Since $G$ is 2-connected with $c(G)<k$, by Proposition~\ref{property for H}$(\romannumeral1)$, $(\romannumeral2)$ and $(\romannumeral3)$, it is easy to check that $X$ is an independent set.
Moreover, if $F\neq F_5(k,k,2)$, then it is easy to see that each $x\in X$ is only adjacent to $C$.
Let $t\geq \ell-r+1$. Since the numbers of unlabeled  $s$-cliques  inside $A\cup B\cup C$ and unlabeled $s$-cliques incident with $D$ are at most ${k-\ell \choose s}+{\ell+1 \choose s }-{\ell-r+1 \choose s}$ and at most $(n-k+\ell-r){\ell-r+1 \choose s-1}$ respectively.
By the claim, we have that for any $t\geq \ell-r+1$,
\begin{equation}
N_s(G)\leq {k-\ell \choose s}+{\ell+1 \choose s }-{\ell-r+1 \choose s}+(n-k+\ell-r){\ell-r+1 \choose s-1}\leq h_s(n,k,t).\nonumber
\end{equation}
Now let $F=F_5(k,k,2)$. Then each vertex of $X$ can only be adjacent to $\{u_1\}\cup C$.
Thus it is easy the check that $N_s(G)\leq  h_s(n,k,\ell)$.

Now we may suppose that $k$ is even and $|A|=r\leq \ell-3$.
Then $|C|\geq 4$.
$(a)$. $F[C\cup D]$ is a $C$-path, i.e., $F$ is of Type \2.
Then there is a unique edge in $F[D]$.
Clearly, by Proposition~\ref{property for H}$(\romannumeral1)$, $(\romannumeral2)$, and $(\romannumeral3)$, each isolated vertex of $G[X\cup D]$ is only adjacent to $C$.
Let $uw$ be the unique edge in $F[D]$.
Denoted by $u_1$  and $w_1$  the neighbours of $u$  and $w$ in $F[C\cup D]$ respectively.
Since $G$ is 2-connected with $c(G)<k$, each independent edge in $G[X\cup D]$ can only be adjacent to $\{u_1,w_1\}$.
Moreover, the center of each star $S_\alpha$ with $\alpha\geq 3$ in $G[X\cup D]$ is adjacent to both of $\{u_1,w_1\}$ and the leaves of $S_\alpha$  is only adjacent to, without loss of generality, $u_1$.
Recall that $|C|\geq 4$.
Thus, by the claim, it is not hard to show that $N_s(G)\leq f_s(n,k,\ell-r+1)\leq h_s(n,k,t)$ for any $s\geq 2$ and $t\geq \ell-r+1$.
$(b)$. $F=F_1(k,k,r)$. Since $c(G)<k$, it is easy to check that $X$ is an independent set and each $x\in X$ is only adjacent to $C$ or to $\{x_1,x_2\}$, the result follows similarly as before.
$(c)$. $F\in \mathcal{F}_3(k,k,r)$.
Let $C_1=C\cap P^\prime$  and $C_2=C\cap P^\ast$, where $P^\prime\cup P^\ast=F[C\cup D]$.
Then it is not hard to see that $X$ is an independent set.
By Proposition~\ref{property for H}$(\romannumeral1)$, $(\romannumeral2)$, and some observations (for each non-adjacent pair $(a,b)$ between $C$ and $D$, there is a path on at least $k-1$ vertices), each $x\in X$ is only adjacent to $\{z_1,z_2\}$, $\{z^\prime_1,z^\prime_2\}$, $C_1$ or $C_2$.
Hence, by the claim we have $N_s(G)\leq f_s(n,k,\ell-r+1)\leq h_s(n,k,t)$ for any $s\geq 2$ and $t\geq \ell-r+1$. The result follows.

Let $k$ be even  and $|A|=r=\ell-2$.
Then $|C|=3$.
For $F\in \mathcal{F}(m,k,\ell-2)\setminus \{F_0(k,k,\ell-2),F_2(k,k,\ell-2),F_4(k+1,k,\ell-2)\}$, similarly as previous arguments, we have $N_s(G)\leq h_s(n,k,\ell-2)$. The result follows from the claim.
Assume that $F=F_0(k,k,\ell-2)$ or $F=F_4(k+1,k,\ell-2)$.
Since $G$ is 2-connected with $c(G)<k$,  by Proposition~\ref{property for H}$(\romannumeral1)$, $(\romannumeral2)$, and $(\romannumeral3)$, each vertex of $G[D\cup X]$ is not adjacent to $A\cup B$.
Moreover, for each star $S_\alpha$ with $\alpha\geq 3$, the center of the star is adjacent to at least two vertices of $C$ and the leaves of $S_\alpha$ are adjacent to the same vertex $x\in C$.
Furthermore, for other vertices, each of them is adjacent to all vertices of $C$.
Let $t\geq 4.$
Since $n\geq k$, basic calculations show that $\left\lfloor\frac{n-k+3}{2}\right\rfloor\left({5 \choose s}-{3 \choose s}\right)+i{4 \choose s}\leq (n-k+4){4 \choose s-1}-{4 \choose s}$, where $i=1$ when $n-k+3$ is odd, and $i=0$ when $n-k+3$ is even.
Then, combining the above arguments, we have
\begin{align*}
N_s(G)\leq g_s(n,k,3)&= 2{\ell+1 \choose s}- {3 \choose s} +\left\lfloor\frac{n-k+3}{2}\right\rfloor\left({5 \choose s}-{3 \choose s}\right)+i{4 \choose s} \\
&\leq   {k-t \choose s}+{t \choose s}+(n-k+4){4 \choose s-1}-{4 \choose s}\\
&\leq   {k-t \choose s}+{t \choose s}+(n-k+t){t \choose s-1}-{t \choose s}= h_s(n,k,t),
\end{align*}
where the third inequality holds from the fact that $(n-k+t){t \choose s-1}-{t \choose s}$ increases with $t$.
Thus we finish the proof for $r=\ell-2$.

Finally, let $F=F_2(k,k,r)$.
Since $G$ is 2-connected with $c(G)<k$, by  Proposition~\ref{property for H}$(\romannumeral1)$, $(\romannumeral2)$ and $(\romannumeral3)$, $X$ is an independent set.
If  $y$ is adjacent to exactly one vertex of $A$, then each vertex in $\{y\}\cup X$ can only be adjacent to vertices of $C\cup\{y_1\}$ and each vertex of $B$ can only be adjacent to vertices of $C\cup\{y_1\}$. Similarly as the previous proof, we have $N_s(G)\leq f_s(n,k,\ell-r+2)$.
If $y$ is adjacent to two vertices of $A$, then $y$ can be adjacent to all vertices of $A$ and each vertex of $X$ can only be adjacent to $C$.
Hence, we have $N_s(G)\leq f_s(n,k,\ell-r+1)$ as before.
Thus, it follows from the claim that $N_s(G)\leq h_s(n,k,t)$ for any $t\geq \ell-r+2$.
The proof is complete.\end{proof}

\subsection{Proof of Theorem~\ref{Theorem 2-connected}}

Now we are ready for the proof of Theorem~\ref{Theorem 2-connected}.

\medskip

\noindent{\bf Proof of Theorem~\ref{Theorem 2-connected}}.
Let $k\geq 5$, $\alpha\geq 0$, $\beta\geq 2$, $\ell=\lfloor(k-1)/2\rfloor$ and $\ell-\alpha\geq \beta $.
Let $G$ be an $n$-vertex 2-connected maximal $\mathcal{K}_{k,\alpha}$-free graph with $c(G)<k$ satisfying (\ref{bound for 2-connected}).
Thus, if $xy\notin E(G)$, then either $G+xy$ contains a copy of $K\in \mathcal{K}_{k,\alpha}$, or a cycle of length at least $k$.
Now suppose that $\omega(G)\leq k-\beta $ and $|V(H(G,\ell-1))|\geq k-\ell+\alpha$. We will finish our proof by contradictions.
Let $H=H(G,\ell-1)$.

\medskip

\noindent{\bf Claim.} $H$ is a complete graph.

\begin{proof}
Suppose not, there is a non-edge $ab$ in $H$. We prove the claim in the following four cases.

\medskip

\noindent{\bf Case 1.} $G+ab$ contains a cycle of length at least $k$.

\medskip

Then, by $a,b\in V(H)$, there is an $H$-path on at least $k$ vertices.
Thus, there exists a longest $H$-path $P$ on $m\geq k$ vertices.
If $\alpha=\ell-2$ or $k\leq 8$, i.e., $\ell\leq3$, then by Lemma~\ref{extend posa lemma}, $G$ contains a copy of $F\in \mathcal{F}(m,k,r)$, contradicting that $G$ is $\mathcal{K}_{k,\alpha}$-free.
Hence, we may suppose $\alpha<\ell-2$ and $k\geq 9$, i.e., $\ell\geq 4$.
Since $G$ is $\mathcal{K}_{k,\alpha}$-free, by Lemma~\ref{extend posa lemma}, $G$ contains a copy of $F\in \mathcal{F}(m,k,r)\setminus \mathcal{K}_{k,\alpha} $.

Let $k$ be odd, or $r\leq \ell-3$ and $\alpha\geq 2$.
Since $G$ is 2-connected and $c(G)<k$, it follows from Lemma~\ref{bound the edges of F small s}$(\romannumeral4)$ that
\begin{equation}
N_s(G)\leq \min\{h_s(n,k,\ell-r+1),\ldots,h_s(n,k,\ell)\}\leq h_s(n,k,\ell-\alpha),\nonumber
\end{equation}
a contradiction to (\ref{bound for 2-connected}).
For $r\leq \ell-3$ and $\alpha=1$, combining Lemma~\ref{bound the edges of F small s}$(\romannumeral2)$ and  Lemma~\ref{bound the edges of F small s}$(\romannumeral4)$ we can also easily get a contradiction.

Assume that $r= \ell-2$ and $k\geq 10$ is even.
Note that $\ell-\alpha\geq 3$.
If $\ell-\alpha=3$, i.e., $\alpha=\ell-3$, then  we have $F\in \mathcal{F}(m,k,\ell-2)\setminus \{F_0(k,k,\ell-2),F_2(k,k,\ell-2),F_4(k+1,k,\ell-2),F_5(k,k,2)\}$ ($G$ is $\mathcal{K}_{k,\alpha}$-free).
By Lemma~\ref{bound the edges of F small s}$(\romannumeral4)$, we have
\begin{equation}
N_s(G)\leq \min\{h_s(n,k,3),h_s(n,k,4),\ldots,h_s(n,k,\ell)\}\leq h_s(n,k,3),\nonumber
\end{equation}
a contradiction.
Let $\ell-\alpha\geq 4$.
Then $F\in \mathcal{F}(m,k,\ell-2)\setminus \{F_2(k,k,\ell-2)\}$.
It follows from Lemma~\ref{bound the edges of F small s}$(\romannumeral3)$ and $(\romannumeral4)$ that
\begin{equation}
N_s(G)\leq \min\{\max\{g_s(n,k,3),h_s(n,k,3)\},h_s(n,k,4),\ldots,h_s(n,k,\ell)\}\leq h_s(n,k,\ell-\alpha),\nonumber
\end{equation}
which is also a contradiction to (\ref{bound for 2-connected}).
This completes the proof of Case 1.

\medskip

If $c(G+ab)\geq k$ or there is an $H$-path on at least $k$ vertices, then by Case 1, we get a contradiction.
Thus, in the following cases, it suffices to show that either $c(G+ab)\geq k$ or there is an $H$-path on at least $k$ vertices.

Now, suppose that $G+ab$ contains a copy of $F\in \mathcal{K}_{k,\alpha}$.
We divide the following proof into two cases basing on the value of $r$ in $\mathcal{F}(m,k,r)$.

\medskip

\noindent{\bf Case 2.} $G+ab$ contains a copy of $F\in\mathcal{F}(k,k,r)$ for some $r\in\{1,2,\ldots,\alpha\}$ with $\alpha\leq \ell-2$; $F_0(k,k,\ell-2)$ or $F_4(k+1,k,\ell-2)$ when $k\geq 10$ is even and $\ell -\alpha\leq 3$; or $F_2(k,k,\alpha+1)$ with $\alpha+1\leq \ell-2$; or $F_5(k,k,2)$ with $\alpha=1$ and $\ell\geq 4$, i.e. $r=2\leq \ell-2$.

\medskip

Let $A\cup B\cup C\cup D$ be a partition of $V(F)$ in Section 2. By Proposition~\ref{property for H}$(\romannumeral1)$, 
for each edge $ab$ of $F$, there is a path on $k-1$ vertices starting from $a$ and ending at $b$ in $F$, 
except that $ab\in \{x_1x_2,z_1z_2,z_1^\prime z_2^\prime,vv_1,vv_2,y_1y_2\}$, $ab \in E(\{u_1\},C)$ or $ab \in E(\{y_1\},C)$ with $r\geq 2$. We may assume
\begin{equation}\label{Na and Nb}
N_{H}(a)\subseteq V(F) \mbox{ and } N_{H}(b)\subseteq V(F).
\end{equation}
Otherwise,  since $a,b\in H$, by Proposition~\ref{property for H}$(\romannumeral1)$, there is an $H$-path on at least $k$ vertices, and we are done.
Note that there is no edge in $F[C]$. We can choose $v\in\{a,b\} \cap (A\cup B\cup D)$. Then we have
\begin{equation}\label{Na and Nb 2}
v\in A \mbox{ and } N_H(v)\subseteq A\cup C.
\end{equation}
Otherwise, since $|C|\leq \ell$, by Proposition~\ref{property for H}$(\romannumeral2)$ we have $c(G+ab)\geq k$, and hence $G$ contains an $H$-path on $k$ vertices.
Since $a$ is not adjacent to $b$, $N_{H}(a)\geq \ell$, $N_{H}(b)\geq \ell$ and $|A\cup C|\leq\ell+2$, it follows from (\ref{Na and Nb 2}) that each vertex in $A$ has degree at least $\ell$ in $H[A\cup C]$. Thus $G$ contains a copy of $F\in \mathcal{F}(m,k,r)$ with $|A|=r+1$ or a copy of $F(\ell)$ (when $|A|=2$ and $e(H[A])=0$). Both are contradictions.

Let $ab\in \{x_1x_2,z_1z_2,z_1^\prime z_2^\prime,vv_1,vv_2,y_1y_2\}$, $ab \in E(\{u_1\},C)$ or $ab \in E(\{y_1\},C)$ with $r\geq 2$.
Then by Proposition~\ref{property for H}$(\romannumeral1)$, there is a path on $k-2$ vertices staring from $a$ and ending at $b$ in $F$.
Thus for each $w_a\in N_H(a)\setminus V(F)$ and each $w_b\in N_H(b)\setminus V(F)$, we have
\begin{equation}\label{outside-neighbour}
w_a=w_b=w.
\end{equation}
Otherwise, there is an $H$-path starting from $w_a$ ending at $w_b$ on $k$ vertices and we are done.
Now, we consider the following six cases:

(2.1) Let $ab=x_1x_2$.
First, $a$ and $b$ are not adjacent to any vertex of $(B\cup D)\setminus \{x\}$.
Otherwise, by Proposition~\ref{property for H}$(\romannumeral2)$, we can deduce that $c(G+ab)\geq k$, and hence we are done.
Since $|A\cup C|= \ell+1$ and $a$ is not adjacent to $b$, $|N_H(a)\setminus (A\cup C)|\geq 1$ and $|N_H(b)\setminus (A\cup C)|\geq 1$.
Thus by (\ref{outside-neighbour}), we have  $N_H(a)\setminus V(F)=N_H(b)\setminus V(F)=\{w\}$.
Then $w\in V(H)$ and $A\cup C\subseteq V(H)$.
Note that each vertex of $B$ has degree $\ell$ in $G[A\cup B\cup C]$.
Thus we have $B\subseteq V(H)$.
So we can easily find an $H$-path on $k$ vertices.

(2.2) Let $ab\in\{z_1z_2,z_1^\prime z_2^\prime\}$.
Without loss of generality, let $ab=z_1z_2$.
If $N_H(z_1)\subseteq V(F)$, then Proposition~\ref{property for H}$(\romannumeral2)$ implies that $A\cup C\cup \{z\}\subseteq V(H)$.
Note that each vertex of $B$ has degree $\ell$ in $G[A\cup B\cup C]$.
This implies that $B\subseteq V(H)$.
Hence, there is a path on $k$ vertices starting from $B$ ending at $z$ and we are done.
Now we may suppose that there is a vertex $w \in N_H(z_1)\setminus V(F)$.
If there is a vertex $w^\prime \in N_H(w)\setminus V(F)$, then we can find a path on $k$ vertices starting from $w^\prime$ ending at $z_2$ and hence we are done.
Now let $N_H(w)\subseteq V(F)$.
Then by Proposition~\ref{property for H}$(\romannumeral1)$ and $(\romannumeral2)$, if there is a vertex $z\in N_H(w)\setminus\{z_1,z_2\}$, then $c(G+ab)\geq k$.
Thus   $N_H(w)\subseteq\{z_1,z_2\}$ and $\ell=2$, and hence $G=H$.
Therefore, it is easy to find an $H$-path on at least $k$ vertices.

(2.3) Let $ab\in \{vv_1,vv_2\}$.
Then $|C|=3$.
Without loss of generality, let $ab=vv_1$.
Then there is a vertex $w \in N_H(v)\setminus V(F)$.
Otherwise, since $|C|\leq\ell$ and $v$ is not adjacent to $v_1$, we have $N_H(v)\cap (A\cup B\cup D)\neq \emptyset$.
It follows from Proposition~\ref{property for H}$(\romannumeral2)$ that $c(G+ab)\geq k$ and we are done.
Again, it follows from Proposition~\ref{property for H}$(\romannumeral2)$ that $C\subseteq N_H(w)$ or there is a vertex $w^\prime\in N_H(w)\setminus V(F)$.
Thus, in the former case, we have $A\cup B\cup C\subseteq V(H)$, and hence, there is a path on $k$ vertices starting from $w$ ending at $B$.
In the later case, there is a path on $k$ vertices starting from $w^{\prime}$ ending at $v_1$.
We are done in both cases.

(2.4) Let $ab=y_1y_2$. Then $r=\alpha+1$.
$(a)$ $r\geq2$.
If $y_1$ is adjacent to $B$, then $G$ contains a copy of $F\in \mathcal{F}(k,k,\alpha)$.
If there is an $y^\prime\in N_H(y_1)\cap D$, then $y^\prime\in V(H)$; as $|C|=\ell-r+1\leq \ell-1$, there is a vertex $w^\ast\in N_H(y^\prime)$.
We can find an $H$-path on at least $k$ vertices starting from $w^\ast$ and ending at $y_1$.
Thus, there is a vertex $w\in N_H(y_1)\setminus V(F)$.
If there is a vertex $w^\prime\in N_H(w)\setminus V(F)$, then there is a path on $k$ vertices starting from $w^\prime$ ending at $y_2$, and we are done.
Assume that $N_H(w)\subseteq V(F)$.
Thus by Proposition~\ref{property for H}$(\romannumeral1)$, we have $N_H(w)\subseteq C$.
Therefore, since $|C|\leq \ell-1$, we have $C= N_H(w) \subseteq V(H)$.
Hence, as before, we have $B\subseteq V(H)$.
We can easily find a path on $k$ vertices starting from $w$ and ending at $B$.
$(b)$ $r=1$. This case is similar as $(a)$.

For $ab \in E(\{y_1\},C)$ with $r\geq 2$ or $ab \in E(\{u_1\},C)$, 
the proofs are essentially the same as the proof of (2.4) and thus we omit here.

\medskip

\noindent{\bf Case 3.} $G+ab$ contains a copy of $F\in \mathcal{F}(m,k,r)$ for $r\in\{\ell-1,\ell\}$.

\medskip

Let $G+ab$ contains a copy of $F\in \mathcal{F}(m,k,\ell-1)$.
Note that $\delta(F[A\cup  B\cup C])\geq \ell$ and $a,b\in  V(H)$.
It is clearly that $A\cup B \cup C\subseteq V(H)$.
For $k$ is odd, or $|A|=r=\ell-1$ and $k$ is even, since $F[A]=F[B]=K_{\ell-1}$, it is not hard to find an $H$-path on at least $k$ vertices.
Let $k$ be even  and $|A|=r+1=\ell$.
$(a)$. $ab$ is incident with $D$. Let $a\in D$.
Since $d_{F[A\cup C]}(w)\geq \ell$ for each $w\in A$, by Proposition~\ref{basic property for A=r+1}$(\romannumeral1)$ when $\ell\geq 4$ and by definition of $\mathcal{F}(m,k,\ell-1)$ when $\ell=2$, we can find a Hamilton $C$-path in $F[A\cup C]$.
For $\ell=3$, a simple observation shows that there is also a Hamilton $C$-path in $F[A\cup C]$.
Thus there is path on at least $k$ vertices starting from $a$, through the Hamilton path in $F[A\cup C]$, ending at $B$.
We are done.
$(b)$. $ab \in F[A\cup C]$ or $ab \in F[B\cup C]$.
Note that $A\cup B \cup C\subseteq V(H)$. 
The proofs (to be divided into cases: $ab\in \{x_1x_2,y_1y_2\}$, $ab \in E(\{u_1\},C)$ or $ab \in E(\{y_1\},C)$ with $r\geq 2$.) can be handled similarly as the proofs in Case 2.

Now, let $k$ be even and $G+ab$ contains a copy of $F\in \mathcal{F}(m,k,\ell)$ for $m\geq k$.
Let $X=A\cup B\cup \{w,w_1,w_2\}$.
Since the degree of each vertex of $X\setminus\{a,b\}$ in $G[X]$ is at least $\ell$, together with $a,b\in H$, we have $X\subseteq H$.
Hence, if $ab\notin E(C)$ or $ab=w_1w_2$, then we can easily find an $H$-path on at least $k$ vertices.
If $ab\in E(C)$ and $ab\neq w_1w_2$, then there is a path on at least $k$ vertices starting from $a$ (or $b$), through $w$, $A$, $w_1w_2$ and ending at $B$ (note that $ww_1,ww_2 \notin E(C)$).

\medskip

\noindent{\bf Case 4.} $G+ab$ contains a copy of $F(\ell)$ when $k$ is even.

\medskip

Let $A$ and $B$ be the partite sets of $P_{2\ell-1}$ in $F(\ell)$ with $|A|=\ell$  and $|B|=\ell-1$.
Let $C=V(F(\ell))\setminus(A\cup B)$.
If there is an edge in $G[C]$, then $G+ab$ contains a copy of $F\in \mathcal{F}(k,k,1)$, and we are done by Case 2.
Thus, we may assume that $G[C]$ is empty.
Let $a\in A$ and $b\in C$.
If $b$ is adjacent to $B$, then $G+ab$ contains a copy of $F\in \mathcal{F}(k,k,1)$, and hence we are done.
Since $|C|\leq \ell$ and $a$ is not adjacent to $b$, there is a vertex $w\in N_H(b)\setminus (A\cup B\cup C)$.
If there is a $w^\prime \in N_H(w)\setminus (A\cup B\cup C)$, then there is a path on $k$ vertices starting from $w^\prime$ ending at $a$.
Thus, $N_H(w)\subseteq(A\cup B\cup C)$.
Then, it is not hard to see that $c(G+ab)\geq k$, and we are done.
We omit the proof when $a\in A$ and $b\in B$.
We complete our proof of the claim.
\end{proof}

Let $|V(H)|=m.$ Since $V(H(G,\ell-1))\geq k-\ell+\alpha$ and $\omega(G)\leq k-\beta$, we have $k-\ell+\alpha\leq m\leq k-\beta$.
Apply to the graph $G$ the process of $(k-m)$-disintegration. Let $H^\prime=H(G,k-m)$. If $H^\prime=H$, then
\begin{align*}
N_s(G)&\leq {m \choose s}+ (n-m){k -m-1 \choose s-1}\leq \max\{h_s(n,k,\ell-\alpha),h_s(n,k,\beta)\},
\end{align*}
a contradiction to (\ref{bound for 2-connected}). If $H^\prime\neq H$, then there exists a vertex $b\in V(H^\prime)$ which is not adjacent to a vertex $a\in V(H)$.
We divide the proof into the following two cases:
$(a)$. Adding $ab$, the obtained graph contains a cycle of length at least $k$. Then there is a path in $G$ on at least $k$ vertices starting in $H$ and ending in $H^\prime$.
Let $P=xPy$ be a longest such path with $x\in V(H)$ and $y\in V(H^\prime)$.
Then we have $d_P(a)\geq m-1$ and $d_P(b)\geq k-m+1$. It follows from  Lemma~\ref{posa lemma} that $c(G)\geq k$, a contradiction.
$(b)$. Adding $ab$, the obtained graph contains a copy of $K\in \mathcal{K}_{k,\alpha}$.
Note that $H$ is a complete graph on $m\geq k-\ell+\alpha$ vertices, $d_{H^\prime}(b)\geq k-m+1$ and $c(G)\leq k-1$.
Similarly as the Cases 2, 3 and 4, we can find a path on at least $k$ vertices starting from $H$ and ending at $H^\prime$ (actually the situation here will be easier than previous cases).
Thus, by Lemma~\ref{posa lemma} again, we have $c(G)\geq k$.
This final contradiction completes the proof of Theorem~\ref{Theorem 2-connected}. \hfill $\square$

\section{Implications}\label{corollary}

In this section, we shall use Theorem~\ref{Theorem 2-connected} to deduce Theorem~\ref{corollary 2-connected large n} and equivalent statements of some main results in \cite{Furedi2016,Furedi2018,MN2019}.
We need the following result proved by Fan~\cite{Fan1990}.

\begin{theorem}[Fan \cite{Fan1990}]\label{Fan theorem}
Let $G$ be an $n$-vertex 2-connected graph and $ab$ be an edge in $G$.
If the longest path starting from $a$ and ending at $b$ in $G$ has at most $r$ vertices, then $e(G)\leq \frac{(r-3)(n-2)}{2}+2n-3.$
Moreover, the equality holds if and only if $G-\{a,b\}$ is a vertex-disjoint union of copies of $K_r$.
\end{theorem}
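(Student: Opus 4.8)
The plan is to induct on $n$, the bound being attained exactly by the graph $R_t$ obtained from $t\ge 1$ disjoint cliques $K_{r-2}$ by adding two vertices $a,b$, adding the edge $ab$, and joining both $a$ and $b$ to every other vertex --- equivalently, $t$ copies of $K_r$ glued along the common edge $ab$, so that $e(R_t)=\tfrac{(r+1)(n-2)}{2}+1$ and $R_t-\{a,b\}$ is a disjoint union of copies of $K_{r-2}$. (First one checks $r\ge 3$: a $2$-connected graph has a cycle through $ab$, and deleting $ab$ from it leaves an $ab$-path on at least $3$ vertices.) For $n\le r$ the hypothesis is vacuous, and the inequality $\binom{n}{2}\le\tfrac{(r+1)(n-2)}{2}+1$ rearranges to $(r-n)(n-2)\ge 0$, with equality only when $n=r$ (so $G=K_r=R_1$) or in the degenerate case $n=2$.

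For the inductive step I would peel off structure in two ways. If $G-\{a,b\}$ is disconnected, with components $C_1,\dots,C_m$ and $m\ge 2$, then, using the $2$-connectivity of $G$ together with $ab\in E(G)$, each $G_i:=G[C_i\cup\{a,b\}]$ is again $2$-connected with no $ab$-path on more than $r$ vertices, $e(G)=\sum_i e(G_i)-(m-1)$, and $\sum_i|C_i|=n-2$; the induction hypothesis applied to each $G_i$ combines to give exactly $\tfrac{(r+1)(n-2)}{2}+1$, and equality forces each $G_i$ to be some $R_{t_i}$, hence $G=R_{\sum t_i}$. If instead some $v\notin\{a,b\}$ has $d_G(v)\le 2$ (degree $1$ is excluded by $2$-connectivity), let $x,y$ be its neighbours and pass to $G'=(G-v)+xy$ when $xy\notin E(G)$, or to $G-v$ when $xy\in E(G)$; in either case the resulting graph is $2$-connected on $n-1$ vertices, still contains $ab$, and any $ab$-path using the edge $xy$ lifts through $x v y$, so the hypothesis survives with the same $r$. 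Since the deletion removes at most two edges while the bound decreases by $\tfrac{r+1}{2}\ge 2$ (this is where $r\ge 3$ enters, $r=3$ being the only equality case, which propagates), induction gives the bound.

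It remains to treat a $2$-connected $G$ with $G-\{a,b\}$ connected and $d_G(v)\ge 3$ for every $v\notin\{a,b\}$ (so in fact $r\ge 4$). As every extremal $R_t$ satisfies a hypothesis of one of the two reductions, or is the base case $R_1$, here it suffices to prove the strict inequality $e(G)<\tfrac{(r+1)(n-2)}{2}+1$, and one may assume $n>r$. When $a$ and $b$ are each adjacent to all of $H:=G-\{a,b\}$ this is short: $H$ contains no $P_{r-1}$, for such a path would yield an $ab$-path on $r+1$ vertices, so $e(H)<\tfrac{r-3}{2}|V(H)|$ by the Erd\H{o}s-Gallai theorem (strictly, since $H$ is connected with $|V(H)|>r-2$), whence $e(G)\le e(H)+2n-3<\tfrac{(r+1)(n-2)}{2}+1$. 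In general I would fix a longest $ab$-path $P=v_1\cdots v_p$ (with $p\le r<n$, so $U:=V(G)\setminus V(P)\ne\emptyset$) and use non-extendability --- no vertex of $U$ is adjacent to two consecutive vertices of $P$, and for each component $K$ of $G[U]$ any two of its attachment vertices $v_i,v_j$ on $P$ are spaced so that rerouting $P$ through $K$ produces no longer $ab$-path --- together with P\'{o}sa-type rotations of $P$ (and of $G$ with respect to a maximal such $P$) to pin down the attachment pattern, and then bound the edges inside $V(P)$ and those incident to $U$ separately.

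\textbf{Main obstacle.} The crux is precisely this last case. The hypothesis restricts only $ab$-paths, not the circumference of $G$, so $G$ may contain arbitrarily long cycles that avoid the edge $ab$; this blocks a direct appeal to the Kopylov/Erd\H{o}s-Gallai circumference bounds and forces the rotation bookkeeping around the fixed pair $a,b$ to be sharp enough both to give the strict inequality for every remaining $G$ and to certify that no new extremal configuration appears.
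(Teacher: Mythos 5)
The paper does not prove this statement: Theorem~\ref{Fan theorem} is imported verbatim from Fan~\cite{Fan1990} and used as a black box in Section~\ref{corollary}, so there is no in-paper proof to compare against. (Incidentally, your proposal correctly identifies the extremal graphs as having $G-\{a,b\}$ a disjoint union of copies of $K_{r-2}$; the paper's ``$K_r$'' is a typo, as one checks directly from the edge count and from the way the bound is later applied in Lemma~\ref{bound the edges of F large s} and in the proof of Corollary~\ref{corollary 2-connected}.)

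Judged on its own, your proposal is a partial argument with a genuine and self-acknowledged gap. What you do have is sound: the base case $n\le r$ reduces to $(r-n)(n-2)\ge 0$; the decomposition when $G-\{a,b\}$ is disconnected (each block $G_i=G[C_i\cup\{a,b\}]$ is $2$-connected, the $ab$-path hypothesis is inherited, $e(G)=\sum e(G_i)-(m-1)$, and the bounds add up tightly); the degree-$2$ suppression step (the suppressed graph stays $2$-connected, $ab$-paths lift so the hypothesis survives with the same $r$, and the bound drops by $(r+1)/2\ge 2$ while at most two edges are lost); and the sub-case where $a$ and $b$ are both complete to $H=G-\{a,b\}$ via the Erd\H{o}s--Gallai path theorem. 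But the remaining ``irreducible'' case --- $G-\{a,b\}$ connected, $d_G(v)\ge 3$ for all $v\notin\{a,b\}$, $n>r$, and not both of $a,b$ complete to $H$ --- is left as a plan (``fix a longest $ab$-path, use non-extendability and P\'{o}sa rotations, bound edges on $V(P)$ and off $V(P)$ separately''), with the key difficulty explicitly flagged: the hypothesis controls $ab$-paths rather than circumference, so the Kopylov-style disintegration that underlies the circumference theorems is not directly available. That flagged step is exactly the content of Fan's theorem, so as written the proposal does not yet constitute a proof; the rotation bookkeeping around the fixed ends $a,b$ still has to be carried out, and it is nontrivial enough that Fan devoted a paper to it.
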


First, we can derive a more general result concerning the number of cliques from Theorem~\ref{Theorem 2-connected}.

\begin{corollary}\label{corollary 2-connected}
Let $G$ be an $n$-vertex 2-connected graph with minimum degree $\delta\geq 2.$
Let $k\geq 9$ and $\ell-1\geq \delta+1$.\footnote{If $5\leq k\leq 8$, then $\ell-1<\delta+1$. By (\ref{eq for corollary}), it follows from Luo's theorem that $G$ contains a cycle of length at least $k$.}
If $c(G)<k$ and
\begin{equation}\label{eq for corollary}
N_s(G)>\max\{h_s(n,k,\ell-1),h_s(n,k,\delta+1)\},
\end{equation}
then one of the following holds:\\
 $(a)$ $G$ contains a copy of   \\
\indent $(a.1)$ $F\in \mathcal{F}(m,k,r)$ with  $r\in\{1,\ell-1,\ell\}$, or\\
\indent $(a.2)$  $F_0(10,10,2)$ or $F_4(11,10,2)$ when $k=10$, or\\
\indent $(a.3)$  $F_2(m,k,2)$ or $F_5(m,k,2)$ when $k$ is even, or\\
\indent $(a.4)$  $F(\ell)$ when $k$ is even;\\
 $(b)$ $G$ is a subgraph of the graph $Z(n,k,\delta)$;\footnote{The graph $Z(n,k,\delta)$ denotes the vertex-disjoint union of a clique $K_{k-\delta}$ and some cliques $K_{\delta+1}$'s, where any two cliques share the same two vertices.}\\
 $(c)$ $G$ is a subgraph of $H(n,k,\delta)$.
\end{corollary}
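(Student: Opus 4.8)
The plan is to specialise Theorem~\ref{Theorem 2-connected} to the parameters $\alpha=1$ and $\beta=\delta+1$. With this choice the constraint $\ell-\alpha\ge\beta$ becomes exactly the hypothesis $\ell-1\ge\delta+1$, and the inequality (\ref{bound for 2-connected}) becomes exactly (\ref{eq for corollary}). First I would record that $\mathcal{K}_{k,1}$ is precisely the family listed in $(a.1)$--$(a.4)$: part $(a)$ of the definition of $\mathcal{K}_{k,\alpha}$ gives $F\in\mathcal{F}(m,k,r)$ with $r\in\{1,\ell-1,\ell\}$; part $(e)$ gives $F(\ell)$ for even $k$; part $(c)$ gives $F_2(m,k,2)$ for even $k$ (here $\alpha+1=2\le\ell-2$, which holds since $k$ even and $k\ge 9$ force $\ell\ge 4$); part $(d)$ gives $F_5(m,k,2)$ for even $k$; and part $(b)$ contributes $F_0(k,k,\ell-2)$ and $F_4(k+1,k,\ell-2)$ exactly when $k\ge 10$ is even with $\ell-1\le 3$, i.e.\ when $k=10$. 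Hence if $G$ already contains a copy of some member of $\mathcal{K}_{k,1}$, conclusion $(a)$ holds and we are done.

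So assume $G$ is $\mathcal{K}_{k,1}$-free. I would repeatedly add to $G$ any non-edge $ab$ for which $G+ab$ contains neither a copy of a member of $\mathcal{K}_{k,1}$ nor a cycle of length at least $k$, stopping when no such non-edge remains; let $G'$ be the resulting graph. Then $G'\supseteq G$ is an $n$-vertex $2$-connected \emph{maximal} $\mathcal{K}_{k,1}$-free graph with $c(G')<k$, and since adding edges cannot decrease the number of $s$-cliques, $N_s(G')\ge N_s(G)$, so $G'$ satisfies (\ref{bound for 2-connected}) with $\alpha=1$, $\beta=\delta+1$. Applying Theorem~\ref{Theorem 2-connected} to $G'$ then yields $\omega(G')\ge k-\delta$ or $|V(H(G',\ell-1))|\le k-\ell$. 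Since $G\subseteq G'$, it suffices to locate $G'$ (hence $G$) inside $Z(n,k,\delta)$ or $H(n,k,\delta)$.

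The remaining structural step is where the real work lies, and I expect it to be the main obstacle. For every edge $ab$ of $G'$ a longest $(a,b)$-path has at most $k-1$ vertices, since otherwise closing it up yields a cycle of length at least $k$; hence Fan's theorem (Theorem~\ref{Fan theorem}) both bounds $e(G')$ and, in the extremal regime, forces $G'-\{a,b\}$ to be a disjoint union of cliques. Combining this with the maximality of $G'$ and the fact that $G'$ contains no $\mathcal{F}(m,k,1)$, $\mathcal{F}(m,k,\ell-1)$, $\mathcal{F}(m,k,\ell)$ or $F(\ell)$, I would argue as follows. If $|V(H(G',\ell-1))|\le k-\ell$, then estimating $N_s(G')$ through the $(\ell-1)$-disintegration shows that (\ref{eq for corollary}) fails unless $G'$ is a single clique on at most $k-\ell+1$ vertices together with pendant blocks attached to a fixed $\delta$-set, i.e.\ $G'\subseteq H(n,k,\delta)$. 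If $\omega(G')\ge k-\delta$, then for a maximum clique $Q$ the conditions $c(G')<k$ and $2$-connectivity restrict each vertex outside $Q$ either to be joined to $Q$ through at most two fixed vertices of $Q$ or to be joined only to a $\delta$-subset of $Q$; maximality of $G'$ then forces $G'$ to be $Z(n,k,\delta)$ or $H(n,k,\delta)$. The delicate point --- and precisely where $\delta\ge 2$, $\ell-1\ge\delta+1$, and the path-routing of Proposition~\ref{basic property for A=r+1} enter --- is the bookkeeping of how the $n-|Q|$ vertices off the clique hang on while keeping the circumference below $k$ and avoiding every forbidden subgraph; this fork is exactly what separates the ``book'' graph $Z(n,k,\delta)$ from the ``clique-plus-pendants'' graph $H(n,k,\delta)$, and matching the clique-size parameter to the minimum degree $\delta$ of $G$ is the final check.
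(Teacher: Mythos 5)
Your setup is exactly the paper's: take $\alpha=1$, $\beta=\delta+1$ in Theorem~\ref{Theorem 2-connected}, and pass from $G$ to a maximal $\mathcal{K}_{k,1}$-free supergraph $J\supseteq G$ with $c(J)<k$ (your $G'$). Your unpacking of $\mathcal{K}_{k,1}$ into the list $(a.1)$--$(a.4)$ is also correct, including the check that part $(b)$ of the definition only contributes for $k=10$. Up to this point the proposal matches the paper.

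There are, however, two issues. The smaller one is your handling of the disintegration branch. You claim that $|V(H(G',\ell-1))|\le k-\ell$ forces $G'\subseteq H(n,k,\delta)$. That is not what happens: in this case the paper simply bounds $N_s(J)$ by following the $(\ell-1)$-disintegration (each removed vertex contributes at most $\binom{\ell-1}{s-1}$ copies of $K_s$, and the residual core has at most $k-\ell$ vertices), obtaining $N_s(J)\le h_s(n,k,\ell-1)$, which \emph{contradicts} (\ref{eq for corollary}). So this branch is vacuous, not a source of the $H(n,k,\delta)$ conclusion; both $H(n,k,\delta)$ and $Z(n,k,\delta)$ arise entirely from the large-clique branch.

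The larger issue is that the genuinely hard part of the proof is missing, and you acknowledge as much but leave it as hand-waving. In the case $\omega(J)\ge k-\delta$, the paper does not merely combine Fan's theorem with the forbidden-subgraph list. It picks a maximum clique $K$, a non-edge $x_1x_m$ with $x_1\in K$, $x_m\notin K$, and uses maximality of $J$ to produce a long $(x_1,x_m)$-path or a copy of a member of $\mathcal{K}_{k,1}$ after adding $x_1x_m$. It then re-runs a version of the entire P\'osa--Kopylov argument of Lemma~\ref{extend posa lemma}, adapted to the endpoint degrees $d_P(x_1)\ge k-\delta-1$ and $d_P(x_m)\ge\delta$, to produce a copy of a member of a \emph{new} family $\mathcal{F}(m,k,r,\delta)$ (with $|A|=r$, $|B|=k-2\delta+r-1$, $|C|=\delta-r+1$). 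The structural dichotomy between $H(n,k,\delta)$ and $Z(n,k,\delta)$ then comes from analysing which $r$ and $|C|$ are compatible with $\delta(J)\ge\delta$, together with the minimum-degree form of the Erd\H{o}s--Gallai/Fan theorem applied to $J-(A\cup B\cup C)$. Your sentence ``each vertex outside $Q$ is joined either through at most two fixed vertices of $Q$ or only to a $\delta$-subset of $Q$; maximality forces $Z(n,k,\delta)$ or $H(n,k,\delta)$'' is a description of the desired conclusion, not an argument for it, and omits the rotation-style analysis that the paper actually performs. That analysis cannot be skipped: without it there is no way to turn $\omega(J)\ge k-\delta$ into the two extremal structures.
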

\begin{proof}
If (a) holds, then we are done.
Thus we may suppose that $G$ is $\mathcal{K}_{k,1}$-free.
Let $J\supseteq G$ be a maximal $\mathcal{K}_{k,1}$-free with $c(J)<k$.
Suppose that $|V(H(J,\ell-1)|\leq k-\ell$. Then \begin{align*}
N_s(J)\leq (n-k+\ell){\ell-1\choose s-1}+{k-\ell \choose s}      =h_s(n,k,\ell-1)
\end{align*} contradicting (\ref{eq for corollary}).
Thus we have $|V(H(J,\ell-1)|\geq k-\ell+1$.
Clearly, $N_s(J)>\max\{h_s(n,k,\ell-1),h_s(n,k,\delta+1)\}$ and $\delta(J)\geq \delta.$
Applying Theorem~\ref{Theorem 2-connected} with $\alpha=1$ and $\beta=\delta +1$, we have that $\omega(J)\geq k-\delta$.

It suffices to show that either $(b)$ or $(c)$ holds.
Let $K$ be a maximum clique in $J$.
Then there is a non-edge $x_1x_m$ with $x_1\in K$ and $x_m\in J-K$.
Then by the maximality of $J$, we may first suppose that there is a longest path $P=x_1x_2\ldots x_m$ on $m\geq k$ vertices starting from $x_1$ and ending at $x_m$.
Thus we have $d_P(x_1)\geq k-\delta-1$ (note that $x_1\in K$  and $|K|\geq k-\delta$) and $d_P(x_m)\geq \delta$.
Similarly as the proofs in Lemma~\ref{extend posa lemma}, we only need consider the case that there exist $i$ and $j$ with $2\leq i< j \leq m-1$ such that $x_j \in N_P(x_1)$ and $x_i\in N_P(x_m)$.
Moreover, by  the proofs of Lemma~\ref{extend posa lemma} (since there is a clique of size $k-\delta$ in $G[V(P)]$, the proofs here are much easier), $J$ contains a copy of $F\in \mathcal{F}(m,k,r,\delta)$ for $1\leq r\leq \delta-1$, where each  $F\in \mathcal{F}(m,k,r,\delta)$ with a partition $V(F)=A\cup B\cup C\cup D$ on $m$ vertices satisfies the following:
\begin{itemize}
  \item $F[A]$ and $F[B]$ are  complete graphs with $|A|=r$ and $|B|=k-2\delta +r-1$;
  \item $F[C]$ is empty with $|C|=\delta-r+1$;
  \item $F(A,C)$ and $F(B, C)$  are  complete bipartite graphs;
  \item $F[D]$ is empty or a path when  $m\geq k+1$ and $|C|=2$;
  \item and $F[C\cup D]$ is a $C$-path.
\end{itemize}
The graph family $\mathcal{F}(m,k,r,\delta)$ plays the same role as the graph family $\mathcal{F}(m,k,r)$, that is, if a 2-connected graph with $c(J)<k$ containing a copy of $F\in\mathcal{F}(m,k,r,\delta)$, then each component of $J-V(F)$ can only be adjacent to $C$ of $V(F)$.
Thus, if $|C|\geq 3$, then $J-V(F)$ is an independent set.
Since $\delta(J)\geq\delta$, we have either $m=k$, $r=1$ and $|C|=\delta$, or $r=\delta-1$ and $|C|=2$.
If $m=k$, $r=1$ and $|C|=\delta$, then $J$ and hence $G$ are  subgraphs of $H(n,k,\delta)$.
Let $r=\delta-1$ and $|C|=2$.
Note that $J$ is 2-connected with $c(J)<k$.
Each path starting from $C$ ending at $C$ is on at most $\delta+1$ vertices.
Since $\delta(J)\geq\delta$, it follows from a result of Erd\H{o}s and Gallai \cite{erdHos1959maximal} (a minimum degree version of Theorem~\ref{Fan theorem}) that $J-A\cup B\cup C$ is a union of copies of $K_{\delta-1}$.
Thus $J$ and hence $G$ are subgraphs of $Z(n,k,\delta)$, i.e., the union of a clique $K_{k-\delta}$ and some cliques $K_{\delta+1}$'s, where any two cliques share the same two vertices.

Now we suppose that $J+x_1x_m$ contains a copy of $\mathcal{K}_{k,1}$, then it is easy (as in the proof of Theorem~\ref{Theorem 2-connected}) to find a path on at least $k$ vertices starting from $K$ and ending at $J-K$.
The result follows by applying the previous argument. \end{proof}

Now we are ready to deduce Theorem~\ref{corollary 2-connected large n} from Theorem~\ref{Theorem 2-connected}.

\medskip

\noindent{\bf Proof of Theorem~\ref{corollary 2-connected large n}.}
Let $2\leq s\leq \max\{2, \ell-1\}$ and $G$ be an $n$-vertex 2-connected {\it maximal} (in the sense that adding any edge will create a cycle of length at least $k$) graph with $c(G)<k$ and $N_s(G)> h_s(n,k,\ell-1)$.
Note that if $G$ satisfies the conclusion of Theorem~\ref{corollary 2-connected large n}, then any subgraph of $G$ also satisfies the conclusion of Theorem~\ref{corollary 2-connected large n}.
Thus, it is suffices to prove Theorem~\ref{corollary 2-connected large n} for the maximal graph $G$.
Similarly as  the proof of Corollary~\ref{corollary 2-connected}, we have $|V(H(G,\ell-1)|\geq k-\ell+1$.
Since $G$ is 2-connected with $c(G)<k$, we have $\omega(G)\leq  k-2$.
For $\ell-1\geq 2$, i.e., $k\geq7$ and $\ell\geq 3$, basic calculations show  that there is a constant $n_0$ such that if $n>n_0$, then $h_s(n,k,\ell-1)\geq h_s(n,k,2)$.
Let $n\geq n_0$ be sufficiently large and $m\geq k$.
Combining the above arguments and applying Theorem~\ref{Theorem 2-connected} with $\alpha=1$ and $\beta=2$ when $\ell\geq 3$, and with $\alpha=1$ and $\beta=1$ when $\ell=2$, $G$ contains a copy of $F\in \mathcal{F}(m,k,r)$ with $r\in \{1,\ell-1,\ell\}$ or $F\in \{F_2(k,k,2),F_5(k,k,2),F_0(10,10,2),F_4(11,10,2),F(\ell)\}$.
If $F\notin\mathcal{F}(m,k,r)$ with $r=\ell-1,\ell$, then similarly as the proof of Proposition~\ref{property for H}$(\romannumeral4)$, it is easy to check that there is an $X\subseteq V(G)$ of order at most $\ell$ such that $G-X$ is a star forest.
Moreover, if $k$ is odd, then $G-X$ is an independent set and each vertex of it can only be adjacent to $X$, where $X \subseteq V(F)$ is of size $\ell$.

Assume that $G$ contains a copy of $F\in \mathcal{F}(m,k,r)$ for $r\in\{\ell-1,\ell\}$.
For $\ell\leq 3$, again, the result follows similarly as the proof of Proposition~\ref{property for H}$(\romannumeral4)$.
Furthermore, if $k=5$, then there is an $X\subseteq V(G)$ of order two such that $G-X$ is an independent set;
if $k=7$, then there is an $X\subseteq V(G)$ of order two such that $G-X$ is a star forest.
Let $\ell\geq 5$ when $s=3$ and $\ell\geq 4$ otherwise. We need the following fact that

$$
(\ell-1){\ell-1 \choose s-1}\geq {\ell+1 \choose s}-{2 \choose s} \mbox{ holds  for } 2\leq s\leq \ell-1.
$$

To see this, first let $s=2$. Since $\ell\geq 4$, we have $(\ell-1)^2\geq {\ell+1 \choose s}-1.$
Now let $3\leq s\leq \ell-1.$ Then it is enough to show that $s(\ell-1)(\ell-s+1)\geq (\ell+1)\ell$. It is easy to see that $s(\ell-1)(\ell-s+1)$ attains its minimum when $s=3$ or $s=\ell-1$.
If $s=3$, then by $\ell\geq 5$, we have $3(\ell-1)(\ell-2)\geq (\ell+1)\ell$. If $s=\ell-1>3$, then $\ell\geq 5$,
and hence we have $2(\ell-1)^2> (\ell+1)\ell$, proving this fact.

Let $n\geq n_0+t\ell-t$, where $t$ is a large constant. Since $G$  contain a copy of $F\in \mathcal{F}(m,k,r)$ for $r\in\{\ell-1,\ell\}$.
In both cases, $F$ contains a clique with size $\ell-1$ such that after deleting the vertices of it, the resulting graph $G_1$ is 2-connected.
Moreover, each vertex of the clique is only adjacent to two vertices of $F$ ($C$ when $F\in \mathcal{F}(k,k,\ell-1)$, and $w,w_1$ or $w,w_2$ when $F\in \mathcal{F}(k,k,\ell)$).
Thus, by $s\leq\ell-1$ and the above fact we have
$$N_s(G_1)\geq h_s(n,k,\ell-1)- \left({\ell+1 \choose s}-{2 \choose s}\right)>h_s(n-\ell+1,k,\ell-1).$$
Repeat this progress $t-1$ times. Since $t$ is sufficiently large, we have
$$N_s(G_{t})\geq h_s(n,k,\ell-1)-t\left({\ell+1 \choose s}-{2 \choose s}\right)> N_s(K_{n_0+\ell-1}),$$
a contradiction.  The proof of Theorem~\ref{corollary 2-connected large n} now is complete. \hfill $\square$\par\medskip

We remark that for the case $s=3$ and $k\in \{9,10\}$ in the conclusion of Theorem~\ref{corollary 2-connected large n},
one can obtain a refined structural description by deleting at most four vertices so that the resulting graph is very close to a star forest
(there may be some triangles in the resulting graph). 

Next, we show how to use the above results to deduce some of the main results in \cite{Furedi2016,Furedi2018,MN2019} (in equivalent forms).
We need the following lemma.

\begin{lemma}\label{bound the edges of F large s}
For $n\geq k\geq 9$, let $G$ be an $n$-vertex 2-connected graph with $c(G)<k$.
If $G$ contains a copy of $F\in\mathcal{F}(m,k,r)$ with $r\in\{\ell-1,\ell\}$, then
$e(G)\leq h_2(n,k,\ell-1)$.
\end{lemma}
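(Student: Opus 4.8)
The plan is to determine the structure of $G$ using $2$-connectivity and the hypothesis $c(G)<k$, reducing the problem to an edge count that is then compared with $h_2(n,k,\ell-1)$ by an elementary (though somewhat tight) calculation; note that $k\ge 9$ forces $\ell\ge 4$, which is used in the final comparison.

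First I would record, for $r\in\{\ell-1,\ell\}$, the analogues of Proposition~\ref{property for H}. For odd $k$ the family $\mathcal F(m,k,\ell-1)$ consists of Type~\1 graphs, for even $k$ of Type~\2 and Type~\3 graphs; in all of these $F[A]$ and $F[B]$ are (essentially) cliques on $\ell-1$ vertices, $|C|=2$, and $F[C\cup D]$ is a $C$-path, while $\mathcal F(m,k,\ell)$ consists of the Type~\4 graphs, where $F[C]$ induces a cycle. In every case $|V(F)|=m=O_\ell(1)$, since $c(F)<k$ together with the two $(\ell-1)$-cliques bounds $|C|$ and $|D|$ (for Type~\4 one sees $|C|=O_\ell(1)$ by bounding the arcs of $F[C]$ between $w,w_1,w_2$). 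As in the proof of Proposition~\ref{property for H}, and in fact more easily since $A$ and $B$ are now large, one checks the analogues of parts (i)--(iv): through each edge of $F$ there is a cycle on at least $k-2$ vertices, adding any non-edge inside $A\cup B\cup D$ or between $A\cup B\cup D$ and $C$ creates a cycle on at least $k-1$ vertices, and consequently every outside vertex attaches to $V(F)$ in a controlled way. Two facts do most of the work. First, $G$ has no $A$--$B$ edge: an $A$--$B$ edge together with a long path through $A$, the $D$-path, and a long path through $B$ yields a cycle on at least $2\ell+|D|\ge k$ vertices (here $m\ge k$ is used to make $|D|$ large enough), and similarly $G[C\cup D]$ stays sparse, so $e(G[V(F)])$ is at most the contribution of the two $(\ell-1)$-cliques plus $O_\ell(1)$. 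Second, every outside vertex is adjacent to at most $\ell-1$ vertices of $V(F)$, apart from $O_\ell(1)$ exceptional ones (which can be absorbed into a clique $A\cup C$ or $B\cup C$, or are the special vertices $x,y,u_1,w$); this again uses that combining a rich attachment to $A\cup C$ with several outside vertices adjacent to $A$ would produce an alternating cycle on at least $2\ell+|D|\ge k$ vertices. The remaining outside vertices lie on pendant stars attached to $C$, contributing only $O_\ell(1)$ internal edges.

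Putting this together, $e(G)\le (\ell-1)(n-m)+e(G[V(F)])+O_\ell(1)$, with $e(G[V(F)])\le 2\binom{\ell-1}{2}+O_\ell(1)$ while $(\ell-1)m\ge (\ell-1)\cdot 2\ell$. Since $h_2(n,k,\ell-1)=(\ell-1)(n-k+\ell-1)+\binom{k-\ell+1}{2}$, the difference $h_2(n,k,\ell-1)-e(G)$ reduces to a positive quadratic in $\ell$ for $\ell\ge4$; one checks this directly, using that the $\binom{k-\ell+1}{2}$ term beats the small error terms.

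The main obstacle is the structural step: it has to be carried out for all of Types~\1--\4 and their exceptional members, and the $|C|=2$ cases are the delicate ones, since there a $C$--$C$ path inside $F$ has only about $\ell+1$ vertices, so both the no-$A$--$B$-edge statement and the bound on the degrees of outside vertices must be squeezed out of the interaction between the cliques $A$, $B$, the set $C$, and the $D$-path rather than from long $C$--$C$ paths alone.
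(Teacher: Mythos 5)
Your proposal takes a genuinely different route from the paper. The paper does not reprove the structural analysis of Proposition~\ref{property for H} for $r\in\{\ell-1,\ell\}$; instead it observes that the longest path in $G$ from $c_1$ to $c_2$ (for Types \1--\3 with $C=\{c_1,c_2\}$) is bounded by roughly $\ell+1$ vertices, because of the two $K_{\ell-1}$'s hanging off $C$, and then invokes Fan's theorem (Theorem~\ref{Fan theorem}) to bound $e(G)$ directly. For Type~\4 it splits $G$ along $\{w,w_1,w_2\}$ and applies Fan's theorem to the two pieces. The case $k=10$ is treated by an explicit ad hoc decomposition of $G-\{c_1,c_2\}$ into components, since Fan's bound alone is too weak there. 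The black-box use of Fan's theorem is precisely what lets the paper avoid the detailed attachment analysis you are attempting.

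There are genuine gaps in your structural step, and they are not cosmetic. Your central claim that ``every outside vertex is adjacent to at most $\ell-1$ vertices of $V(F)$, apart from $O_\ell(1)$ exceptional ones'' is not established, and in the form stated it is false: an outside vertex can be adjacent to all of $A\cup\{c_1\}$, which is $\ell$ vertices, without creating a cycle of length $k$. What is true (and nontrivial to prove) is that at most a constant number of outside vertices can have two or more neighbours in $A$ (since two such vertices can be interleaved into the cycle $c_1$--$A$--$c_2$--$B$--$c_1$ to obtain a cycle on $2\ell+2$ vertices); a careful case analysis of this kind is exactly the content that Fan's theorem encapsulates, and you have not carried it out. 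In addition you do not address the $k=10$ case, which the paper isolates because the generic estimate $\tfrac{(\ell-1)(n-2)}{2}+2n-3$ is not below $h_2(n,10,3)$ for $\ell=4$, and your treatment of Type~\4 (the only case where $|C|>2$) is a single parenthetical remark rather than an argument. The final step, ``the difference $h_2(n,k,\ell-1)-e(G)$ reduces to a positive quadratic in $\ell$,'' depends on an edge count you have not actually pinned down, since the $O_\ell(1)$ error terms carry implicit constants that compete with the slack $\binom{k-\ell+1}{2}$. In short, the outline may be salvageable, but as written it replaces a single application of Fan's theorem with a substantial and unfinished structural classification.
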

\begin{proof} Let $F\in\mathcal{F}(m,k,\ell-1)$ and  $C=\{c_1,c_2\}$.
Let $k=2\ell+1$ be odd.
Then it is easy to see that the longest path starting from $c_1$ ending at $c_2$ is on at most $\ell+1$ vertices.
Since $\ell\geq 4$ and $n\geq k$, by Theorem~\ref{Fan theorem},

\begin{align*}
e(G)\leq \frac{(\ell-2)(n-2)}{2}+2n-3<{\ell+2 \choose 2}+(\ell-1)(n-\ell-2)=h_2(n,k,\ell-1),
\end{align*}
as desired.
Let $k=2\ell+2$ be even and $\ell\geq 5$.
Note that the longest path starting from $c_1$ ending at $c_2$ in $G$ is on $\ell+2$ vertices (if there is a path starting from $c_1$ ending at $c_2$ in $G$ on $\ell+3$ vertices, then one may easily check that $c(G)\geq k$ by $G$ contains a copy of $F$, a contradiction).
Then by Theorem~\ref{Fan theorem} and $\ell\geq 5$, we have
\begin{align*}
e(G)\leq \frac{(\ell-1)(n-2)}{2}+2n-3<{\ell+3 \choose 2}+(\ell-1)(n-\ell-3)=h_2(n,k,\ell-1).
\end{align*}
Thus we may suppose $k=10$. If the longest path starting from $c_1$ and ending in $c_2$ has at most five vertices, then  by Theorem~\ref{Fan theorem}, we have
$$e(G)\leq \frac{2(n-2)}{2}+2n-3< 3n-3=h_2(n,10,3).$$
Thus we may assume that there is a longest path $P=c_1x_1x_2x_3x_4c_2$ in $G$. Let $G^\prime= G-\{c_1,c_2\}$. Let $X$ be the component of $G^\prime$ contains $\{x_1,x_2,x_3,x_4\}$. Let $X^\prime=X\setminus \{x_1,x_2,x_3,x_4\}$.
Let $C$ be a component of $G[X^\prime]$ and $P^\ast=y_1y_2\ldots y_s$ be a longest path in $C$ such that $y_1$ and $y_s$ are adjacent to distinct vertices of $P$ respectively.
First we show that $s\leq 3$.
Assume that $s\geq 4$.
Since the longest path starting from $c_1$ and ending in $c_2$ has at most six  vertices, $y_1$ and  $y_s$ are adjacent to $c_1$ and $c_2$, respectively.
Note that $P^\ast$ is connected to $\{x_1,x_2,x_3,x_4\}$.
We can easily find a path starting from $c_1$ and ending in $c_2$ has at least seven vertices, a contradiction.
If $s=1$, then $C$ is an isolated vertex.
Hence, the number of edges incident with $C$ in $G$ is at most three.
If $s=2$, then $y_1$ and $y_2$ is adjacent to two vertices of $P^\ast$ with distant at least three  respectively.
As the proof Proposition~\ref{property for H}$(\romannumeral4)$, we can show that $C$ is a star and hence the number of edges incident with $C$ in $G$ is at most $2|C|$.
At last, let $s=3$.
Tedious analysis shows that $C$ is $K_{2,|C|-2}$, a star or a triangle.
In all of the above cases, the number of edges incident with $C$ in $G$ is at most $3|C|$.

For any other component $Y$ of $G^\prime$, since $c(G)<10$, by Theorem~\ref{Fan theorem}, the number edges incident with it is at most $3|Y|$.
Summing all the above edges, we have $e(G)\leq {6 \choose 2}+3(n-6)=3n-3= h_2(n,10,3)$. We finish the proof when $F\in\mathcal{F}(m,k,\ell-1)$.

Let $F\in\mathcal{F}(m,k,\ell)$. Then $k$ is even and $n\geq 10$.
Let $w,w_1$ and $w_2$ be the vertices of $F$ as in Section~\ref{notation}.
Since $c(G)<k$, the longest path starting from $w_1$ or $w_2$ through $G-\{w,w_1,w_2\}$ ending at $w$ is on at most $\ell+1$ vertices and each component of $G-\{w,w_1,w_2\}$ can only be adjacent to $w_1,w$ or $w_2,w$. Let $G_i$ be the induced subgraph of $G$ containing $\{w,w_i\}$ and all components of $G-\{w,w_1,w_2\}$ which is adjacent to $w_i$ for  $i=1,2$. Let $n_1=|V(G_1)|$ and $n_2=|V(G_2)|$. Then $n=n_1+n_2-1$. Since $n\geq 10$ and $\ell\geq 4$, by Theorem~\ref{Fan theorem}, we have
\begin{align*}
e(G)&= e(G_1)+e(G_2)+1\leq \frac{(\ell-2)(n_1-2)}{2}+2n_1-3+\frac{(\ell-2)(n_2-2)}{2}+2n_2-3+1\\
&= \frac{(\ell-2)(n-3)}{2}+2(n+1)-3-2<{\ell+3 \choose 2}+(\ell-1)(n-\ell-3)=h_2(n,k,\ell-1).
\end{align*}
This finishes the proof of the lemma.\end{proof}

Now we have the following immediate corollary, which can imply some of the main results in \cite{Furedi2016,Furedi2018,MN2019}.
\begin{corollary}\label{corollary 2-connected 1}
Let $G$ be an $n$-vertex 2-connected graph with $c(G)<k$ and minimum degree $\delta(G)=\delta$. Let $k\geq 9$ and $\ell-1\geq \delta+1$. If
\begin{equation*}
e(G)>\max\{h_2(n,k,\ell-1),h_2(n,k,\delta+1)\},
\end{equation*}
then one of the following holds:\\
$(a)$ $G$ contains a copy of $F\in \mathcal{F}(k,k,1)\cup \{F_2(k,k,2),F_5(k,k,2),F_0(10,10,2),F_4(11,10,2),F(\ell)\}$;\\
$(b)$ $G$ is a subgraph of $Z(n,k,\delta)$;\\
$(c)$ $G$ is a subgraph of $H(n,k,\delta)$.
\end{corollary}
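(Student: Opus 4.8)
The plan is to deduce this statement directly from Corollary~\ref{corollary 2-connected} (applied with $s=2$) together with Lemma~\ref{bound the edges of F large s}. Since $N_2(G)=e(G)$, the hypothesis $e(G)>\max\{h_2(n,k,\ell-1),h_2(n,k,\delta+1)\}$ is precisely hypothesis (\ref{eq for corollary}) of Corollary~\ref{corollary 2-connected} in the case $s=2$, and the conditions $k\geq 9$, $\ell-1\geq\delta+1$, and $\delta(G)=\delta\geq 2$ are exactly those required there. Hence one of the three alternatives $(a)$, $(b)$, $(c)$ of Corollary~\ref{corollary 2-connected} must hold. Alternatives $(b)$ and $(c)$ there are verbatim the alternatives $(b)$ and $(c)$ of the present statement, so it remains only to treat the case in which alternative $(a)$ of Corollary~\ref{corollary 2-connected} occurs.

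In that case $G$ contains a copy of one of the following: $F\in\mathcal{F}(m,k,r)$ with $r\in\{1,\ell-1,\ell\}$; $F_0(10,10,2)$ or $F_4(11,10,2)$ when $k=10$; $F_2(m,k,2)$ or $F_5(m,k,2)$ when $k$ is even; or $F(\ell)$ when $k$ is even. The key step is to rule out $r\in\{\ell-1,\ell\}$. If $G$ contained a copy of some $F\in\mathcal{F}(m,k,r)$ with $r\in\{\ell-1,\ell\}$, then, since $G$ is $2$-connected with $c(G)<k$ and $n\geq k\geq 9$, Lemma~\ref{bound the edges of F large s} would give $e(G)\leq h_2(n,k,\ell-1)$, contradicting the hypothesis $e(G)>h_2(n,k,\ell-1)$. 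Therefore necessarily $r=1$, and $G$ contains a copy of some $F\in\mathcal{F}(m,k,1)$ (or one of the remaining special graphs).

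Finally I would clean up the bookkeeping of which special graphs can appear. As noted just after the definition of $\mathcal{F}(m,k,r)$, the graph $F_4(k+1,k,\ell-2)$ is the only member of $\mathcal{F}(m,k,r)$ with $m>k$ and $r\leq\ell-2$; since $k\geq 9$ forces $\ell\geq 4$, we have $1\leq\ell-2$ and $2\leq\ell-2$, so $\mathcal{F}(m,k,1)=\mathcal{F}(k,k,1)$, $F_2(m,k,2)=F_2(k,k,2)$, and $F_5(m,k,2)=F_5(k,k,2)$. Consequently the copy found in $G$ lies in $\mathcal{F}(k,k,1)\cup\{F_2(k,k,2),F_5(k,k,2),F_0(10,10,2),F_4(11,10,2),F(\ell)\}$, which is exactly alternative $(a)$ of the present corollary. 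The only point requiring care is keeping the list of admissible special graphs consistent between the two statements; there is no genuinely new difficulty here, as the substantive work has already been carried out in Theorem~\ref{Theorem 2-connected}, Corollary~\ref{corollary 2-connected}, and Lemma~\ref{bound the edges of F large s}.
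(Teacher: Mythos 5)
Your proposal is correct and takes essentially the same route as the paper: the paper proves this corollary by exactly the same one-line reduction to Corollary~\ref{corollary 2-connected} with $s=2$ combined with Lemma~\ref{bound the edges of F large s}. The only difference is that you spell out the elimination of the $r\in\{\ell-1,\ell\}$ cases and the bookkeeping showing $\mathcal{F}(m,k,1)=\mathcal{F}(k,k,1)$ etc.\ when $m>k$, details which the paper leaves implicit.
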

\begin{proof}  This result follows directly from Corollary~\ref{corollary 2-connected} with $s=2$ and Lemma~\ref{bound the edges of F large s}.\end{proof}

We would like to briefly explain how this corollary can imply the main result in \cite{MN2019} (i.e., Theorem~1.7).
In our setting, Theorem~1.7 of \cite{MN2019} states that under the same conditions, if in addition $k\geq 11$,
then one of the following four cases holds: $(1)$ $G$ is a subgraph of $H(n,k,\delta)$;
$(2)$ $G$ is a subgraph of $H(n,k,\ell)$;
$(3)$ if $\delta=2$ and $k$ is even, then $G$ is a subgraph of a member of two well-characterized families of graphs;
$(4)$ if $\delta\geq 3$, then $G$ is a subgraph of $Z(n,k,\delta)$.
It is clear that $(c)$ and $(b)$ in Corollary~\ref{corollary 2-connected} correspond to the above $(1)$ and $(4)$, respectively.
Without referring to uncomplicated details, we point out that if $G$ contains a copy of $F\in \mathcal{F}(k,k,1)\cup \{F_2(k,k,2),F_5(k,k,2),F(\ell)\}$, then this would lead to the above $(2)$ and $(3)$.\footnote{Here, as $k\geq 11,$ the copy $F$ cannot be $F_0(10,10,2)$ or $F_4(11,10,2)$.}


We also can extend our results to connected graphs without paths of a given length.
Let $G$ be an $n$-vertex connected graph without containing a path of length $k-2$.
Let $G^\ast$ be the graph obtained from $G$ by adding a new vertex $v$ and joining $v$ to all vertices of $G$.
Then $G^\ast$ is an $(n+1)$-vertex 2-connected graph containing no cycle of length at least $k$.
Now using a similar argument as in \cite{Luo2018} (i.e., consider the unlabeled $s$-cliques without containing $v$),
one can prove the following result as an analogous path version of Theorem~\ref{corollary 2-connected large n}.

\begin{corollary}
Let $k\geq 5$, $2\leq s\leq \max\{2, \ell-2\}$ and $n\geq n_0(\ell)$, where $n_0(\ell)$ is a large constant depending on $\ell$.
Let $G$ be an $n$-vertex connected graph without containing path of length $k-2$.
Then $N_s(G)\leq h_s(n,k-2,\ell-2)$ unless
\begin{itemize}
\item [(a)] $s=3$ and $k\in \{9,10\}$,
\item [(b)] $k=2\ell+1$, $k\neq 7$, and $G\subseteq H(n,k-2,\ell-1)$, or
\item [(c)] $k=2\ell+2$ or $k=7$, and $G-A$ is a star forest for some $A\subseteq V(G)$ of size at most $\ell-1$.
\end{itemize}
\end{corollary}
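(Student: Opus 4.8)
The plan is to reduce this connected, no‑long‑path statement to the already‑proved $2$‑connected, no‑long‑cycle statements by adding a universal vertex, and then transport the structural conclusions back down. Given an $n$‑vertex connected graph $G$ containing no path of length $k-2$ (equivalently, no $P_{k-1}$), form $G^*$ from $G$ by adding a new vertex $v$ joined to every vertex of $G$. Then $G^*$ is $2$‑connected, has $n+1$ vertices, $\delta(G^*)\ge 2$, and $c(G^*)<k$: a longest cycle of $G^*$ avoiding $v$ is a cycle of $G$ on the same number of vertices, while a longest cycle through $v$ gives a path of $G$ on one fewer vertex, so a cycle of length $\ge k$ in $G^*$ would produce a $P_{k-1}$ in $G$. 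Since all three conclusions of the corollary pass to subgraphs, I may assume $G$ is edge‑maximal subject to having no $P_{k-1}$.

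Two bookkeeping facts make the reduction tight. First, writing $\ell'=\lfloor((k-2)-1)/2\rfloor=\ell-1$, one checks directly that $H(n,k-2,a)+v=H(n+1,k,a+1)$ with the new universal vertex $v$ playing the role of a vertex of the clique‑side $A$; taking $a=\ell-2$ identifies the conjectured extremal graph $H(n,k-2,\ell-2)$ for the path problem with the extremal graph $H(n+1,k,\ell-1)$ for the cycle problem, and $a=\ell-1$ gives $H(n,k-2,\ell-1)+v=H(n+1,k,\ell)$. Second, every $s$‑clique of $G^*$ either avoids $v$ — these are exactly the $s$‑cliques of $G$ — or contains $v$ — these correspond to the $(s-1)$‑cliques of $G$; hence $N_s(G^*)=N_s(G)+N_{s-1}(G)$, and a short computation yields $h_s(n+1,k,\ell-1)=h_s(n,k-2,\ell-2)+h_{s-1}(n,k-2,\ell-2)$ (with $h_1$ read as the vertex count $n$). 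Because $v$ is universal in $G^*$, in each structural configuration output by the cycle results $v$ is forced onto the ``dense side'': if $G^*\subseteq H(n+1,k,\ell)$ then, the independent part being nonempty for $n$ large, $v\in A$, so $G=G^*-v\subseteq H(n,k-2,\ell-1)$; and if $G^*-A^*$ is a star forest with $|A^*|\le\ell$, then either $v\in A^*$ and $G-(A^*\setminus\{v\})$ is a star forest with $|A^*\setminus\{v\}|\le\ell-1$, or $v\notin A^*$, in which case $v$ is a universal vertex of the star forest $G^*-A^*$, forcing $G-A^*$ to be an independent set, and deleting any one vertex of $A^*$ leaves a star forest on at most $\ell-1$ deleted vertices. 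These are precisely cases (b) and (c) of the corollary, the parity conditions ($k=2\ell+1$, $k\neq 7$, resp.\ $k=2\ell+2$ or $k=7$) being unchanged since they depend only on $k$.

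With this in hand I would argue: if $s=3$ and $k\in\{9,10\}$ we are in case (a) and done, so assume otherwise; then $G^*$ is not in case (a) of Theorem~\ref{corollary 2-connected large n} either, and I may also assume $G$ — hence $G^*$ — is in neither (b) nor (c). Thus Theorem~\ref{corollary 2-connected large n} applied to $G^*$ gives $N_s(G^*)\le h_s(n+1,k,\ell-1)$. For $s=2$ this is immediate: $N_2(G)=N_2(G^*)-n\le h_2(n+1,k,\ell-1)-n=h_2(n,k-2,\ell-2)$, since $v$ contributes exactly $n$ edges. For $s\ge 3$ (so $\ell\ge 5$) the identity $N_s(G)=N_s(G^*)-N_{s-1}(G)$ yields the desired bound only once one knows $N_{s-1}(G)\ge h_{s-1}(n,k-2,\ell-2)$; I would establish this by feeding $G^*$ into the finer statements — Corollary~\ref{corollary 2-connected} with $\delta=2$, or Theorem~\ref{Theorem 2-connected} with $\alpha=1$, $\beta=2$ — which, for $n\ge n_0(\ell)$ and $N_s(G)>h_s(n,k-2,\ell-2)$, force $G^*$ to be a subgraph of $H(n+1,k,\ell-1)$, a subgraph of $Z(n+1,k,2)$, or to contain some $F\in\mathcal{F}(m,k,r)$ with $r\in\{\ell-1,\ell\}$, configurations in which $N_{s-1}(G)$ can be computed and is at least $h_{s-1}(n,k-2,\ell-2)$; the remaining members of $\mathcal{K}_{k,1}$ (containing some $F\in\mathcal{F}(m,k,r)$ with $r\le\ell-2$, or $F_2,F_5,F(\ell),F_0,F_4$) have, by Lemma~\ref{bound the edges of F small s}, too few $s$‑cliques to satisfy $N_s(G)>h_s(n,k-2,\ell-2)$, and are thus excluded. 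The refined descriptions in (b) (odd $k\neq 7$) and (c) ($k$ even or $k=7$), and the small‑$\ell$ subcases, are handled by running on $G^*$ the same analysis as in the proof of Theorem~\ref{corollary 2-connected large n} and deleting $v$.

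I expect the main obstacle to be exactly this $s\ge 3$ step. For $s=2$ the number of edges of $G$ at $v$ is automatically $n=h_1(n,k-2,\ell-2)$, so one simply subtracts; for larger $s$ the count $N_{s-1}(G)$ need not be at least $h_{s-1}(n,k-2,\ell-2)$ a priori, so the clean subtraction fails and one must instead pin down the structure of $G$ via the machinery above and verify, case by case, a ``$v$‑deleted'' analogue of Lemmas~\ref{bound the edges of F small s} and~\ref{bound the edges of F large s}. Beyond that the argument is routine: tracking which part of each extremal configuration the universal vertex $v$ occupies, and observing that removing it shifts $k\mapsto k-2$, the clique parameter $\ell-1\mapsto\ell-2$, and the deleted‑set bound $\ell\mapsto\ell-1$.
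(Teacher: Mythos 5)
Your core reduction --- add a universal vertex $v$ to turn a connected graph with no $P_{k-1}$ into a $2$-connected graph with no cycle of length $\geq k$ --- is exactly the paper's approach, and your bookkeeping (the identification $H(n,k-2,a)+v=H(n+1,k,a+1)$, the split $N_s(G^*)=N_s(G)+N_{s-1}(G)$, and the identity $h_s(n+1,k,\ell-1)=h_s(n,k-2,\ell-2)+h_{s-1}(n,k-2,\ell-2)$) is correct, as is the transport of conclusions (b), (c) back down through the deletion of $v$.

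The middle step, where you try to establish $N_{s-1}(G)\geq h_{s-1}(n,k-2,\ell-2)$ by feeding $G^*$ into Corollary~\ref{corollary 2-connected} or Theorem~\ref{Theorem 2-connected}, is circular: those results require $N_s(G^*)>h_s(n+1,k,\ell-1)$, and by your own identity that hypothesis is equivalent to $N_s(G)+N_{s-1}(G)>h_s(n,k-2,\ell-2)+h_{s-1}(n,k-2,\ell-2)$, which (given only $N_s(G)>h_s(n,k-2,\ell-2)$) presupposes the very lower bound on $N_{s-1}(G)$ you are trying to prove. So the black-box subtraction cannot be salvaged this way, and the contrapositive chain does not close.

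Your closing paragraph does land on the correct repair, which is also the paper's intent (following Luo's technique cited there): do not use Theorem~\ref{corollary 2-connected large n} as a black box. Instead rerun the disintegration and counting arguments on $G^*$, tracking $v$ throughout. Since $v$ has degree $n$ it survives every $(\ell-1)$-disintegration step, so it sits inside the core $H(G^*,\ell-1)$; the clique-counting in Lemma~\ref{bound the edges of F small s} and in the last stage of Theorem~\ref{Theorem 2-connected} can then be restricted to $s$-cliques avoiding $v$, and those restricted counts come out to $h_s(n,k-2,\ell-2)$ directly. Keeping your setup and your transport of (b)/(c), dropping the circular subtraction, and carrying out the ``$v$-deleted'' analogues of the counting lemmas that you mention at the end would give essentially the paper's proof.
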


To conclude this paper, we would like to propose the following conjecture.
This (if true) would give a strengthening of Theorem \ref{corollary 2-connected large n}
(to cover all ranges of $n$ similarly as in Corollary~\ref{corollary 2-connected 1}).

\begin{conjecture}
Let $G$ be a $2$-connected graph on $n$ vertices and let $ab$ be an edge in $G$.
Let $r\geq 4$ and $s\geq 2$ be integers, and let $n-2=x(r-3)+t$ for some $0\leq t\leq r-4$.
If $N_s(G)>x {r-1 \choose s}+ {t+2\choose s}$, then there is a cycle on at least $r$ vertices containing the edge $ab$.
\end{conjecture}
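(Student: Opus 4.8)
The plan is to induct on $n$, peeling $G$ along the cut $\{a,b\}$ and reducing to the case where $G-\{a,b\}$ is connected, which we then handle by a Fan-type analysis of a longest $a$--$b$ path. First I would reduce to the case that $G$ is edge-maximal among $2$-connected graphs on $n$ vertices with no cycle of length $\ge r$ through $ab$ (adding edges only increases $N_s$). Since a cycle of length $\ell\ge 3$ through $ab$ corresponds exactly to an $a$--$b$ path on $\ell$ vertices, the hypothesis says that every $a$--$b$ path in $G$ has at most $r-1$ vertices, so Fan's theorem (Theorem~\ref{Fan theorem}) with parameter $r-1$ is available. The base case $n\le r-1$ is immediate: $N_s(G)\le\binom{n}{s}=\binom{(n-2)+2}{s}$, which is the claimed bound with $x=0$, $t=n-2$. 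For the step, write $\hat g(N):=x\binom{r-1}{s}+\binom{t+2}{s}$ for $N=x(r-3)+t$ with $0\le t\le r-4$, so that the goal becomes $N_s(G)\le\hat g(n-2)$.

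If $G-\{a,b\}$ has components $C_1,\dots,C_p$ with $p\ge2$, set $G_i:=G[\{a,b\}\cup C_i]$. Because $ab\in E(G)$, each $G_i$ is $2$-connected, has fewer than $n$ vertices, and inherits the ``no long cycle through $ab$'' property, so by induction $N_s(G_i)\le\hat g(|C_i|)$. Every $s$-clique with $s\ge3$ lies in a unique $G_i$ and $ab$ is the only shared $2$-clique, hence $N_s(G)=\sum_iN_s(G_i)-(p-1)[s=2]$ (Iverson bracket). It then suffices to verify the ``superadditivity up to an $[s=2]$ defect''
\[
\hat g(N_1)+\hat g(N_2)-[s=2]\ \le\ \hat g(N_1+N_2),
\]
and to iterate it over the $p$ parts. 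Expanding the two cases $t_1+t_2\le r-4$ and $t_1+t_2\ge r-3$, this reduces to a Vandermonde-type inequality $\binom{p}{s}+\binom{q}{s}-[s=2]\le\binom{p+q-2}{s}$ and to the majorization $\{t_1+2,t_2+2\}\prec\{r-1,\,t_1+t_2-r+5\}$ combined with convexity of $\binom{\cdot}{s}$ (Karamata); morally this is just the statement that gluing two extremal ``books of $K_{r-1}$'s'' along $ab$ is again extremal.

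The remaining, and main, case is $G-\{a,b\}$ connected with $n\ge r$. Here I would follow Fan's proof of Theorem~\ref{Fan theorem} while tracking $s$-cliques instead of edges: take a longest $a$--$b$ path $P=au_1\cdots u_{m-2}b$ with $m\le r-1$, analyse via Pósa-type rotations (Lemma~\ref{posa lemma}) how the off-$P$ vertices attach, and show that the ``no cycle of length $\ge r$ through $ab$'' hypothesis still forces $G$ to decompose into cliques of order at most $r-1$ glued at cut pairs, possibly plus a ``thin'' part in which every off-$P$ vertex sees at most $r-3$ others within its piece. One then bounds each clique-piece by $\binom{r-1}{s}$ and the thin part by a $\binom{r-3}{s-1}$-per-vertex estimate and checks these aggregate to $\hat g(n-2)$; the key point is that inside a piece the low-degree vertices share neighbourhoods, so an entire $(r-3)$-block costs at most $\binom{r-1}{s}$ rather than the lossy $(r-3)\binom{r-3}{s-1}$, which is precisely why a naive disintegration estimate will not suffice and the structural analysis is unavoidable.

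I expect this last case to be the principal obstacle. Unlike the condition $c(G)<k$ used throughout the paper — which, via the $(\ell-1)$-disintegration, splits structure into a controlled core plus low-degree vertices — forbidding long cycles only through the fixed edge $ab$ is a much weaker hypothesis: a $2$-connected graph can be large, sparse, and far from clique-like while still having only short $a$--$b$ paths (long caterpillar-type attachments hanging off a small centre), so the generalized-book structure must be extracted through the delicate path-rotation bookkeeping of Fan's argument rather than any soft degree count. A secondary difficulty is that $3$-connected instances (with no $\{a,b\}$-cut, indeed no $2$-cut at all) need not be Hamiltonian, so one needs the correct ``cycle through a prescribed edge'' refinement of the Fan/Kopylov-type lemmas to pin them down.
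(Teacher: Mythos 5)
This statement is posed in the paper as an open \emph{conjecture} (it is the closing item of Section~5, offered ``for further research''); the paper contains no proof of it, so there is no house argument to compare your sketch against. What you have written is an honest research plan rather than a proof, and you say so yourself in the last two paragraphs.

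Your reductions are essentially sound. Writing $\hat g(N)=x\binom{r-1}{s}+\binom{t+2}{s}$, the superadditivity $\hat g(N_1)+\hat g(N_2)-[s=2]\le\hat g(N_1+N_2)$ does hold: in the no-carry case it comes down to $\binom{t_1+2}{s}+\binom{t_2+2}{s}-[s=2]\le\binom{t_1+t_2+2}{s}$, which for $s=2$ is the identity-plus-$t_1t_2$ computation and for $s\ge3$ a Vandermonde estimate, while in the carry case $\{t_1+2,\,t_2+2\}$ is indeed majorized by $\{r-1,\,t_1+t_2-r+5\}$ so Karamata applies, and iterating pairwise over $p$ blocks gives the $(p-1)[s=2]$ defect. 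The observation that each $G_i=G[\{a,b\}\cup C_i]$ is $2$-connected because $ab$ is an edge is also correct. One small slip: your stated base case ``$n\le r-1$'' with $x=0$, $t=n-2$ only matches the parametrization for $n\le r-2$ (when $n=r-1$ one has $n-2=r-3$, forcing $x=1,t=0$); the numerical conclusion still holds since $\binom{r-1}{s}\le\binom{r-1}{s}+\binom{2}{s}$, but the bookkeeping should be fixed.

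The genuine gap is exactly where you locate it: the case $G-\{a,b\}$ connected and $n\ge r$. ``Follow Fan's proof while tracking $s$-cliques'' is a direction, not an argument, and you correctly diagnose why a soft replacement fails. The hypothesis forbids long cycles only \emph{through $ab$}, which is far weaker than $c(G)<k$; there is no global degree condition and no disintegration core to lean on, so a per-vertex estimate of the form $(r-3)\binom{r-3}{s-1}$ overshoots the target $\binom{r-1}{s}$ per block, and the needed cancellation requires actually exhibiting the generalized-book structure. Your sketch does not produce that structure, and in particular it gives no handle on $3$-connected instances where there is no $2$-cut to peel. So the proposal, as written, does not prove the conjecture; it isolates the hard case and correctly explains why the obvious reductions run out, which is useful, but the statement remains open both in the paper and in your write-up.
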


This also can be viewed as a clique version of Theorem~\ref{Fan theorem} of Fan \cite{Fan1990}.

\medskip

\bigskip

\noindent {\bf Acknowledgement.}
The authors would like to thank Alexandr Kostochka and Ruth Luo for helpful discussions at early stage of this study and Qingyi Huo for his careful reading on a draft.

\end{document}